\setlist[enumerate,1]{label=\upshape (\roman*),nosep}{}{}{}
\newlist{asparaenum}{enumerate}{1}
\setlist[asparaenum,1]{labelindent=\parindent,leftmargin=0pt,
itemindent=2\parindent,label=\upshape (\roman*)}
\theoremstyle{plain}
\newtheorem{theorem}{Theorem}[section]
\newtheorem{proposition}[theorem]{Proposition}
\newtheorem{lemma}[theorem]{Lemma}
\newtheorem{corollary}[theorem]{Corollary}
\newtheorem{definition}[theorem]{Definition}
\newtheorem{remark}[theorem]{Remark}
\theoremstyle{nonumberplain}
\newtheorem{proof}{Proof}
\numberwithin{equation}{section}
\newcommand{\hor}{{\mathscr H}}
\newcommand{\Hrel}{\mathrel{\sim_{\mkern-8mu\hor}^{}}}
\newcommand{\Hrelated}{\( \hor \)\hyphen related\xspace}
\newcommand{\Lie}[1]{\operatorname{\mathit{#1}}}
\newcommand{\lie}[1]{\operatorname{\mathfrak{#1}}}
\newcommand{\GL}{\Lie{GL}}
\newcommand{\gl}{\lie{gl}}
\newcommand{\ort}{\lie{o}}
\newcommand{\SO}{\Lie{SO}}
\newcommand{\so}{\lie{so}}
\newcommand{\Sp}{\Lie{Sp}}
\newcommand{\sP}{\lie{sp}}
\newcommand{\Un}{\Lie{U}}
\newcommand{\un}{\lie{u}}
\newcommand{\bC}{{\mathbb C}}
\newcommand{\bH}{{\mathbb H}}
\newcommand{\bR}{{\mathbb R}}
\newcommand{\bZ}{{\mathbb Z}}
\newcommand{\ii}{{\mathbf i}}
\newcommand{\jj}{{\mathbf j}}
\newcommand{\Gi}{{\mathbf s}}
\newcommand{\G}{{\mathbf G}}
\newcommand{\tJ}{{\widetilde J}}
\newcommand{\tK}{{\widetilde K}}
\DeclareMathOperator{\CH}{\bC H}
\DeclareMathOperator{\End}{End}
\DeclareMathOperator{\Id}{Id}
\DeclareMathOperator{\im}{Im}
\DeclareMathOperator{\re}{Re}
\DeclareMathOperator{\RH}{\bR H}
\DeclareMathOperator{\diag}{diag}
\DeclareMathOperator{\vol}{vol}
\newcommand{\UM}{\mathscr U}
\newcommand{\symp}{{\sslash}}
\newcommand{\hkq}{{/\mkern-6mu/\mkern-6mu/}}
\newcommand{\hmod}{{\mathrm{mod}}} 
\newcommand{\hook}{{\lrcorner\,}}
\newcommand{\NB}{\nabla}
\newcommand{\NC}{\nabla^{\mathrm{C}}}
\newcommand{\LC}{\NB^{\mathrm{LC}}}
\newcommand{\oLC}{\omega_{\mathrm{LC}}}
\newcommand{\OLC}{\Omega_{\mathrm{LC}}}
\newcommand{\nA}{\eta_{\mathrm{A}}}
\newcommand{\oC}{\omega_{\mathrm{C}}}
\newcommand{\oN}{\omega_\nabla}
\newcommand{\ON}{\Omega_\nabla}
\newcommand{\hX}{\widetilde X}
\newcommand{\oma}[1]{\omega^\alpha_#1}
\newcommand{\omam}[1]{\overline{\omega^\alpha_#1}}
\newcommand{\omb}[1]{\omega^\beta_#1}
\DeclarePairedDelimiter{\degr}{\lvert}{\rvert}
\DeclarePairedDelimiter{\norm}{\lVert}{\rVert}
\DeclarePairedDelimiterX{\inp}[2]{\langle}{\rangle}{#1, #2}
\DeclarePairedDelimiter{\Span}{\langle}{\rangle}
\DeclarePairedDelimiterX{\Set}[2]{\{}{\}}{\, #1 \,\delimsize\vert\, #2 \,}
\newcommand{\eqbreak}[1][2]{\\&\hskip#1em}
\providerobustcmd*{\hyphen}{%
  \nobreak-\nobreak\hskip0pt}
\begin{document}

\thispagestyle{empty}
\begin{flushright}
    \small
\end{flushright}

\bigskip

\begin{center}
  \LARGE\bfseries Twist geometry of the c-map
\end{center}
\begin{center}
  \Large Oscar Macia and Andrew Swann
\end{center}

\begin{abstract}
  We discuss the geometry of the c-map from projective special Kähler
  to quaternionic Kähler manifolds using the twist construction to
  provide a global approach to Hitchin's description.  As found by
  Alexandrov et al.\ and Alekseevsky et al.\ this is related to the
  quaternionic flip of Haydys.  We prove uniqueness statements for
  several steps of the construction.  In particular, we show that
  given a hyperKähler manifold with a rotating symmetry, there is
  essentially only a one parameter degree of freedom in constructing a
  quaternionic Kähler manifold of the same dimension.  We demonstrate
  how examples on group manifolds arise from this picture.
\end{abstract}

\bigskip
\begin{center}
  \begin{minipage}{0.8\linewidth}
    \microtypesetup{protrusion=false}
    \hrule \vspace{\baselineskip}
    {\small \tableofcontents\par}
    \vspace{1.5\baselineskip}\hrule
  \end{minipage}
\end{center}

\bigskip
\noindent
2010 Mathematics Subject Classification: Primary 53C26; Secondary
57S25 53C80

\noindent
Keywords: special Kähler, hyperKähler, quaternionic Kähler, twist.

\section{Introduction}
\label{sec:introduction}

The c-map was introduced in the physics literature by Cecotti, Ferrara
and Girardello \cite{Cecotti-FG:II} and explicit local expressions for
the metrics were provided by Ferrara and Sabharwal \cite{Ferrara-S:q}.
It is a remarkable construction that for certain Kähler manifolds~\( S
\) of dimension~\( 2n \) creates a quaternionic Kähler manifold~\( Q
\) of dimension \( 4n+4 \).  The manifolds \( Q \) are Einstein with
negative scalar curvature and have holonomy contained in the group \(
\Sp(n+1)\Sp(1) \).  The map is derived from a duality of moduli spaces
for type IIA and IIB string theories which takes a product \( S \times
Q' \times \CH(1) \) to a product \( S' \times Q \times \CH(1) \), with
\( Q \) the c-map of \( S \) and \( Q' \) the c-map of \( S' \).

Historically the c-map construction has had particular importance for
the study of homogeneous quaternionic Kähler manifolds.  In
\cite{Cecotti-FG:II}, Alekseevsky's method~\cite{Alekseevsky:solvable}
for classifying completely solvable Lie groups admitting a
quaternionic Kähler metric was used to elucidate the geometric
properties of the c-map for homogeneous spaces.  Later the c-map was
used by de Wit and Van Proeyen \cite{DeWit-vP:special} to show that
the above homogeneous classification was missing one family of spaces,
a derivation of the correct result using Alekseevsky's construction
was then provided by Cortés~\cite{Cortes:Alekseevskian}.  The
significance of these spaces is that they include the only known
examples of homogeneous quaternionic Kähler manifolds that are not
symmetric.

Given the importance of these homogeneous results, it is surprising
that the first mathematical description of the c-map was published by
Hitchin~\cite{Hitchin:quaternionic} in~2009, building on the
description of the relevant Kähler geometries in
Freed~\cite{Freed:special}.  As was well-known, an intermediate step
is the construction of a hyperKähler manifold~\( H \), which is the
rigid c-map of a cone~\( C \) on~\( S \).  It then became apparent
that the passage from the hyperKähler manifold~\( H \) to the
quaternionic Kähler~\( Q \), was a pseudo-Riemannian version of a much
more general construction of Haydys~\cite{Haydys:hKS1}.  This has now
been dubbed the hK/qK correspondence and has been studied both in the
physics literature, for example \cite{Alexandrov-DP:qK-hK}, and
mathematically, for example
\cite{Hitchin:hK-qK,Hitchin:hyperholomorphic}.  In the particular
context of the c-map, Cortés et al.\
\cite{Alekseevsky-CDM:qK-special,Alekseevsky-CM:conification,
Cortes-HX:supergravity} have shown that this correspondence reproduces
the explicit local expressions for the c-map and its one-loop
deformation, first seen from a twistor viewpoint by Alexandrov et al.\
\cite{Alexandrov-DP:qK-hK}, and used it to construct new examples of
complete inhomogeneous quaternionic Kähler manifolds.

Given that the c-map is a manifestation of a general string theory
duality, it is useful to put the above constructions into a wider
context.  In \cite{Swann:T,Swann:twist} a twist construction was
introduced as a geometric interpretation of a broad class of T-duality
constructions.  It was successfully exploited to produce geometries
with torsion and particular examples of hypercomplex structures.
However, it was not apparent how it could produce Riemannian metrics
of special holonomy.  The central purpose of this paper is to show how
to adapt the twist construction so that hyperKähler manifolds with a
rotating circle symmetry may be used to produce quaternionic Kähler
manifolds.  The method to do this involves considering a combination
of a conformal change in the hyperKähler metric together with a
different conformal scaling along quaternionic directions associated
to the symmetry.  We prove that there is essentially only one degree
of freedom if the result is to be quaternionic Kähler.  It immediately
follows that the descriptions of the hK/qK correspondence in
Haydys~\cite{Haydys:hKS1}, Hitchin~\cite{Hitchin:hK-qK} and
Alekseevsky, Cortés and Mohaupt \cite{Alekseevsky-CDM:qK-special}
agree and that the twist construction may be used to describe the
c-map.  We will therefore present the construction of the c-map in
this context, with emphasis on proving uniqueness of the constructions
involved.  We take a uniformly global approach, with the aim of
elucidating the geometric basis for the c-map and illustrating how
information may be extracted from the twist construction.  We consider
metrics of arbitrary signature when appropriate.  In
\cite{Swann:twist-mod} similar constructions are considered for
tri-holomorphic actions on hyperKähler manifolds; in contrast to the
single parameter in the present paper it turns that there is much more
freedom in the way of obtaining twists that are again hyperKähler.

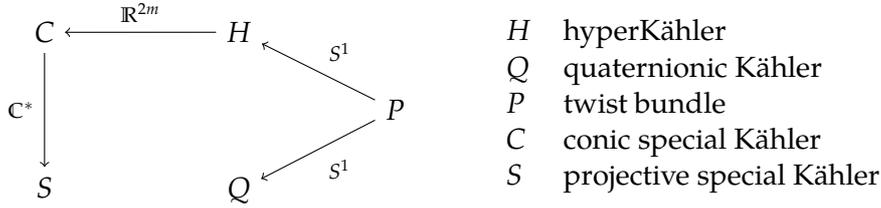
\begin{figure}[tp]
  \begin{tikzpicture}[baseline=0pt,->]
    \matrix[matrix of math nodes,row sep=0.5cm] {
    |(C)| C &[2cm] |(H)| H &[1.5cm]         \\
            &         & |(P)| P \\
    |(S)| S & |(Q)| Q &         \\};
    \begin{scope}[every node/.style={midway,font=\scriptsize}]
      \draw (C) to node[left] {\( \bC^* \)} (S);
      \draw (H) to node[above] {\( \bR^{2m} \)} (C);
      \draw (P) to node[above right] {\( S^1 \)} (H);
      \draw (P) to node[below right] {\( S^1 \)} (Q);
    \end{scope}
  \end{tikzpicture}
  \qquad
  \begin{tabular}[c]{ll}
    \( H \) & hyperKähler          \\
    \( Q \) & quaternionic Kähler  \\
    \( P \) & twist bundle         \\
    \( C \) & conic special Kähler \\
    \( S \) & projective special Kähler
  \end{tabular}
  \caption{Spaces in the c-map}
  \label{fig:spaces}
\end{figure}

In outline the constructions are as follows,
see~Figure~\ref{fig:spaces}.  A projective special Kähler manifold \(
S \) may be defined via a complex reduction of a regular conic special
Kähler manifold~\( C \).  For supergravity the latter has complex
Lorentzian signature.  The rigid c-map defines an indefinite
hyperKähler metric on \( H = T^*C \).  This carries a circle action
and the aim is to produce a quaternionic Kähler manifold \( Q \) of
the same dimension as \( H \).  Hitchin provides a local description,
using the flat special Kähler connection to write \( T^*C = C \times
\bR^{2m} \), putting \( Q = S \times \CH(m+1) \) and adjusting the
metric along quaternionic directions related to the symmetry.
Patching of Hitchin's local construction is described by Cortés, Han
and Mohaupt \cite{Cortes-HX:supergravity}.  However, we will show that
a global picture may be obtained by considering Haydys' quaternionic
flip construction in indefinite signature, as also found by Alexandrov
et al.\ \cite{Alexandrov-DP:qK-hK} and Alekseevsky et al.\
\cite{Alekseevsky-CDM:qK-special}, and that in addition the
constructions may be directly described via the twist construction
of~\cite{Swann:twist}.  The twist construction lifts the circle action
on \( H \) to a principal circle bundle \( P \) and constructs \( Q \)
as the circle quotient of \( P \) by the lifted action.  The main
conclusions are that the c-map for projective special Kähler manifolds
is obtained from the rigid c-map, via general constructions for
hyperKähler manifolds with a rotating circle symmetry, and that these
latter constructions are essentially unique.

\paragraph*{Acknowledgements}
\label{sec:acknowledgements}

We thank Thomas Bruun Madsen, Nigel Hitchin, Vicente Cortés, Christoph
Böhm, Paul Gauduchon and Henrik Pedersen for useful conversations, and
Maxim Kontsevich for an instigating question.  A preliminary version
of this material was presented at the Srní Winter School, 2011, we
thank the organisers for that opportunity.  This work is partially
supported by the Danish Council for Independent Research, Natural
Sciences and by the Spanish Agency for Scientific and Technical
Research (DGICT) and FEDER project MTM2010\hyphen 15444.

\section{The rigid c-map}
\label{sec:rigid-c-map}

The rigid c-map constructs hyperKähler manifolds of dimension~\( 4n \)
from so-called special Kähler manifolds of dimension~\( 2n \).  It was
described mathematically by Freed~\cite{Freed:special}.  Let us
demonstrate this construction using the language of structure bundles
and show how the special Kähler condition arises naturally.  For later
use we will need the case of indefinite metrics.

\begin{definition}
  A \emph{special Kähler} manifold is a Kähler manifold \(
  (M,g,I,\omega) \), with \( g \) a metric of signature \( (2p,2q) \),
  together with a flat, torsion-free, symplectic connection~\( \NB \)
  satisfying \( d^\NB I = 0 \), meaning
  \begin{equation}
    \label{eq:special}
    (\NB_XI)Y = (\NB_YI)X,
  \end{equation}
  for all \( X,Y \in TM \).
\end{definition}

Our conventions are such that \( \omega_I(\cdot,\cdot) =
g(I\cdot,\cdot) \).  When \( M \) is a Kähler manifold, there are
sophisticated constructions of hyperKähler metrics on neighbourhoods
of the zero section of \( T^*M \) due to Feix~\cite{Feix:cotangent} and
Kaledin~\cite{Kaledin:cotangent}.  We will show how the extra conditions
of a special Kähler geometry arise naturally from the desire to have a
simply defined hyperKähler metric on the whole cotangent bundle.

\subsection{HyperKähler structures on flat vector spaces}
\label{sec:hyperk-struct-flat}

Let \( V \) denote \( \bC^{p,q} \) regarded as \( (\bR^{2m},\G,\ii)
\), \( m = p + q \), so that the (indefinite) inner product is \(
\inp{\mathbf v}{\mathbf w} = \mathbf v^T\G\mathbf w \) and \( \ii \)
is the complex structure \( \ii\G = \G\ii \), \( \ii^T = - \ii \).
Concretely, take \( \G = \diag(\Id_{2p},-\Id_{2q}) \) and \( \ii =
\diag(\ii_2,\dots,\ii_2) \), where \( \ii_2 =
\begin{psmallmatrix}
  0 & -1 \\
  1 & 0
\end{psmallmatrix}
\).  The vector space \( H = V + V^* \) is isomorphic to \( \bH^{p,q}
\) and the flat (indefinite) hyperKähler metric may be described as
follows.  Let \( \theta \in \Omega^1(V,\bR^{2m}) \) be the
tautological one-form from the identification \( T_xV \cong V =
\bR^{2m} \).  In other words,
\begin{equation*}
  \theta = (dx_1,dy_1,dx_2,dy_2,\dots,dx_m,dy_m)^T,
\end{equation*}
where \( (x_1,y_1,\dots) \) are the standard coordinates on \(
\bR^{2m} \) and the flat metric on \( V \) is
\begin{equation*}
  g = \sum_{i=1}^p dx_i^2 + dy_i^2 - \sum_{j=p+1}^m dx_j^2 + dy_j^2.
\end{equation*}
The Kähler form on \( V \) is
\begin{equation*}
  \omega =  \sum_{i=1}^p dx_i \wedge dy_i - \sum_{j=p+1}^m dx_j \wedge dy_j =
  - \tfrac12 \theta^T \wedge \Gi \theta,
\end{equation*}
with
\begin{equation*}
  \Gi = \G \ii.
\end{equation*}
Similarly on \( V^* \), let \( \alpha \in \Omega^1(V^*,{\bR^{2m}}^*)
\) be the tautological form:
\begin{equation*}
  \alpha = (du_1,dv_1,du_2,dv_2,\dots,du_m,dv_m),
\end{equation*}
with \( (u_1,v_1,\dots) \) dual coordinates to \( (x_1,y_1,\dots) \).
An induced Kähler form on \( V^* \) is
\begin{equation*}
  \omega^* =   \sum_{i=1}^p du_i\wedge dv_i - \sum_{j=p+1}^m
  du_j\wedge dv_j = - \tfrac12 \alpha \wedge \Gi \alpha^T .
\end{equation*}

Now regarding \( H = V+V^* \) as \( T^*V \) there is a canonical
symplectic form
\begin{equation*}
  \omega_J = \alpha \wedge \theta = \sum_{i=1}^m du_i\wedge dx_i +
  dv_i\wedge dy_i.
\end{equation*}
In addition, using \( \ii \) we may construct
\begin{equation*}
  \omega_K = - \alpha \wedge \ii \theta = \sum_{i=1}^m du_i \wedge dy_i
  - dv_i \wedge dx_i.
\end{equation*}
Together with
\begin{equation*}
  \omega_I = \omega - \omega^* = \tfrac12 ( \alpha \wedge \Gi
  \alpha^T - \theta^T \wedge \Gi \theta ) 
\end{equation*}
we obtain a flat hyperKähler structure with metric \( 
\sum_{i=1}^m \varepsilon_i(dx_i^2 + dy_i^2 + du_i^2 + dv_i^2) \),
where \( \varepsilon_i = \G_{ii} = \pm1 \), and complex structures:
\begin{gather*}
  Idx_i = dy_i,\quad Idu_i = - dv_i, \qquad
  Jdv_i = dy_i,\quad Jdx_i = -du_i, \\
  Kdu_i = dy_i,\quad Kdv_i = -dx_i.
\end{gather*}

\subsection{The cotangent bundle}
\label{sec:cotangent-bundle}

Let us start with a general manifold \( M \) of dimension \( n \).
The bundle \( \GL(M) \) of frames, i.e., linear isomorphisms \(
u\colon\bR^n \to T_aM \), is a principal \( \GL(n,\bR) \)-bundle with
action \( (R_gu)(v) = (u\cdot g)(v) = u(gv) \), for \( g \in
\GL(n,\bR) \) and \( v \in \bR^n \).  It carries a canonical one-form
\( \theta \in \Omega^1(\GL(M),\bR^n) \) given by \( \theta_u(X) =
u^{-1}(\pi_*X) \), where \( \pi\colon \GL(M) \to M \) is the
projection.  This satisfies \( R_g^*\theta = g^{-1}\theta \).  A
connection one-form \( \oN \in \Omega^1(\GL(M),\End(\bR^n)) \) is by
definition a form such that \( R_g^*\oN = g^{-1}\oN g \) and \(
\oN(\xi^*) = \xi \), where \( \xi^* \) is the vector field on \(
\GL(M) \) generated by the infinitesimal action of \( \xi
\in \End(\bR^n) \), the Lie algebra of \( \GL(n,\bR) \).  The
connection is torsion-free if and only if
\begin{equation*}
  d\theta = - \oN \wedge \theta.
\end{equation*}

The cotangent bundle may be constructed as the associated bundle
\begin{gather*}
  \GL(M) \times_{\GL(n,\bR)} (\bR^n)^* = T^*M,\\
  R_g(u,v) = (u\cdot g,vg)\mapsto v\circ u^{-1}.
\end{gather*}
Writing \( x\colon (\bR^n)^* \to (\bR^n)^* \) for the identity map, we
have \( R_g^*x = xg \).  The form
\begin{equation*}
  \alpha = dx - x\oN
\end{equation*}
on \( \GL(M) \times (\bR^n)^* \) agrees with \( dx \) on \( (\bR^n)^*
\) and is zero on vectors tangent to the \( \GL(n,\bR) \)-action.  It
satisfies \( R_g^*\alpha = \alpha g \).  As the kernels of the forms
\( \alpha \) and \( \theta \) are preserved by the group action, these
kernels descend to provide a splitting
\begin{equation}
  \label{eq:V-H}
  T(T^*M) = \mathcal V \oplus \mathcal H,
\end{equation}
with \( \mathcal V = \ker \pi_* \) and \( \mathcal H_{[u,v]} \cong T_{\pi(u)}M \).

The canonical symplectic form on \( T^*M \) is now
\begin{equation*}
  \omega_J = d(x\theta),
\end{equation*}
since \( x\theta \) is the tautological one-form.  Expanding the
right-hand side gives \( \omega_J = dx \wedge \theta + x d\theta \).
This gives
\begin{lemma}
  \( \omega_J = \alpha\wedge\theta \) if and only if \( \oN \) is
  torsion-free.
  \qed
\end{lemma}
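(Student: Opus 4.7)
The plan is to compute both sides of the asserted identity directly on \( \GL(M) \times (\bR^n)^* \) and compare, relying only on the expressions \( \alpha = dx - x\oN \) and \( \omega_J = d(x\theta) \) already established together with the torsion-free characterisation \( d\theta = -\oN\wedge\theta \).

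First I would expand \( \omega_J \) by the Leibniz rule, obtaining \( \omega_J = dx\wedge\theta + x\,d\theta \), where \( x\,d\theta \) denotes the real-valued 2-form gotten by contracting the \( (\bR^n)^* \)-valued coordinate \( x \) with the \( \bR^n \)-valued 2-form \( d\theta \). Next I would expand the right-hand side similarly:
\begin{equation*}
  \alpha\wedge\theta = (dx - x\oN)\wedge\theta = dx\wedge\theta - x(\oN\wedge\theta),
\end{equation*}
where \( \oN\wedge\theta \) is the \( \bR^n \)-valued 2-form obtained by letting the \( \End(\bR^n) \)-valued connection form act on the \( \bR^n \)-valued \( \theta \). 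Subtracting one expression from the other, the common term \( dx\wedge\theta \) cancels and the equality \( \omega_J = \alpha\wedge\theta \) is seen to be equivalent to
\begin{equation*}
  x\bigl(d\theta + \oN\wedge\theta\bigr) = 0
\end{equation*}
as a real-valued 2-form on \( \GL(M) \times (\bR^n)^* \).

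Finally I would observe that \( x \) is the identity map on \( (\bR^n)^* \), so its values cover all of \( (\bR^n)^* \); hence the pairing above vanishes for every \( x \) exactly when the \( \bR^n \)-valued 2-form \( d\theta + \oN\wedge\theta \) vanishes identically on \( \GL(M) \). This is precisely the torsion-free condition \( d\theta = -\oN\wedge\theta \) recalled before the lemma, and both implications follow in a single step.

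There is no real obstacle here; the only subtlety is keeping the type-theoretic bookkeeping straight, i.e., remembering that \( \theta \) is \( \bR^n \)-valued, \( \oN \) is \( \End(\bR^n) \)-valued, \( x \) is \( (\bR^n)^* \)-valued, and that all wedge products are implicitly composed with the obvious contractions so as to produce real-valued forms. Once this is clear, the equivalence is a one-line cancellation.
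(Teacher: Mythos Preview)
Your argument is correct and matches the paper's approach exactly: the paper expands \( \omega_J = d(x\theta) = dx\wedge\theta + x\,d\theta \) just before the lemma and then marks the statement with \qed, leaving the comparison with \( \alpha\wedge\theta = dx\wedge\theta - x(\oN\wedge\theta) \) implicit. Your write-up simply makes that one-line cancellation explicit.
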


Now suppose that \( M \) carries an almost complex structure \( I \)
and that \( n = 2m \).  Consider the bundle \( \GL(\bC,M) \) of \( \bC
\)-linear frames \( u\colon \bC^m=\bR^{2m} \to T_aM \), \( u \circ \ii
= I\circ u\).  If we identify the cotangent bundle with \(
\Lambda^{1,0}M = \GL(\bC,M) \times_{\GL(m,\bC)} (\bC^m)^* \), then we
obtain a canonical non-degenerate closed \( (2,0) \)-form.  Its real
part is the canonical symplectic structure \( \omega_J \) given above,
where \( \theta \) is pull-backed to \( \GL(\bC,M) \).  The imaginary
part is
\begin{equation*}
  \omega_K = -d(x\ii \theta).
\end{equation*}
This expands to \( \omega_K = - dx\ii\wedge \theta - x\ii d\theta =
- (dx\ii - x\ii\oN) \wedge \theta \).  This gives

\begin{lemma}
  Suppose \( \oN \) is torsion free.  We have \( \omega_K = - \alpha
  \wedge \ii \theta \) if and only if the complex structure \( I \) on
  \( M \) is integrable and the pair \( (\NB,I) \) satisfies the
  special condition~\eqref{eq:special}.
\end{lemma}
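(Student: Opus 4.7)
The plan is to expand both sides on $\GL(\bC,M)\times(\bC^m)^*$ and compare the residue. Differentiating $\omega_K = -d(x\ii\theta)$ and using that $\ii$ is a constant matrix together with the torsion-free structure equation $d\theta = -\oN\wedge\theta$, one obtains $\omega_K = -(dx\ii)\wedge\theta + (x\ii\oN)\wedge\theta$. Since $\ii$ is constant, reindexing it through the wedge gives $(dx\ii)\wedge\theta = dx\wedge\ii\theta$ and $x\oN\wedge\ii\theta = (x\oN\ii)\wedge\theta$. Subtracting $-\alpha\wedge\ii\theta = -(dx - x\oN)\wedge\ii\theta$ then yields
\begin{equation*}
  \omega_K + \alpha\wedge\ii\theta = x[\ii,\oN]\wedge\theta,
\end{equation*}
so the desired identity for $\omega_K$ holds if and only if $[\ii,\oN]\wedge\theta = 0$ as a 2-form on $\GL(\bC,M)$ (the $x$-factor being free).

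Next I would translate this condition down to $M$. Vertical vectors on $\GL(\bC,M)$ are generated by $\gl(m,\bC) = \ker\ad_\ii$, so both $\theta$ and $[\ii,\oN]$ annihilate them; hence the 2-form in question is determined by its pullback via any local section $s$ of $\GL(\bC,M)$, which amounts to choosing a $\bC$-linear local frame. In such a frame $I$ is represented by the constant matrix $\ii$, so for the local connection form $\omega = s^*\oN$ a direct computation gives $(\NB_X I)Y = [\omega(X),\ii]Y$. Substituting,
\begin{equation*}
  s^*\bigl([\ii,\oN]\wedge\theta\bigr)(X,Y) = (\NB_Y I)X - (\NB_X I)Y,
\end{equation*}
and vanishing of this expression for all $X,Y$ is precisely the special condition~\eqref{eq:special}.

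To finish, I would verify that~\eqref{eq:special} combined with torsion-freeness of $\NB$ already implies $N_I = 0$, so integrability of $I$ is automatic and the lemma follows. Writing out the Nijenhuis tensor via $[A,B] = \NB_A B - \NB_B A$ and combining the symmetry~\eqref{eq:special} with the anticommutation $(\NB_X I)\circ I + I\circ(\NB_X I) = 0$ (which follows from differentiating $I^2 = -\Id$) leads, after reorganising the eight resulting terms, to a total cancellation.

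The main obstacle I anticipate is keeping track of the matrix-valued forms on $\GL(\bC,M)$: because $\oN$ need not restrict to a connection on $\GL(\bC,M)$ when $\NB I\ne 0$, one cannot naively work with horizontal lifts, and the vertical-annihilation observation is the crucial step that lets the condition descend to a pointwise identity on $M$. The Nijenhuis cancellation, although standard, also requires careful bookkeeping.
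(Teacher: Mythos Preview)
Your argument is correct.  Both you and the paper reduce the identity to the vanishing of $(\ii\oN-\oN\ii)\wedge\theta$ on $\GL(\bC,M)$, but you then diverge from the paper's proof.  The paper splits $\oN=\oC+\nA$ into its $\ii$-commuting and $\ii$-anticommuting parts, observes that the condition reads $\nA\wedge\theta=0$, and uses this to deduce that $\oC$ is a torsion-free $\GL(m,\bC)$-connection; integrability of~$I$ is then read off via Newlander--Nirenberg, and the special condition is obtained from $(\NC-\NB)_X(IY)=(\NC-\NB)_Y(IX)$.  You instead pull $[\ii,\oN]\wedge\theta$ down through a $\bC$-linear frame and identify it directly with $(\NB_YI)X-(\NB_XI)Y$, obtaining the equivalence with~\eqref{eq:special} in one stroke; integrability is then supplied by an explicit Nijenhuis computation using torsion-freeness and the anticommutation $(\NB_XI)I+I(\NB_XI)=0$.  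Your route is more elementary---it avoids introducing the auxiliary connection~$\NC$ and the appeal to Newlander--Nirenberg---while the paper's decomposition has the side benefit of exhibiting~$\NC$ explicitly, which is natural if one wants to keep track of the complex geometry.  Your remark that $\oN$ need not restrict to a connection on $\GL(\bC,M)$ is well taken, and your observation that $[\ii,\oN]$ nonetheless annihilates the $\gl(m,\bC)$-vertical directions is exactly what makes the descent work.
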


\begin{proof}
  The expansion of \( \omega_K \) above shows that \( \omega_K = -
  \alpha \wedge \ii \theta \) if and only if
  \begin{equation}
    \label{eq:oK-oN}
    (\ii\oN - \oN\ii) \wedge \theta = 0,
  \end{equation}
  where \( \oN \) is pulled-back to \( \GL(\bC,M) \).  Writing \( \oN
  = \oC + \nA \), where \( \oC = \tfrac12(\oN - \ii \oN \ii) \) and \(
  \nA = \tfrac12(\oN + \ii \oN \ii) \) are the complex and
  anti-complex parts of \( \oN \), we have that \( \oC \) takes values
  in \( \gl(m,\bC) \).  Equation~\eqref{eq:oK-oN} is equivalent to
  \begin{equation}
    \label{eq:oA}
    \nA \wedge \theta = 0.
  \end{equation}
  As \( \oN \) is torsion-free, we have \( d\theta = -\oC \wedge
  \theta \) and so \( \oC \) is a torsion-free \( \GL(m,\bC) \)
  connection.  Thus we have a torsion-free connection \( \NC \) such
  that \( \NC I = 0 \).  By the Newlander-Nirenberg Theorem, such a
  connection exists if and only if \( I \) is integrable.

  Now equation~\eqref{eq:oA} is equivalent to \( 0 = \ii\nA \wedge
  \theta = - \nA \wedge \ii\theta \).  This is zero on vertical
  vectors, and for any \( X,Y\in TM \) taking any lifts \(
  X',Y',(IX)',(IY)' \) to \( T\Un(M) \) we have
  \begin{equation*}
    \ii \theta(X') = \theta((IX)') \quad\text{and}\quad
    0 = \nA(X')\theta((IY)') - \nA(Y')\theta((IX)'), 
  \end{equation*}
  giving \( (\NC-\NB)_X(IY) = (\NC-\NB)_Y(IX) \).  As \( \NC I = 0 \)
  and the two connections \( \NC \) and \( \NB \) are torsion-free,
  this is equivalent to~\eqref{eq:special}.
\end{proof}

Let us now assume that \( (M,I) \) has a Hermitian metric \( g \),
possibly of indefinite signature.  Then the structure group reduces to
\( \Un(p,q) \).  Let \( \Un(M) \) denote the bundle of unitary frames.
Pulling \( \theta \) and \( \oN \) back to \( \Un(M) \), these forms
satisfy the identities given on \( GL(M) \), in particular \(
\oN(\xi^*) = \xi \) for each \( \xi \in \un(p,q) \subset \ort(2p,2q)
\subset \End(\bR^n) \).

Considering the flat model we see that the Hermitian form on \( M \)
pulls-back to
\begin{equation*}
  - \tfrac12 \theta^T \wedge \Gi\theta.
\end{equation*}
Indeed the identity \( \pi^*g = \theta^T\G\theta \) implies \(
\pi^*\omega(X,Y) = \pi^*g(IX,Y) = \theta(IX)^T\G\theta(Y) = (\ii
\theta(X))^T\G\theta(Y) = - \theta(X)^T\Gi\theta(Y) = -
\tfrac12(\theta^T \wedge \Gi\theta)(X,Y) \).

On \( T^*M \), it is natural to look for a hyperKähler metric whose
Kähler form for~\( I \) is
\begin{equation}
  \label{eq:oI}
  \omega_I = \tfrac12 (\alpha \wedge \Gi\alpha^T - \theta^T \wedge
  \Gi\theta ). 
\end{equation}
Since such a hyperKähler metric pulls-back to the zero section as the
given Hermitian structure on~\( M \), we see that \( M \) is
necessarily Kähler.  In particular, the connection one-form \( \oLC \)
for the Levi-Civita connection on~\( M \) is a \( \Un(p,q) \)-connection
and torsion-free, so
\begin{equation*}
  \oLC^T\G = - \G\oLC,\quad \oLC \ii = \ii \oLC \quad\text{and}\quad
  d\theta = - \oLC \wedge \theta.
\end{equation*}

\begin{proposition}
  The two-forms \( \omega_I \) of~\eqref{eq:oI}, \( \omega_J = \alpha
  \wedge \theta \), \( \omega_K = - \alpha \wedge \ii \theta \) on \(
  T^*M \) give a hyperKähler structure compatible with the standard
  complex symplectic structure if and only if \( (M,I,g,\nabla) \) is
  special Kähler.
\end{proposition}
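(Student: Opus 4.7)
The plan is to apply the two preceding lemmas directly to $\omega_J$ and $\omega_K$, and to reduce the remaining content of the statement to analysing the closure of $\omega_I$ together with a local comparison to the flat model of Section~\ref{sec:hyperk-struct-flat}.

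Compatibility of the triple with the standard complex symplectic structure on $T^*M$ means that $\omega_J+i\omega_K$ should coincide with the tautological holomorphic symplectic form, with $M$ regarded as complex via~$I$. Since $\omega_J$ and $\omega_K$ are defined by the formulas in the statement, this compatibility is, by the two preceding lemmas, equivalent to requiring $\NB$ to be torsion-free, $I$ to be integrable, and $(\NB,I)$ to satisfy the special condition~\eqref{eq:special}; closure and non-degeneracy of $\omega_J$ and $\omega_K$ are then automatic.  The remaining question is when $\omega_I$ completes the picture to a genuine hyperKähler structure.

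For the ``only if'' direction, I differentiate $\omega_I=\tfrac12(\alpha\wedge\Gi\alpha^T-\theta^T\wedge\Gi\theta)$ using the structure equations $d\theta=-\oN\wedge\theta$ and $d\alpha=-\alpha\wedge\oN-x\,\ON$, where $\ON$ denotes the curvature of~$\NB$.  I expect two independent obstructions to arise from $d\omega_I=0$: a $\theta$-quadratic piece of the form $\theta^T\wedge(\oN^T\Gi+\Gi\oN)\wedge\theta$, whose vanishing forces $\NB$ to preserve~$\omega$; and a piece involving $\ON$, whose vanishing forces $\NB$ to be flat.  Combined with the conclusions of the two lemmas and the ambient Hermitian assumption on $g$, this yields the defining conditions of a special Kähler manifold; note that $\NB$ torsion-free and symplectic together force $d\omega=0$, so the Kähler condition on $M$ comes for free.

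For the converse, assuming $(M,I,g,\NB)$ special Kähler, the same computation shows $d\omega_I=0$.  Since $\NB$ is flat, the horizontal distribution $\mathcal H$ of~\eqref{eq:V-H} is integrable, so on a sufficiently small neighbourhood $U\subset M$ one can trivialise $T^*M|_U\cong U\times V^*$ with $\NB$ standard.  Under this trivialisation the triple $(\omega_I,\omega_J,\omega_K)$ reduces fibrewise to the flat hyperKähler triple on $V+V^*$ studied in Section~\ref{sec:hyperk-struct-flat}, from which non-degeneracy of $\omega_I$ and the quaternionic compatibility of the induced almost complex structures follow.  The main technical obstacle is to organise the expansion of $d\omega_I$ so that the flatness and symplecticity obstructions appear cleanly on independent tensor types, rather than becoming entangled by pointwise identities.
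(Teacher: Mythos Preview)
Your overall architecture is correct: the two lemmas take care of \( \omega_J \) and \( \omega_K \), and what remains is to read off flatness and the symplectic condition for~\( \NB \) from \( d\omega_I=0 \).  But you have mis-located the symplectic obstruction, and following your stated plan you would get stuck.

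The \( \theta \)-cubic term you write, \( \theta^T\wedge(\oN^T\Gi+\Gi\oN)\wedge\theta \), is exactly \( d(\theta^T\wedge\Gi\theta)=-2\,\pi^*d\omega \).  Its vanishing is therefore \emph{equivalent} to \( d\omega=0 \), i.e.\ to \( M \) being Kähler; for torsion-free \( \NB \) that is merely the total antisymmetrisation of \( \NB\omega \), not \( \NB\omega=0 \).  So this piece cannot force \( \NB \) to be symplectic.  (The paper in fact disposes of the Kähler condition \emph{before} the proof, by observing that the hyperKähler metric restricts to the given Hermitian structure on the zero section, whence \( M \) is Kähler a~priori; so in the paper's computation the \( \theta \)-part of \( d\omega_I \) is already zero.)

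The symplectic condition lives on the \( \alpha \)-side.  From \( \alpha=dx-x\oN \) one has \( d\alpha=-\alpha\wedge\oN-x\ON \), and substituting into \( d(\alpha\wedge\Gi\alpha^T) \) produces, besides the curvature term \( x\ON\Gi\wedge\alpha^T \), a term \( \alpha\wedge\oN\Gi\wedge\alpha^T \).  Now the clean separation you are hoping for at the end of your proposal is available precisely because \( \alpha \) detects fibre directions while \( \oN \) and \( \ON \) live over \( \Un(M) \): evaluating on \( X,Y\in T\Un(M) \) and \( Z\in T(\bR^{2m})^* \) isolates \( \ON=0 \), and evaluating on \( X\in T\Un(M) \) and \( Y,Z\in T(\bR^{2m})^* \) isolates \( \oN\Gi+\Gi\oN^T=0 \).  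This is exactly what the paper does.  Your converse direction and the pointwise comparison with the flat model of \S\ref{sec:hyperk-struct-flat} are fine.
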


The passage from \( (M,I,g,\nabla) \) to the above hyperKähler
structure on~\( T^*M \) is known as the \emph{rigid c-map}.

\begin{proof}
  It remains to show that for a torsion-free connection~\( \NB \),
  closure of \( \omega_I \) corresponds to \( \NB \) being flat and
  symplectic.  We compute
  \begin{equation*}
    2d\omega_I = - d\alpha \wedge \Gi\alpha^T + \alpha \Gi \wedge
    d\alpha^T. 
  \end{equation*}
  We have \( d\alpha = - dx \wedge \oN - x d\oN = - \alpha \wedge \oN
  - x \ON \), where \( \ON = d\oN + \oN \wedge \oN \) is the curvature
  of \( \nabla \).  This gives
  \begin{equation}
    \label{eq:d-om-I}
    \begin{split}
      2d\omega_I &= \alpha \wedge \oN \Gi \wedge \alpha^T + x \ON
      \Gi \wedge \alpha^T \eqbreak + \alpha \wedge \Gi\oN^T \wedge
      \alpha^T - \alpha \wedge \Gi\ON^T x^T\\
      &= 2(\alpha \wedge \oN \Gi \wedge \alpha^T + x \ON \Gi
      \wedge \alpha^T),
    \end{split}
  \end{equation}
  where we have used \( (\beta \wedge \gamma)^T =
  (-1)^{\degr\beta\degr\gamma}\gamma^T \wedge \beta^T \), \( \Gi^T =
  -\Gi \) and noted that each summand~\( \sigma \) of \( d\omega_I \)
  takes values in scalars, so satisfies \( \sigma^T = \sigma \).
  Evaluating \eqref{eq:d-om-I} on \( X,Y,Z \) with \( X,Y \in T\Un(M)
  \), tangent to the principal frame bundle~\( \Un(M) \), and \( Z \in
  T({\bR^{2m}}^*) \), we see that \( d\omega_I = 0 \) implies \( \ON =
  0 \), i.e., \( \nabla \) is flat.  Evaluation on \( X \in T\Un(M) \)
  and \( Y,Z \in T({\bR^{2m}}^*) \), gives that for \( \nabla \) flat
  \( d\omega_I = 0 \) is equivalent to
  \begin{equation}
    \label{eq:oN-G}
    \oN \Gi + \Gi \oN^T = 0.
  \end{equation}
  This says that \( \oN \) is symplectic, since
  \begin{equation*}
    \sP(2m,\bR) \cong \{\, A \in M_n(\bR) : A^T\jj + \jj A = 0 \,\},
  \end{equation*}
  for any \( \jj \) with \( \jj^2 = -1 \), and in particular for \(
  \jj = \Gi \).  
\end{proof}

For future use we note that \( \oN \) satisfies
\begin{equation}
  \label{eq:oN-G-th}
  (\G\oN + \oN^T\G) \wedge \theta = 0,
\end{equation}
which follows from \eqref{eq:oK-oN} and~\eqref{eq:oN-G}.

Now that we have a flat symplectic connection~\( \NB \), it is
reasonable to write out the above structures in adapted coordinates.
Suppose \( s \) is a flat symplectic frame over on open subset~\( M_0
\) of~\( M \), i.e., a section \( M_0 \to \Sp(M_0) \subset \GL(M_0) \)
of the bundle of symplectic frames.  We have \( s^*\oN = 0 \), \(
\sigma_a \coloneqq (s^*\theta)_a = s(a)^{-1} \colon T_aM \to \bR^{2m}
\).  Thus writing \( \tilde s \) for the map
\begin{equation}
  \label{eq:tilde-s}
  \tilde s \coloneqq s\times\Id\colon M_0 \times (\bR^{2m})^* \to
  \GL(M_0) \times (\bR^{2m})^*, 
\end{equation}
gives
\begin{equation}
  \label{eq:local}
  \begin{gathered}
    \tilde s^*(\omega_J) = dx\wedge \sigma,\quad
    \tilde s^*(\omega_K) = - dx \wedge h^{-1}\ii h\, \sigma,\\
    \tilde s^*(\omega_I) = \tfrac12(dx \wedge \Gi dx^T - \sigma^T
    \wedge\Gi\sigma),
  \end{gathered}
\end{equation}
where \( s = R_hu \) for any local unitary frame~\( u \).  Note that
\( h \) is a function on \( M_0 \), so \( x \) only enters the above
expressions through its differential.  Thus translations \( T_v(x) = x
+ v \), for each \( v \in (\bR^{2m})^* \), are triholomorphic
isometries of the hyperKähler structure.

\begin{remark}
  There is a natural circle action on the fibres given by \( x \mapsto
  xe^{\ii t} \).  This rotates the pair \( \omega_J \) and \( \omega_K
  \).  However, the infinitesimal action on \( \alpha \) is \( \alpha
  \mapsto dx\ii - x\ii\oN = dx\ii - x\ii(\oC+\nA) = \alpha\ii +
  2x\nA\ii \).  It follows that under the infinitesimal action \(
  \omega_I \mapsto - 2x\nA\G \wedge \alpha^T \).  Thus the action
  preserves \( \omega_I \) if and only if \( \oN=\oLC \), so \( M \)
  is a flat Kähler manifold.  Thus in general the hyperKähler metric
  obtained from the rigid c-map is different from the hyperKähler
  metrics on cotangent bundles constructed by
  Feix~\cite{Feix:cotangent} and Kaledin~\cite{Kaledin:cotangent}.
\end{remark}

\section{Conic special Kähler manifolds}
\label{sec:conic-special-kahler}

For the local c-map the central starting object is a
\enquote{projective special Kähler manifold}.  We will adopt the usual
strategy
\cite{Freed:special,Cortes-HX:supergravity,Lledo-MvPV:special} of
defining these in terms of conic special Kähler manifolds.

\begin{definition}
  A special Kähler manifold \( (M,g,I,\omega,\nabla) \) is
  \emph{conic} if it admits a vector field \( X \) such that
  \begin{enumerate}
  \item \( g(X,X) \) is nowhere vanishing, and
  \item \( \NB X = -I = \LC X \).
  \end{enumerate}
  We say that a conic structure is \emph{periodic} or
  \emph{quasi-regular} if \( X \) exponentiates to a circle action,
  \emph{regular} if that circle action is free.  We call \( X \) a
  \emph{conic isometry} of~\( C \).
\end{definition}

Note that if \( (C_1,X_1) \) and \( (C_2,X_2) \) conic special Kähler,
then \( (C_1 \times C_2,X_1+X_2) \) is too.  In this way, by
considering regular examples with different periods one gets examples
that are
\begin{enumerate*}
\item quasi-regular, but not regular, or
\item non-periodic
\end{enumerate*}
depending on whether the periods are rationally related or not.

\begin{lemma}
  \label{lem:conic-connection}
  On a conic special Kähler manifold
  \begin{enumerate}[nosep]
  \item the vector field \( X \) is an isometry preserving \( I \), and
  \item the vector field \( IX \) is a homothety that preserves both \( I
    \) and the special connection~\( \NB \).
  \end{enumerate}
\end{lemma}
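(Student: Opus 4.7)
For part (i), I plan routine Lie derivative computations with the Levi-Civita connection. From \( \LC X = -I \) and the skew-symmetry of \( \omega \),
\[
(\mathcal{L}_X g)(Y,Z) = g(\LC_Y X, Z) + g(Y, \LC_Z X) = -g(IY, Z) - g(Y, IZ) = 0,
\]
so \( X \) is Killing. For \( \mathcal{L}_X I = 0 \), a standard Kähler identity, easily derived from \( \LC I = 0 \) and the torsion-free bracket formula, gives
\[
(\mathcal{L}_\xi I)(Y) = I\,\LC_Y \xi - \LC_{IY}\xi,
\]
which vanishes as soon as \( \LC\xi \), viewed as an endomorphism of \( TM \), commutes with \( I \); this is trivial for \( \LC X = -I \).

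For the first two assertions in (ii), the computation \( \LC_Y (IX) = I \LC_Y X = I(-IY) = Y \) yields \( \LC(IX) = \Id \). This immediately gives \( \mathcal{L}_{IX} g = 2g \), exhibiting \( IX \) as a homothety, and, since \( \Id \) commutes with \( I \), the identity above also yields \( \mathcal{L}_{IX} I = 0 \).

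The main obstacle is to show that \( IX \) preserves the special connection \( \NB \), because \( \NB \neq \LC \) in general. The plan has two steps. First, I argue that \( \NB_X I = 0 \): the difference tensor \( \Phi \coloneqq \LC - \NB \) is symmetric in its arguments (both connections being torsion-free) and satisfies \( \Phi(Y, X) = 0 \) since \( \LC_Y X = \NB_Y X = -IY \); by symmetry \( \Phi(X, Y) = 0 \) for every \( Y \), so \( \NB_X \) and \( \LC_X \) act identically on vector fields, and hence \( \NB_X I = \LC_X I = 0 \). Second, the special condition \eqref{eq:special} combined with \( \NB_X I = 0 \) gives \( (\NB_Y I) X = (\NB_X I) Y = 0 \), so
\[
\NB_Y(IX) = (\NB_Y I) X + I\,\NB_Y X = -I^2 Y = Y,
\]
that is, \( \NB(IX) = \Id \). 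To conclude I invoke the short general fact that if \( \NB \) is flat and torsion-free and \( \xi \) is a vector field with \( \NB\xi = c\,\Id \) for a constant \( c \), then \( \mathcal{L}_\xi \NB = 0 \): expanding \( (\mathcal{L}_\xi \NB)(Y,Z) \) using the torsion-free bracket identity, the flatness of \( \NB \) cancels \( \NB_\xi \NB_Y - \NB_Y \NB_\xi \) against \( \NB_{[\xi, Y]} \) in a one-line computation. Applied with \( \xi = IX \) and \( c = 1 \), this finishes the proof.
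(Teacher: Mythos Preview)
Your proof is correct and follows essentially the same route as the paper: both reduce part~(ii) to showing \( \NB(IX) = \Id \) and then use flatness and torsion-freeness of \( \NB \) to conclude \( L_{IX}\NB = 0 \) via the identical curvature computation. The only noteworthy difference is in establishing \( \NB(IX) = \Id \): the paper invokes the structural fact that the difference tensor \( \eta = \NB - \LC \) is totally symmetric of type \( \{3,0\} \) (so \( X\hook\eta = 0 \) forces \( IX\hook\eta = 0 \)), whereas you derive it more directly from the symmetry of \( \Phi \) and the special condition~\eqref{eq:special}, which is a mild simplification since it avoids appealing to the symplectic part of the special Kähler axioms.
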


\begin{proof}
  The results for \( X \) follow purely from \( \LC X = -I \): we have
  \begin{equation*}
    \begin{split}
      (L_Xg)(U,V) &= Xg(U,V) - g([X,U],V) - g(U,[X,V]) \\
      &= (g(\LC_XU,V) + g(U,\LC_XV)) - g(\LC_XU-\LC_UX,V) \eqbreak -
      g(U,\LC_XV-\LC_VX) \\
      &= g(\LC_UX,V)+g(U,\LC_VX) = - g(IU,V) - g(U,IV) = 0
    \end{split}
  \end{equation*}
  and
  \begin{equation*}
    \begin{split}
      (L_XI)U &= [X,IU] - I[X,U] = (\LC_XI)U - \LC_{IU}X + I\LC_UX \\
      &= 0 + U - U = 0.
    \end{split}
  \end{equation*}
  For \( IX \) we have \( \LC(IX) = I\LC X = \Id \) and simple
  modifications of the above arguments show that \( L_{IX}g = 2g \),
  \( L_{IX}I = 0 \).  Its infinitesimal action on \( \NB \) is given
  by
  \begin{equation}
    \label{eq:L-NB}
    \begin{split}
      \MoveEqLeft[1]
      L_{IX}{\NB_UV}-\NB_{[IX,U]}V - \NB_U[IX,V] \\
      &= \NB_{IX}(\NB_UV) - \NB_{\NB_UV}(IX) -
      \NB_{[IX,U]}V - \NB_U(\NB_{IX}V- \NB_V(IX)) \\
      &= R^\NB_{IX,U}V - \NB_{\NB_UV}(IX) + \NB_U(\NB_V(IX)).
    \end{split}
  \end{equation}
  The first term vanishes since \( \NB \) is flat.  For the other
  terms we need to determine \( \NB(IX) \).  Write \( \NB = \LC + \eta
  \).  Putting \( \eta(A,B,C) = g(\eta_AB,C) \) the special Kähler
  conditions imply \( \eta \) is type \( \{3,0\} \) and totally
  symmetric.  In particular, \( \NB X = \LC X \) implies
  \begin{equation}
    \label{eq:X-eta}
    X\hook\eta = 0\quad\text{and hence}\quad  IX\hook \eta = 0.
  \end{equation}
  This gives \( \NB(IX) = \LC(IX) = \Id \).  The remaining part of
  \eqref{eq:L-NB} is thus equal to \( - \NB_UV + \NB_U(V) = 0 \), show
  that \( IX \) preserves~\( \NB \).
\end{proof}

Note that \( X \) itself does not preserve \( \NB \):
\begin{equation*}
  \begin{split}
    (L_X\NB_UV) - \NB_{[X,U]}V - \NB_U[X,V]
    &= -\NB_{\NB_UV}X + \NB_U(\NB_VX) \\
    &= I\NB_UV - \NB_U(IV) = - (\NB_UI)V,
  \end{split}
\end{equation*}
which is only symmetric, not zero.

\begin{lemma}
  The function \( \mu = \tfrac12 g(X,X) \) is both a moment map for
  the conic isometry \( X \) and a Kähler potential for \( g \).
\end{lemma}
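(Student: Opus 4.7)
The plan is to reduce both assertions to the conic condition $\LC X = -I$ (i.e., $\LC_U X = -IU$ for every $U \in TM$); the special connection $\NB$ does not enter, only the underlying Kähler data. Combined with $\LC g = 0$, the skew-symmetry of $I$ with respect to $g$, and the defining identity $\omega(U,V) = g(IU,V)$, both claims fall out of two short direct computations.

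For the moment map property, I differentiate $\mu$ and apply the Leibniz rule together with $\LC g = 0$: for any $U \in TM$,
\begin{equation*}
  d\mu(U) = g(\LC_U X, X) = -g(IU, X) = -\omega(U, X) = \omega(X, U) = (X \hook \omega)(U).
\end{equation*}
Hence $d\mu = X \hook \omega$, which is the Hamilton equation identifying $\mu$ as a moment map for the isometric action of $X$ established in Lemma~\ref{lem:conic-connection}.

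For the Kähler potential, I pass to the dual one-form $X^\flat = g(X, \cdot)$. First, $(d\mu \circ I)(U) = -g(I^2 U, X) = g(U, X) = X^\flat(U)$, so in the convention $d^c\mu = -d\mu \circ I$ one has $d^c\mu = -X^\flat$. Then I compute $dX^\flat$ via the torsion-free formula $d\alpha(U,V) = (\LC_U\alpha)(V) - (\LC_V\alpha)(U)$, using that the musical isomorphism commutes with $\LC$ (since $\LC g = 0$) and substituting $\LC X = -I$:
\begin{equation*}
  dX^\flat(U, V) = g(\LC_U X, V) - g(\LC_V X, U) = -g(IU, V) + g(IV, U) = -2\omega(U, V),
\end{equation*}
the last step by skew-symmetry of $I$. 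Therefore $dd^c\mu = -dX^\flat = 2\omega$, exhibiting $\mu$ as a global Kähler potential.

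There is no serious obstacle here: once the single identity $dX^\flat = -2\omega$ is established, both conclusions drop out by contraction with $X$ and by composing with $I$ respectively. The only real pitfall is the bookkeeping of signs for $\omega$, for the action of $I$ on one-forms, and for the moment map convention used in the paper.
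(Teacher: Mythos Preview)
Your proof is correct and follows essentially the same route as the paper: both derive $d\mu = X\hook\omega$ from $\LC_U X = -IU$, then identify $Id\mu$ with $-X^\flat$ and compute its exterior derivative to obtain $2\omega$. The only cosmetic difference is that you pass explicitly through $X^\flat$ and the torsion-free formula for $d$, whereas the paper expands $d(I(X\hook\omega))$ directly via the Cartan formula; the underlying computation is identical.
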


\begin{proof}
  To be a moment map we need \( \mu \) satisfy \( d\mu = X \hook
  \omega \).  We have
  \begin{equation*}
    \begin{split}
      (d\mu)(Y)
      &= \tfrac12 Y(g(X,X)) = g(\LC_YX,X) = - g(IY,X) = g(IX,Y) \\
      &= (X\hook\omega)(Y).
    \end{split}
  \end{equation*}
  It follows that
  \begin{equation*}
    \begin{split}
      (dId\mu)(Y,Z)
      &= (dI(X\hook\omega))(Y,Z) = -Yg(X,Z) + Zg(X,Y) + g(X,[Y,Z]) \\
      &= - g(\LC_YX,Z) + g(\LC_ZX,Y) = 2\omega(Y,Z)
    \end{split}
  \end{equation*}
  so \( \omega = \tfrac12dId\mu \) and \( \mu \) is a Kähler potential.
\end{proof}

\begin{definition}
  A \emph{projective special Kähler} manifold is a Kähler quotient \(
  S = C \symp_c X = \mu^{-1}(c)/X \) of conic special Kähler manifold \(
  C \) by a conic isometry \( X \) at some level \( c\in\bR \),
  together with the data necessary to reconstruct \( C \) up to
  equivalence.
\end{definition}

We will not dwell on the extra data needed on \( S \) to specify \( C
\), as we will not need it at this stage.  However, we do note that,
for \( c\ne0 \) the projection \( \mu^{-1}(c) \to S \) is a (pseudo-)
Riemannian submersion, and that the Kähler form~\( \omega_S \) on \( S
\) pulls-back to \( \mu^{-1}(c) \) as~\( \iota^*\omega \), where \(
\iota \colon \mu^{-1}(c) \to C \) is the inclusion.  The Kähler
structure on \( S \) is of Hodge type, since the connection form \(
\varphi = \iota^*X^\flat / g(X,X) = 2\iota^*X^\flat / c \) has
curvature
\begin{equation}
  \label{eq:curv-C}
  d \varphi = 2\iota^*dX^\flat/c = - 4\iota^*\omega /c.
\end{equation}

Let \( \hX \) be the horizontal lift of a conic isometry \( X \) to
the cotangent bundle \( H = T^*C \).  This is the vector field in \(
TT^*C = \mathcal V \oplus \mathcal H = \ker\pi_*\oplus\ker\alpha\),
see~\eqref{eq:V-H}, defined by \( \alpha(\hX) = 0 \), \( \pi_*(\hX) =
X \).  Equip \( H \) with the hyperKähler geometry of
section~\ref{sec:rigid-c-map}.

\begin{proposition}
  \label{prop:tilde-X-JK}
  The horizontal lift \( \hX \) is an isometry of \( H \) preserving
  \( \omega_I \) and with
  \begin{equation*}
    L_{\hX}\omega_J = \omega_K,\qquad L_{\hX}\omega_K = -\omega_J.
  \end{equation*}
\end{proposition}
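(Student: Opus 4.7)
The plan is to work in the local trivialization \( \tilde s \colon M_0 \times (\bR^{2m})^* \to T^*M_0 \) coming from a flat symplectic frame~\( s \) of the special Kähler connection, for which \( s^*\oN = 0 \). Under~\( \tilde s \) the horizontal distribution pulls back to \( \ker dx \), so \( \hX \) corresponds to the vector field \( (X, 0) \); since \( \omega_I \), \( \omega_J \), \( \omega_K \) are closed, Cartan's formula reduces each identity to a computation of \( d\iota_{\hX} \) applied to the local expressions~\eqref{eq:local}. The essential auxiliary will be the function \( \xi_X \coloneqq s^{-1}(X)\colon M_0 \to \bR^{2m} \); from \( s^*\oN = 0 \) together with \( \NB X = -I \) one obtains \( d\xi_X = -J_s\sigma \), where \( J_s = s^{-1}Is = h^{-1}\ii h \) represents~\( I \) in the symplectic frame \( s = R_hu \).

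For~\( \omega_J \) the contraction gives \( \iota_{\hX}\omega_J = -\xi_X\cdot dx \), and differentiating yields \( L_{\hX}\omega_J = -dx\wedge J_s\sigma = \tilde s^*\omega_K \). For~\( \omega_K \) the contraction is \( \iota_{\hX}\omega_K = J_s\xi_X\cdot dx \), so one must evaluate \( d(J_s\xi_X) = (dJ_s)\xi_X + J_s\,d\xi_X \); the second summand gives \( -J_s^2\sigma = \sigma \). The first summand is the main obstacle and is where the special Kähler condition becomes essential: writing \( \oN = \oLC + \eta \) and using unitarity of the Levi-Civita connection (so \( [\ii, u^*\oLC] = 0 \)), one obtains \( dJ_s = -h^{-1}[\ii, u^*\eta]h \), and evaluating on \( \xi_X \) gives
\begin{equation*}
  [\ii, (u^*\eta)(W)]\cdot u^{-1}(X)
  = u^{-1}\bigl(I\eta_W X - \eta_W(IX)\bigr)
  = u^{-1}\bigl(I\eta_X W - \eta_{IX}W\bigr),
\end{equation*}
where the second equality uses total symmetry of~\( \eta \). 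Both summands vanish by~\eqref{eq:X-eta}, so \( (dJ_s)\xi_X = 0 \) and \( L_{\hX}\omega_K = \sigma\wedge dx = -\tilde s^*\omega_J \).

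For~\( \omega_I \) the \( dx\wedge\Gi dx^T \) part of the contraction vanishes, leaving \( \iota_{\hX}\omega_I = -\xi_X^T\Gi\sigma \), with differential \( \sigma^T J_s^T \wedge \Gi\sigma \). Since \( I \) preserves the Kähler form on~\( C \), \( J_s \) is \( \Gi \)-symplectic, giving \( J_s^T\Gi = -\Gi J_s \); evaluating on vectors \( V, W \) and using \( J_s\sigma(V) = \sigma(IV) \) together with \( \omega(V, W) = -\sigma(V)^T\Gi\sigma(W) \), the result is the antisymmetrization of \( g(IV, IW) \), which vanishes by symmetry of \( g \). Finally, for the isometry claim, the preceding relations combine into \( L_{\hX}(\omega_J + i\omega_K) = -i(\omega_J + i\omega_K) \); since \( \omega_J + i\omega_K \) is a non-degenerate \( (2, 0) \)-form for the hyperKähler complex structure~\( I \), a type-decomposition argument (using that \( L_{\hX}I \) anticommutes with~\( I \), together with non-degeneracy) forces \( L_{\hX}I = 0 \), and combined with \( L_{\hX}\omega_I = 0 \) this implies that \( \hX \) is Killing.
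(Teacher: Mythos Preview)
Your proof is correct, but it takes a different route from the paper's.  The paper works on the unitary frame bundle \( \Un(C)\times(\bR^{2m})^* \), where the complex structure is represented by the \emph{constant} matrix~\( \ii \); the key formula there is \( d\chi = -\ii\theta - \oN\chi \) for \( \chi(u)=u^{-1}(X) \), obtained directly from \( \NB X = -I \).  With \( \ii \) constant, the computation of \( L_{\hX}\omega_K \) parallels that of \( L_{\hX}\omega_J \) with no extra work, and the isometry statement follows in one line from the fact that \( \omega_I,\omega_J,\omega_K \) together determine~\( g \).

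You instead pass to a flat symplectic frame, which kills all \( \oN \)-terms but forces you to carry the \emph{variable} representative \( J_s = h^{-1}\ii h \) of~\( I \).  The price shows up in the \( \omega_K \)-step: you must compute \( dJ_s \) and then kill \( (dJ_s)\xi_X \) by invoking the total symmetry of~\( \eta \) together with \( X\hook\eta = 0 = IX\hook\eta \) from~\eqref{eq:X-eta}.  This is a genuine extra ingredient that the paper's approach does not need explicitly; on the other hand, your calculation makes transparent exactly where the conic hypothesis \( \NB X = \LC X \) (not merely \( \NB X = -I \)) enters.  Your final deduction of \( L_{\hX}I = 0 \) from \( L_{\hX}(\omega_J+i\omega_K)=-i(\omega_J+i\omega_K) \) via non-degeneracy of the complex symplectic form is a clean alternative to the paper's terse ``the three Kähler forms determine the metric''.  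Both approaches are valid; the paper's is shorter, yours is more explicit about the role of the special Kähler and conic conditions.
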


\begin{proof}
  Let \( \hX \) also denote any choice of lift of~\( \hX \) to a
  vector field on \( \Un(C) \times (\bR^{2m})^* \) or \( \GL(C) \times
  (\bR^{2m})^* \). The essential point is to compute the quantity \(
  d\chi \), where \( \chi_{(u,v)} = \theta_u(\hX) = u^{-1}(X) \).

  On \( \Un(C) \), we claim that
  \begin{equation}
    \label{eq:d-lift}
    d\chi = - \ii \theta - \oN \chi.
  \end{equation}
  To see this, note that \( \chi\colon \GL(C) \to \bR^{2m} \) is the
  equivariant map representing the section~\( X \) of~\( TC \).  It
  follows that \( \NB X \) is represented by the form \( d\chi +
  \oN\chi \in \Omega^1(\GL(C),\bR^{2m}) \).  But \( \NB X = -I \), so
  \( \NB_AX \) is represented by \( u\mapsto u(\theta_u(-(IA)')) \),
  where \( (IA)' \) is any vector on \( \GL(C) \) projecting to \(
  A\in TC \).  On \( \Un(C) \), we have \( \theta_u(-(IA)') =
  u^{-1}(-IA) = -\mathbf i u^{-1}(A) = -\mathbf i\theta_u(A') \) and
  this gives the claimed formula~\eqref{eq:d-lift}.

  We now find
  \begin{equation*}
    L_{\hX}\theta = d(\hX \hook \theta) + \hX \hook d\theta = d\chi
    -\oN(\hX)\theta + \oN\chi = - \ii \theta - \oN(\hX)\theta.  
  \end{equation*}
  Since \( L_{\hX}x = dx(\hX) \), we get
  \begin{equation*}
    L_{\hX}\omega_J = d(L_{\hX} (x \theta)) =
    d(\alpha(\hX)\theta -x\ii \theta) = - d(x\ii\theta)
    = \omega_K. 
  \end{equation*}
  Similarly \( L_{\hX}\omega_K = - \omega_J \).  On the other hand,
  \begin{equation*}
    L_{\hX}\omega_I = d(\hX\hook\omega_I)
    = -\tfrac12 d(\hX \hook \theta^T\wedge\Gi\theta)
    = \pi^* d(X\hook \omega) = 0.
  \end{equation*}

  As the hyperKähler metric is specified by \( \omega_I \), \(
  \omega_J \) and \( \omega_K \), it follows that \( \hX \) is an
  isometry.
\end{proof}

For reference, flatness of \( \NB \) implies
\begin{equation*}
  L_{\hX}\alpha = \hX \hook d\alpha
  = \hX \hook (-\alpha\wedge\oN - x\ON)
  = \alpha \oN(\hX).
\end{equation*}

\section[Twisting hyperKähler manifolds]{Twisting hyperKähler
manifolds by a rotating circle symmetry}
\label{sec:twist-hK}

Let \( (M,g,I,J,K) \) be a hyperKähler manifold.  Suppose that \( X \)
generates a circle action that is isometric, preserves \( I \) but
rotates \( J \) and \( K \).  More precisely assume that
\begin{equation}
  \label{eq:rot-X}
  L_Xg = 0,\quad L_XI = 0\quad\text{and}\quad L_XJ = K.
\end{equation}
We write \( \omega_I(\cdot,\cdot)=g(I\cdot,\cdot) \), etc., for the
Kähler forms. and put
\begin{equation*}
  \alpha_0 = X^\flat = g(X,\cdot),\quad \alpha_A = (AX)^\flat =
  A\alpha_0 \quad\text{for \( A=I,J,K \)}.
\end{equation*}

The question we wish to address is when can this circle action be used
to twist \( (M,g) \) to a quaternionic Kähler metric.  As the twist
construction~\cite{Swann:twist} does not preserve closed forms, we
expect to have to adjust our original structures before twisting.
Therefore consider the metric
\begin{equation*}
  g_N = fg + h\,(\alpha_0^2 + \alpha_I^2 + \alpha_J^2 + \alpha_K^2)
\end{equation*}
for some unknown functions \( f,h \in C^\infty(M) \).  It will be
convenient to allow this metric to be indefinite.  Twisting the
geometry by an unknown curvature form
\begin{equation*}
  F \in \Omega^2_\bZ(M)
\end{equation*}
via a twisting function \( a \in C^\infty(M) \), requires
\begin{equation}
  \label{eq:da}
  da = - X \hook F.
\end{equation}
Let \( W \) be the twist of \( M \) with respect to \( X \), \( F \)
and \( a \).  Topologically \( W = P/\langle X'\rangle \), where \( P
\to M \) is a principal circle bundle with connection one-form \(
\theta_P \) whose is curvature~\( F \).  If the principal action on \( P
\) is generated by \( Y \) and \( \hat X \) is the horizontal
lift of \( X \) to \( P \), then \( X' = \hat X + aY \).  The
geometry on \( W \) is induced from that on \( M \) by pulling
invariant tensors (metrics, complex structures, etc.) back to the
horizontal distribution \( \hor = \ker\theta_P \) and then pushing them
down to the quotient \( W \); we say that such tensor are \Hrelated
and write \( \Hrel \) for this relation.  For this to work we need~\(
X' \) to be transverse to~\( \hor \), which is equivalent to the
non-vanishing of~\( a \).

Note that the original hyperKähler metric has fundamental four form
\begin{equation*}
  \Omega = \omega_I^2 + \omega_J^2 + \omega_K^2,
\end{equation*}
which is invariant under the action of~\( X \).  We write \(
\omega^N_I \) for the \( 2 \)-form defined by \( (g_N,I) \), etc., and
\( \Omega_N \) for the corresponding four-form.  We will look for
twists~\( W \) where \( \Omega_N \) is \Hrelated to a quaternionic
Kähler four-form~\( \Omega_W \).  Pointwise \( \Omega_N \) and \(
\Omega_W \) agree when pulled back to~\( \hor \), which is isomorphic
to both \( T_xM \) and~\( T_yW \).  In dimensions at least \( 12 \),
when \( g_N \) is non-degenerate, the only condition now required for
\( \Omega_W \) to define a quaternionic Kähler metric is that this
four-form be closed \cite{Swann:symplectiques}.  The result we will
prove is:

\begin{theorem}
  \label{thm:qK12}
  Suppose \( X \) is a non-null vector field satisfying
  \eqref{eq:rot-X} on a hyperKähler manifold of dimension at least \(
  8 \).  Then the only twists of \( g_N \) that are quaternionic Kähler
  are given by the data
  \begin{equation*}
    F = k(dX^\flat + \omega_I),\qquad a = k(\norm X^2 - \mu + c),
  \end{equation*}
  with
  \begin{equation*}
    f = \frac B{\mu-c}\quad\text{and}\quad h = -\frac B{(\mu-c)^2},
  \end{equation*}
  where \( \mu \) is a Kähler moment map for the action of \( X \) on
  \( (M,g,I) \) and \( c,k,B \) are constants.
\end{theorem}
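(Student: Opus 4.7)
The plan is to convert the quaternionic Kähler condition on $W$ into an equation on $M$, using the twist formula for exterior derivatives, and then extract $F$, $a$, $f$ and $h$ by decomposing that equation by type.

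From the twist construction of~\cite{Swann:twist}, any $X$-invariant form $\varphi$ on $M$ with $\hor$-related partner $\varphi_W$ on $W$ has $d\varphi_W$ being $\hor$-related to $d\varphi - a^{-1}F\wedge(X\hook\varphi)$. Since the quaternionic Kähler condition on $\Omega_W$ reduces to closedness in the dimension range considered, the requirement becomes
\[
  a\,d\Omega_N \;=\; F\wedge(X\hook\Omega_N)
\]
on $M$, to be solved together with $da = -X\hook F$, $dF = 0$ and $L_X F = 0$.

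Next I would expand $\Omega_N$ explicitly. Since the quadruple $\alpha_0,\alpha_I,\alpha_J,\alpha_K$ is permuted up to signs by each of $I$, $J$, $K$, the metric $g_N$ is Hermitian for all three complex structures, giving
\[
  \omega^N_I \;=\; f\omega_I + h(\alpha_0\wedge\alpha_I + \alpha_J\wedge\alpha_K)
\]
and cyclically for $J$, $K$. Summing squares presents $\Omega_N$ as a polynomial in $f$, $h$, the $\omega_A$ and the $\alpha_B$. To differentiate I would use $d\omega_A = 0$ together with the rotation consequences $d\alpha_I = 0$ (since $\alpha_I = d\mu$), $d\alpha_J = \omega_K$, $d\alpha_K = -\omega_J$, so that $\sigma := dX^\flat = d\alpha_0$ is the only genuinely free two-form piece; for $X\hook\Omega_N$ I would use $X\hook\omega_A = \alpha_A$, $X\hook\alpha_A = 0$ and $X\hook\alpha_0 = \norm{X}^2$.

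The core of the argument is then a decomposition of the master equation by its multi-degree in the one-forms $\alpha_0,\alpha_I,\alpha_J,\alpha_K$. Among the components of lowest $\alpha$-degree, the non-degeneracy of $\omega_I$ on directions transverse to the orbits, together with the quaternionic relations, should force $F = k(\sigma+\omega_I)$ with $k$ a constant; the remaining components then reduce to ODEs in $\mu$ for $f$ and $h$ (essentially $df = h\,d\mu$ together with $f + (\mu-c)h = 0$), whose unique solution is $f = B/(\mu-c)$, $h = -B/(\mu-c)^2$. Finally $a$ is recovered by integrating $da = -X\hook F = k(d\norm{X}^2 - d\mu)$, using $X\hook dX^\flat = -d\norm{X}^2$. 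The main obstacle is this uniqueness step: ruling out alternative forms of $F$ and showing that $f$ and $h$ depend on $\mu$ alone requires carefully exploiting both the non-degeneracy of the hyperKähler three-form and the rigidity imposed on $J$ and $K$ by the rotation relations $L_XJ=K$, $L_XK=-J$.
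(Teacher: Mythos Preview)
Your strategy is the paper's: translate closedness of \(\Omega_W\) into an equation on \(M\) via \(d_W\gamma = d\gamma - a^{-1}F\wedge(X\hook\gamma)\), then decompose according to the splitting \(TM = \mathbb{H}X \oplus (\mathbb{H}X)^\perp\).  But two steps of your outline are in the wrong order, and one case is missing.

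First, the type decomposition does \emph{not} by itself force \(F = k(\sigma+\omega_I)\).  What it forces is \(f=f(\mu)\), \(h=f'\), and a formula for the combination
\[
  H \;=\; hG - \tfrac{1}{a}\bigl(f+h\norm{X}^2\bigr)F,
\]
namely \(H = \tfrac{1}{2f}\bigl((f^2)'' - 6(f')^2\bigr)(\alpha_{0I}-\alpha_{JK})\).  Here \(G\) is the piece of \(d\alpha_0\) lying in \(S^2E = \bigcap_A\Lambda^{1,1}_A\); the fact that \(d\alpha_0 = G-\omega_I\) with \(G\) of this special type is a separate lemma you need to prove first, and without it you cannot cleanly separate the contributions of \(F\) and \(dX^\flat\) in the master equation.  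Solving the displayed relation for \(F\) yields an expression involving \emph{both} \(G\) and \(\alpha_{0I}-\alpha_{JK}\), with coefficients depending on \(f\), \(f'\), \(f''\), \(a\) and \(\norm{X}^2\).  It is only after imposing \(da=-X\hook F\) \emph{and} \(dF=0\) that the \(\alpha_{0I}-\alpha_{JK}\) coefficient is forced to vanish, collapsing \(F\) to \(kG = k(dX^\flat+\omega_I)\).  The vanishing condition is the ODE \(R^2-R'=0\) for \(R=f'/f\), whose solution \(R=1/(c-\mu)\) introduces the constant \(c\).  Your equation ``\(f+(\mu-c)h=0\)'' already contains \(c\), so you have written down the solution, not the equation; and you have attributed it to the type decomposition rather than to closedness of \(F\), where it actually comes from.

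Second, the reduction of the quaternionic K\"ahler condition to closedness of \(\Omega_W\) holds only for \(\dim M\geqslant 12\).  The theorem is stated for \(\dim M\geqslant 8\); in dimension \(8\) one must additionally verify that the local two-forms \(\omega^W_A\) generate a differential ideal, which requires tracking \(d_W\omega^N_A\) rather than just \(d_W\Omega_N\), and handling the non-invariance of \(\omega_J,\omega_K\) under \(X\).  Your outline does not address this case.
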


We will start by concentrating on the case when \( \dim M \) is at
least \( 12 \).  However, first let us note that it is a simple
consequence that this twist data is usable in all dimensions.

\begin{corollary}
  \label{cor:all}
  In all dimensions, the data \( F \), \( a \), \( f \) and \( h \) of
  Theorem~\ref{thm:qK12} define a twist that is quaternionic Kähler when \(
  g_N \) is non-degenerate.
\end{corollary}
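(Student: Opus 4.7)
The plan is to reduce the low-dimensional cases ($\dim M \in \{4,8\}$) to the case $\dim M \ge 12$ already covered by Theorem~\ref{thm:qK12} by exhibiting $W$ as a quaternionic submanifold of a higher-dimensional twist. Given a hyperKähler manifold $(M,g,I,J,K)$ with rotating vector field $X$ of small dimension, I would form the product $\tilde M = M \times \bH^k$ with the direct-sum flat hyperKähler structure on $\bH^k$ and extended rotating vector field $\tilde X = X \oplus X_0$, where $X_0(q) = \ii q$ generates the standard $\Sp(1)$-rotation on $\bH^k$. The pair $(\tilde M,\tilde X)$ still satisfies~\eqref{eq:rot-X}, and $\dim\tilde M \ge 12$ for $k$ sufficiently large. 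Applying Theorem~\ref{thm:qK12} to $(\tilde M,\tilde X)$ with the corresponding data $\tilde F,\tilde a,\tilde f,\tilde h$ produces a quaternionic Kähler twist $\tilde W$.

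Next I would exploit that along the slice $M \times \{0\} \subset \tilde M$, where $X_0$ and the moment map $\mu_0$ both vanish, the tilded data pull back to the original data on $M$. Since $X$ and $X_0$ live in disjoint tangent summands, $\tilde F = F + F_0$ splits block-diagonally along $TM \oplus T\bH^k$, so only the $F$-component survives the pullback by the inclusion $M \hookrightarrow \tilde M$. The remaining restrictions $\tilde a|_{M\times\{0\}} = a$, $\tilde f|_{M\times\{0\}} = f$, $\tilde h|_{M\times\{0\}} = h$ hold up to additive constants absorbed into $c$ and $B$. Consequently the pre-image of $M \times \{0\}$ under the twist projection $\tilde W \to \tilde M$ is canonically identified with the twist $W$ of $M$, giving an isometric embedding $W \hookrightarrow \tilde W$ of codimension $4k$.

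The final step is to verify that $W$ sits as a quaternionic submanifold of $\tilde W$, so that Gray's theorem on quaternionic submanifolds of quaternionic Kähler manifolds transfers the quaternionic Kähler structure onto $W$. At a point $w \in W$ with image $(m,0)$, the hyperKähler tensors on $\tilde M$ split as direct sums along $T_mM \oplus T_0\bH^k$, and the twist modifications $h\,\alpha_A^2$ ($A=0,I,J,K$) contribute only to the $T_mM$-block since $X$, $IX$, $JX$, $KX$ vanish on $\bH^k$-directions at $(m,0)$. Hence the almost quaternionic $\Sp(1)$-action on $\tilde W$ at $w$ preserves the summand $T_wW \cong T_mM$, and Gray's theorem concludes that $W$ is quaternionic Kähler (and totally geodesic in $\tilde W$). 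The main obstacle is the careful bookkeeping needed to rule out off-diagonal terms in the formulas for $\tilde F$, $\tilde a$, and the twist metric at the slice, but this is straightforward once one notes that $X_0|_0 = 0$, $\mu_0(0) = 0$, and $X$ has no $\bH^k$-component.
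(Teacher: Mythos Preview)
Your approach is essentially the same as the paper's: form the product with a flat quaternionic factor carrying a rotating circle action, apply Theorem~\ref{thm:qK12} in the higher dimension, and invoke Gray's theorem on quaternionic submanifolds to conclude that the original twist \(W\) is quaternionic Kähler. The paper uses \(\bH^2\) with the conjugation action \(q\mapsto e^{i\theta/2}qe^{-i\theta/2}\) rather than your left-multiplication formula \(X_0(q)=\ii q\) (the latter needs care with conventions, since with \(I,J,K\) acting by right multiplication it is tri-holomorphic rather than rotating), but once a genuinely rotating action on the flat factor is chosen the two arguments coincide.
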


\begin{proof}
  Consider \( M \times \bH^2 \), with circle action on \( \bH^2 \)
  given by \( q = z + jw \mapsto z + je^{-i\theta}w =
  e^{i\theta/2}qe^{-i\theta/2} \).  Then the twist \( W \) is a
  quaternionic submanifold of the twist of \( M\times \bH^2 \), so
  totally geodesic and hence quaternionic Kähler by
  Gray~\cite{Gray:Sp}.
\end{proof}

The constants \( c,k,B \) in Theorem~\ref{thm:qK12} have the following
significance.  Firstly \( B \) is just an overall scaling of the
metric, so only adjusts the result by a homothety.  The scalar~\( k \)
changes the curvature form, and thus affects the topology of the
twist.  Note for global constructions the curvature form~\( F \) must
have integral periods.  Scaling of \( k \) can help to achieve this.
Different choices of~\( k \) then correspond to coverings of the twist
manifold.  Finally \( c \) is the only constant which affects the
local properties of the quaternionic Kähler metric, but it also
affects the global picture by changing the lift of~\( X \) to the
twist bundle.

It follows that the twist construction above agrees with the
constructions of Haydys \cite[via eqn.~(18)]{Haydys:hKS1},
Hitchin~\cite{Hitchin:hK-qK} and Alekseevsky, Cortés and Mohaupt
\cite[via eqn.~(2.4)]{Alekseevsky-CDM:qK-special} and that those
constructions do not admit further variants of the type above.  It
also follows that this construction is inverted by the quaternionic
flip of Haydys~\cite{Haydys:hKS1}.  In particular, given an
isometry~\( Y \) of a quaternionic Kähler manifold~\( Q \), by
\cite{Swann:MathAnn} one may lift \( Y \) to a tri-holomorphic
isometry~\( Y_U \) if the associated bundle \( \UM(Q) \).  Then the
one-dimensional family of corresponding hyperKähler manifolds with
rotating~\( X \) are provided by the hyperKähler quotients of~\(
\UM(Q) \) by \( Y_U \) at different levels.

\subsection{Quaternionic Kähler twists in high dimensions}
\label{sec:quat-kahl-twists}

This section will be devoted to proving Theorem~\ref{thm:qK12} in most
dimensions.  In particular, for \( \dim M \geqslant 12 \) we will
verify that this twist data always leads to a quaternionic Kähler
manifold and we will prove that for this is the only such data that
suffices.

\medbreak First, to determined \( d\Omega_W \) for a general twist, we
need a little more notation, part of which is contained in the next
result.

\begin{lemma}
  The exterior derivatives of \( \alpha_I \), \( \alpha_J \), \(
  \alpha_K \) and \( \alpha_0 = X^\flat \) are
  \begin{gather}
    \label{eq:d-alpha}
    d\alpha_I = 0, \quad d\alpha_J = \omega_K, \quad d\alpha_K =
    -\omega_J
    \shortintertext{and}
    \label{eq:d0}
    d\alpha_0 = G - \omega_I,
  \end{gather}
  for some \( G \in S^2E = \bigcap_{A=I,J,K} \Lambda^{1,1}_A \).
\end{lemma}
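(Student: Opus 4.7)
The plan is to handle the three identities for $\alpha_I,\alpha_J,\alpha_K$ uniformly via Cartan's magic formula and the rotation hypotheses, then address the identity for $d\alpha_0$ by a direct computation using $\LC X$ as a skew-symmetric endomorphism.

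For $A=I,J,K$, unpacking $\alpha_A = A\alpha_0 = (AX)^\flat$ gives $\alpha_A(Y) = g(AX,Y) = \omega_A(X,Y)$, so $\alpha_A = X\hook\omega_A$. As each $\omega_A$ is closed, Cartan's formula gives $d\alpha_A = L_X\omega_A$. Since $L_Xg = 0$, the form $L_X\omega_A$ corresponds under $g$ to the endomorphism $L_XA$, so the hypotheses $L_XI = 0$ and $L_XJ = K$ immediately yield $d\alpha_I = 0$ and $d\alpha_J = \omega_K$. The third identity follows from $K = IJ$ and Leibniz: $L_XK = (L_XI)J + I(L_XJ) = IK = -J$, hence $d\alpha_K = -\omega_J$.

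The formula $d\alpha_0 = G - \omega_I$ is the substantive step. Since $X$ is Killing, $dX^\flat(Y,Z) = 2g(\LC_YX,Z)$. I would set $G \coloneqq dX^\flat + \omega_I$ and verify $G \in S^2E = \bigcap_{A=I,J,K}\Lambda^{1,1}_A$ by checking $G(AY,AZ) = G(Y,Z)$ for each $A$. The key input is the commutator identity $[\LC X,A] = -L_XA$, which holds because $\LC A = 0$, yielding
\[
\LC_{IY}X = I\LC_YX,\qquad \LC_{JY}X = -KY + J\LC_YX,\qquad \LC_{KY}X = JY + K\LC_YX.
\]
Substituting these into $G(AY,AZ)$ and exploiting the $A$-invariance of $g$, the case $A=I$ collapses at once. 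For $A=J$, the extra contribution $-2g(KY,JZ) = 2g(IY,Z)$ combines with $\omega_I(JY,JZ) = -\omega_I(Y,Z)$ to produce exactly the $\omega_I$ correction built into $G$; the case $A=K$ runs identically using $[\LC X,K] = J$ and $g(JY,KZ) = g(IY,Z)$.

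The only real care is sign bookkeeping: the relations between $L_XA$ and $[\LC X,A]$, the directions of the interior products defining $\alpha_A$, and the quaternionic triple $IJ=K$, $JK=I$, $KI=J$. I expect no genuine conceptual obstacle beyond this.
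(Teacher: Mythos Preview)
Your argument is correct. The treatment of \( d\alpha_I,d\alpha_J,d\alpha_K \) via Cartan's formula matches the paper exactly. For \( d\alpha_0 \), however, you take a different route: you \emph{define} \( G = d\alpha_0 + \omega_I \) and verify \( G(AY,AZ) = G(Y,Z) \) for each \( A \) by the commutator identity \( [\LC X,A] = -L_XA \). The paper instead argues representation-theoretically: since \( X \) preserves both the \( \Sp(n)\Sp(1) \)-structure and the Kähler structure for \( I \), the skew endomorphism \( \LC X \) must lie in \( (\sP(n)+\sP(1))\cap\un(2n)_I = S^2E \oplus \bR\omega_I \), so \( d\alpha_0 \) has the form \( G + c\,\omega_I \) with \( G\in S^2E \) a priori; the coefficient \( c=-1 \) is then read off from the \( \{2,0\}_J \)-part using the already-established identity \( d\alpha_J=\omega_K \). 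Your approach is more elementary and fully self-contained, requiring no decomposition of \( \Lambda^2 \) into \( \Sp(n)\Sp(1) \)-irreducibles; the paper's approach explains structurally \emph{why} only an \( \omega_I \)-correction can appear before doing any calculation. Both are short, and the sign bookkeeping you flag is indeed the only place to be careful.
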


\begin{proof}
  The first three assertions follow from \( L_X\omega_I = 0 \), \(
  L_X\omega_J = \omega_K \) via Cartan's formula \( L_X\omega_A =
  X\hook d\omega_A + d(X\hook \omega_A) = 0 + d\alpha_A \).  For the
  final relation, start by noting that the Killing vector field \( X
  \) preserves both the \( \Sp(n)\Sp(1) \)-structure, where \(
  \Sp(n)\Sp(1) \) is the normaliser of \( \Sp(n) \) in \( \SO(4n) \),
  and the Kähler structure \( (g,I) \), which has structure group \(
  \Un(2n)_I \).  It follows that \( \LC X \in (\sP(n)+\sP(1))\cap
  \un(2n)_I \subset \Lambda^2T^*M \).  Under the action of \(
  \Sp(n)\Sp(1) \), we have \( \Lambda^2T^*M = S^2E + S^2H +
  \Lambda^2_0ES^2H \), where \( E \cong \bC^{2n} \) is the fundamental
  representation of \( \Sp(n) \) and \( H \cong \bC^2 \) is that of \(
  \Sp(1) \).  Now the three-dimensional subspace \( \sP(1) = S^2H
  \subset \Lambda^2T^*M \) is spanned by the Kähler forms \( \omega_I
  \), \( \omega_J \) and \( \omega_K \).  With respect to \( I \) we
  have \( \Lambda^{1,1}_IM = S^2E + \bR\omega_I + \Lambda^2_0E\bR_I
  \), with \( S^2E + \bR\omega_I = \sP(n) + \un(1)_I \).  So we
  conclude that \( \LC X \in S^2E + \bR\omega_I \).  To compute the
  coefficient of \( \omega_I \), we note that this component is the \(
  \{2,0\}_J \) part of \( d\omega_I \).  From \( d\alpha_J = \omega_K
  \), we find
  \begin{equation*}
    \begin{split}
      g(KA,B) &= \omega_K(A,B) = d(JX)^\flat(A,B) 
      = g(\LC_A(JX),B) - g(\LC_B(JX),A) \\
      &= - g(\LC_AX,JB) + g(\LC_BX,JA) 
      = -\tfrac12 dX^\flat(A,JB) + \tfrac12 dX^\flat(B,JA) \\
      &= -\tfrac12 (d\alpha_0(JA,B) + d\alpha_0(A,JB)).
    \end{split}
  \end{equation*}
  This implies that \( d\alpha_0^{\{2,0\}_J} = \tfrac12(1-J)d\alpha_0 =
  - g(KJ\cdot,\cdot) = -\omega_I \), as claimed.
\end{proof}

On the quaternionic span of~\( X \), the forms \( \alpha_i \), \(
i=0,I,J,K \), give a volume element
\begin{equation*}
  \vol_\alpha = \alpha_{0IJK} = \alpha_0 \wedge \alpha_I \wedge
  \alpha_J \wedge \alpha_K
\end{equation*}
and \( 2 \)-forms
\begin{equation}
  \label{eq:omega-alpha}
  \oma I = \alpha_{0I} + \alpha_{JK},\quad
  \oma J = \alpha_{0J} + \alpha_{KI}\quad\text{and}\quad
  \oma K = \alpha_{0K} + \alpha_{IJ}.
\end{equation}
With this notation the Hermitian forms of \( g_N \) are
\begin{equation*}
  \omega^N_I = f \omega_I + h \oma I,\qquad\text{etc.}
\end{equation*}

\begin{proposition}
  \label{prop:Omega-W}
  The four-form \( \Omega_W \) \Hrelated to \( \Omega_N \) satisfies
  \begin{equation*}
    \begin{split}
      d\Omega_W &\Hrel d(f^2) \wedge \Omega + 6d(h^2) \wedge
      \vol_\alpha - 2 fh \alpha_I \wedge \Omega \eqbreak + 2 (d(fh) -
      3h^2 \alpha_I) \wedge \sum_{A=I,J,K} \oma A \wedge \omega_A
      \eqbreak + 2fH \wedge \sum_{A=I,J,K} \alpha_A \wedge \omega_A +
      6 hH \wedge \alpha_{IJK}, 
    \end{split}
  \end{equation*}
  where
  \begin{equation}
    \label{eq:HGF}
    H = hG - \frac1a(f + h\norm X^2)F.
  \end{equation}
\end{proposition}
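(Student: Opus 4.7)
The proof is a direct calculation. First expand $\Omega_N = \sum_{A=I,J,K}(\omega^N_A)^2$ using $\omega^N_A = f\omega_A + h\oma A$. For each $A$ one has $(\oma A)^2 = 2\vol_\alpha$ (indeed $(\alpha_{0A}+\alpha_{BC})^2 = 2\alpha_{0A}\wedge\alpha_{BC} = 2\vol_\alpha$, with $(B,C)$ the complementary pair), so $\sum_A(\oma A)^2 = 6\vol_\alpha$ and
\begin{equation*}
  \Omega_N = f^2\Omega + 2fh\sum_A \omega_A\wedge\oma A + 6h^2\vol_\alpha.
\end{equation*}

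Next compute $d\Omega_N$ from $d\omega_A = 0$ and the preceding lemma. Direct Leibniz expansion yields explicit formulas for each $d\oma A$; the $A=I$ case is
\begin{equation*}
  d\oma I = G\wedge\alpha_I - \omega_I\wedge\alpha_I + \omega_J\wedge\alpha_J + \omega_K\wedge\alpha_K,
\end{equation*}
with similar but less symmetric expressions for $A = J,K$ containing extra $\alpha_0$-terms coming from $d\alpha_J = \omega_K$ and $d\alpha_K = -\omega_J$. Wedging with $\omega_A$ and summing, all off-diagonal cross terms $\omega_A\wedge\omega_B\wedge\alpha_C$ with $A\ne B$ cancel in pairs (two-forms commute), leaving
\begin{equation*}
  \sum_A \omega_A\wedge d\oma A = G\wedge\sum_A \alpha_A\wedge\omega_A - \alpha_I\wedge\Omega.
\end{equation*}
A parallel term-by-term check gives $d\vol_\alpha = G\wedge\alpha_{IJK} - \alpha_I\wedge\sum_A \oma A\wedge\omega_A$ (the three summands $-\omega_I\wedge\alpha_{IJK}$, $\omega_J\wedge\alpha_{0IJ}$, $\omega_K\wedge\alpha_{0IK}$ coming out of $d\vol_\alpha$ match one-for-one with the three summands of $-\alpha_I\wedge\sum_A\oma A\wedge\omega_A$). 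Assembling everything produces the stated formula for $d\Omega_W$ but with $G$ in place of $H$ in the last two coefficient slots.

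Third, apply the twist exterior derivative $d\beta_W \Hrel d\beta - a^{-1}F\wedge(X\hook\beta)$, valid for $X$-invariant $\beta$. One computes $X\hook\Omega_N$ using $X\hook\omega_A = \alpha_A$, $X\hook\oma A = \norm{X}^2\alpha_A$, $X\hook\vol_\alpha = \norm{X}^2\alpha_{IJK}$, and $\sum_A \alpha_A\wedge\oma A = 3\alpha_{IJK}$ (each summand is an even cyclic rearrangement of $\alpha_{IJK}$). Because $\omega_A$ has even degree, the derivation rule gives $X\hook(\omega_A\wedge\oma A) = \alpha_A\wedge\oma A + \omega_A\wedge\norm{X}^2\alpha_A$ (with a plus), whence
\begin{equation*}
  X\hook\Omega_N = 2f(f+h\norm{X}^2)\sum_A \alpha_A\wedge\omega_A + 6h(f+h\norm{X}^2)\alpha_{IJK}.
\end{equation*}
Wedging with $-a^{-1}F$ and adding to $d\Omega_N$, the $G$ and $F$ contributions merge into $2f[hG - a^{-1}(f+h\norm{X}^2)F] = 2fH$ and $6h[hG - a^{-1}(f+h\norm{X}^2)F] = 6hH$ at the two coefficient slots, matching the stated expression.

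\paragraph{Main obstacle.} The calculation is dense and the signs are delicate; the interior-product derivation sign noted above is critical, since an error there would turn $(f+h\norm{X}^2)$ into $(f-h\norm{X}^2)$ and spoil the identification with $H$. The two key calculational identities—pairwise cancellation of the off-diagonal cross terms in $\sum_A \omega_A\wedge d\oma A$, and the non-obvious identity $d\vol_\alpha - G\wedge\alpha_{IJK} = -\alpha_I\wedge\sum_A \oma A\wedge\omega_A$—are what make the six prescribed summands organise correctly, and verifying them is the bulk of the work.
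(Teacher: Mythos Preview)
Your argument is correct: the expansion of \( \Omega_N \), the identities for \( \sum_A\omega_A\wedge d\oma A \) and \( d\vol_\alpha \), and the contraction \( X\hook\Omega_N \) all check out, and the \( G \)- and \( F \)-contributions do assemble into \( H \) exactly as you describe (one minor wording point: what you call ``\( G \) in place of \( H \)'' is really \( hG \) in place of \( H \), but your final merging step handles this correctly).

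Your route differs from the paper's. The paper exploits that \( d_W \) is a derivation and writes
\[
  d_W\Omega_N = 2\sum_{A=I,J,K} d_W\omega^N_A \wedge \omega^N_A,
\]
computing \( d_W\omega^N_A \) for each \( A \) individually and recording the result as a matrix equation (their equation~(4.9)) before wedging and summing. You instead expand \( \Omega_N = f^2\Omega + 2fh\sum_A\omega_A\wedge\oma A + 6h^2\vol_\alpha \) once and for all, then compute \( d \) and \( X\hook \) on the three pieces separately. Your approach is arguably more direct for this proposition; the paper's has the advantage that the intermediate formula for \( d_W\omega^N_A \) is reused verbatim in \S4.2 to establish uniqueness in dimension~\( 8 \), where closedness of \( \Omega_W \) is not sufficient and one must work with the individual two-forms.
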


\begin{proof}
  If \( \gamma \in \Omega^p(M)\) is \( X \)-invariant, then the
  \Hrelated form \( \gamma^{}_W \) satisfies
  \begin{equation}
    \label{eq:dW}
    d\gamma^{}_W \Hrel d_W\gamma \coloneqq d\gamma - \frac1aF \wedge X
    \hook \gamma,
  \end{equation}
  see~\cite{Swann:twist}.  Note that \( d_W \) is a derivation on the
  graded algebra \( (\Omega^*(M),\wedge) \) of all forms, so we have
  \begin{equation}
    \label{eq:dOW}
    d\Omega_W \Hrel d_W\Omega_N = 2(d_W\omega^N_I \wedge \omega^N_I +
    d_W\omega^N_J \wedge \omega^N_J + d_W\omega^N_K \wedge \omega^N_K),
  \end{equation}
  even though \( \omega_J \) and \( \omega_K \) are not invariant.  We
  now compute
  \begin{equation*}
    \begin{split}
      d_W\omega^N_I
      &= d(f\omega_I + h\oma I) - \frac1aF\wedge X\hook(f\omega_I +
      h\oma I)\\
      &= df \wedge \omega_I + dh \wedge \oma I + h(d\alpha_0 \wedge
      \alpha_I + \alpha_J\wedge\omega_J + \alpha_K\wedge\omega_K)
      \eqbreak -\frac1a F \wedge(f\alpha_I + h\norm X^2\alpha_I) \\
      &= (df-h\alpha_I) \wedge \omega_I + dh \wedge \oma I +
      h(\alpha_J \wedge \omega_J + \alpha_K \wedge \omega_K) + H
      \wedge \alpha_I,
    \end{split}
  \end{equation*}
  with \( H \) as in~\eqref{eq:HGF}.  Similar computations lead to
  \begin{equation}
    \label{eq:dWomegaN}
    \begin{split}
      d_W
      \begin{pmatrix}
        \omega^N_I\\ \omega^N_J\\ \omega^N_K
      \end{pmatrix}
      &=
      \begin{pmatrix}
        df - h\alpha_I & h\alpha_J & h\alpha_K \\
        -h\alpha_J & df - h\alpha_I & -h\alpha_0 \\
        -h\alpha_K & h\alpha_0 & df - h\alpha_I
      \end{pmatrix}
      \wedge
      \begin{pmatrix}
        \omega_I \\ \omega_J \\ \omega_K
      \end{pmatrix}
      \eqbreak[4]
      + dh \wedge
      \begin{pmatrix}
        \oma I \\ \oma J \\ \oma K
      \end{pmatrix}
      + H \wedge
      \begin{pmatrix}
        \alpha_I \\ \alpha_J \\ \alpha_K
      \end{pmatrix}
      .
    \end{split}
  \end{equation}
  Combining these formulae with~\eqref{eq:dOW} gives the desired
  result.
\end{proof}

We may now confirm that Theorem~\ref{thm:qK12} does indeed give quaternionic
Kähler twists.

\begin{lemma}
  The data of Theorem~\ref{thm:qK12} gives \( d_W\Omega_N = 0 \).
\end{lemma}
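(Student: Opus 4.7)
The plan is to substitute the specified data directly into Proposition~\ref{prop:Omega-W} and verify that every summand in the expansion of $d_W\Omega_N$ either vanishes by itself or cancels against another. Two observations drive every cancellation: first, since $\mu$ is a Kähler moment map, $d\mu = X\hook\omega_I = \alpha_I$; second, the component $G$ appearing in~\eqref{eq:d0} satisfies $G = d\alpha_0 + \omega_I = dX^\flat + \omega_I$, so the prescribed curvature form is simply $F = kG$.

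I would begin by computing the differentials of the scalar data. Because $f$ and $h$ are functions of $\mu$ only, both $df$ and $dh$ are scalar multiples of $\alpha_I$; from $f = B/(\mu-c)$ and $h = -B/(\mu-c)^2$ one reads off
\begin{equation*}
  df = h\,\alpha_I, \qquad dh = -\frac{2h}{\mu-c}\,\alpha_I.
\end{equation*}
Consequently $d(h^2) = 2h\,dh$ is again a multiple of $\alpha_I$, so $d(h^2) \wedge \vol_\alpha = 0$ because $\vol_\alpha = \alpha_{0IJK}$ already contains $\alpha_I$. The two remaining scalar identities forced by Proposition~\ref{prop:Omega-W} follow from the same one-variable calculus: $d(f^2) = 2f\,df = 2fh\,\alpha_I$, which exactly cancels the term $-2fh\,\alpha_I \wedge \Omega$, while $d(fh) = d(-B^2/(\mu-c)^3) = 3h^2\,\alpha_I$ makes the bracketed coefficient $d(fh) - 3h^2\alpha_I$ vanish identically.

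The only remaining task, and the central point of the argument, is to show that the two-form $H$ of~\eqref{eq:HGF} vanishes, since this simultaneously eliminates both $H$-dependent summands in Proposition~\ref{prop:Omega-W}. Substituting $F = kG$ and $a = k(\norm{X}^2 - \mu + c)$ into~\eqref{eq:HGF} reduces the claim $H = 0$ to the scalar identity $h(\norm{X}^2 - \mu + c) = f + h\norm{X}^2$, equivalently $f = -h(\mu-c)$, which is immediate from the definitions. I expect this identification of the prescribed $F$ with $kG$ to be the only substantive step; everything else is bookkeeping around the one-variable function $\mu \mapsto B/(\mu-c)$.
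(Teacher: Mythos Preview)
Your proposal is correct and follows essentially the same route as the paper: identify \( F = kG \), verify \( H = 0 \) via the scalar identity \( f = -h(\mu-c) \), and then check that the differentials \( d(f^2) \), \( d(fh) \), \( d(h^2) \) produce the required cancellations in Proposition~\ref{prop:Omega-W}. The paper merely reverses the order, handling \( H = 0 \) first, and phrases the key identity slightly differently (using \( f' = -f^2/B \) and \( -\mu + c = -B/f \)); your direct verification of \( f = -h(\mu-c) \) is, if anything, a touch cleaner.
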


\begin{proof}
  Note that \( -\mu + c = -B/f \) and that \( f' = - f^2/B \).  Also
  we have \( F = kG \), so
  equation~\eqref{eq:HGF} becomes
  \begin{equation*}
    \begin{split}
      H
      &= f'G - \frac{f+f'\norm X^2}{\norm X^2 - \mu + c} G \\
      &= \frac{f'(\norm X^2 - B/f) - f - f'\norm X^2}{\norm X^2 - \mu
      + c}F = 0.
    \end{split}
  \end{equation*}
  Now \( d(h^2)\wedge\vol_\alpha = 2hh'\alpha_I\wedge\vol_\alpha = 0
  \), \( d(f^2)=2ff'\alpha_I =2fh\alpha_I \) and \( d(fh) =
  -d(B^2/(\mu -c)^3) = 3B^2/(\mu-c)^4 \,\alpha_I = 3h^2\alpha_I \), so
  \( d_W\Omega_N \) is indeed zero.
\end{proof}

The proof of the uniqueness part of Theorem~\ref{thm:qK12} now proceeds by
decomposing \( d_W\Omega_N = 0 \) in to type components corresponding
to the splitting \( TM = \bH X + \mathscr B \) with \( \mathscr B =
(\bH X)^\bot \).  We call elements of \(
\Span{\alpha_0,\alpha_I,\alpha_J,\alpha_K} \subset T^*M \)
\enquote{type \( (1,0) \)}, and elements in the orthogonal complement
\( T^*\mathscr B \) \enquote{type \( (0,1) \)}.  This induces a type
decomposition of each \( \Lambda^kT^*M \).  In particular, \(
d_W\Omega_N \in \Omega^5(M) \) splits into five components.  Note that
\( \omega \) is type \( (2,0) + (0,2) \), and that \( \Omega \) is
type \( (4,0) + (2,2) + (0,4) \).

Firstly the type \( (0,5) \) part gives
\begin{equation*}
  d(f^2)^{(0,1)} \wedge \Omega^{(0,4)} = 0.
\end{equation*}
The form \( \Omega^{(0,4)} \) is a quaternionic form on \( \mathscr B
\), and so the Lapage like map \( {\Omega^{(0,4)}\wedge\cdot} \colon
\Lambda^k\mathscr B^* \to \Lambda^{k+4}\mathscr B^* \) is injective
for each \( k \leqslant \tfrac12\dim\mathscr B - 2 \) by
Bonan~\cite{Bonan:qH1}.  We conclude that \( d(f^2)^{(0,1)} = 0 \), so
\( d(f^2) \) is type \( (1,0) \).

Now the type \( (1,4) \) part of \( d_W\Omega_N = 0 \) is 
\begin{equation}
  \label{eq:type-1-4}
  (d(f^2) - 2fh\alpha_I) \wedge \Omega^{(0,4)} + 2fH^{(0,2)} \wedge
  \sum_{A=I,J,K} \alpha_A \wedge \omega_A^{(0,2)} = 0.
\end{equation}
This gives directly that \( d(f^2) = \sum_{A=I,J,K} f_A\alpha_A \)
with no \( \alpha_0 \)-component.  Now considering the \( (0,2)
\)-component of 
\begin{equation*}
  0 = d^2(f^2) = \sum_{A=I,J,K} df_A\wedge\alpha_A + f_J\omega_K -
  f_K\omega_J 
\end{equation*}
we see that \( f_J = 0 = f_K \) and so \( d(f^2) = f_I\alpha_I \) with
\( df_I\wedge\alpha_I = 0 \).  Since \( \mu \) is a moment map for the
action of \( X \) with respect to \( \omega_I \), we have \( \alpha_I
= X \hook \omega_I = d\mu \).  We conclude that \( f = f(\mu) \) and
so \( d(f^2) = 2f f'\alpha_I \), where \( {}' \) denotes the
derivative with respect to \( \mu \).

Considering the coefficient of \( \alpha_J \) in \eqref{eq:type-1-4}
we have \( f H^{(0,2)} \wedge \omega_J^{(0,2)} = 0 \), implying that
\( H^{(0,2)} = 0 \).  Now the \( \alpha_I \)-component of
\eqref{eq:type-1-4} reads \( 2f(f'-h) \wedge \Omega^{(0,4)} = 0 \),
giving \( h = f' \).  In particular \( d(h^2)\wedge\vol_\alpha =
2f''f'\alpha_I\wedge\vol_\alpha = 0 \).

Using this information, we have
\begin{equation}
  \label{eq:dWO2}
  \begin{split}
    0 = d_W\Omega_N &= \bigl((f^2)'' - 6{f'}^2\bigr)\alpha_I \wedge
    \sum_{A=I,J,K} \oma A \wedge \omega_A \eqbreak + 2f H\wedge \sum_{A=I,J,K}
    \alpha_A \wedge \omega_A + 6f' H\wedge \alpha_{IJK},
  \end{split}
\end{equation}
with \( H^{(0,2)} = 0 \).  Taking the \( (2,3) \)-part of this
equation gives
\begin{equation}
  \label{eq:type-2-3}
  2fH^{(1,1)} \wedge (\alpha_I \wedge \omega_I^{(0,2)} + \alpha_J
  \wedge \omega_J^{(0,2)} + \alpha_K \wedge \omega_K^{(0,2)})
\end{equation}
Writing \( H^{(1,1)} = \sum_{i=0,I,J,K} \alpha_i \wedge H_i \), with
\( H_i \) of type \( (0,1) \), equation~\eqref{eq:type-2-3} becomes
\begin{equation*}
  H_0 \wedge \omega_A^{(0,2)} = 0\quad\text{and}\quad
  H_A \wedge \omega_B^{(0,2)} = H_B \wedge
  \omega_A^{(0,2)},
\end{equation*}
for \( A,B=I,J,K \).  As \( \dim\mathscr B \geqslant 8 \), we
conclude that \( H^{(1,1)} = 0 \).

We now have that \( H \) is type \( (2,0) \), so \(
H\wedge\alpha_{IJK} = 0 \) and the remaining terms
in \( d_W\Omega_N \) are all of type \( (3,2) \).  Write \( H =
\sum_{0\leqslant i<j\leqslant K} H_{ij}  \alpha_i \wedge \alpha_j \).
The coefficient of \( \omega_I^{(0,2)} \) in~\eqref{eq:dWO2} is
\begin{equation*}
  ((f^2)'' - 6{f'}^2)\alpha_{IJK} + 2f(H_{JK}\alpha_{IJK} +
  H_{0J}\alpha_{0JI} + H_{0K}\alpha_{0KI}) = 0,
\end{equation*}
so \( H_{0J} = 0 = H_{0K} \) and \( H_{JK} = (6{f'}^2-(f^2)'')/2f \).
On the other hand, the coefficient of \( \omega_J^{(0,2)} \) is
\begin{equation*}
  -((f^2)'' - 6{f'}^2)\alpha_{0IJ} + 2f(H_{IK}\alpha_{IKJ} +
  H_{0I}\alpha_{0IJ}) = 0, 
\end{equation*}
giving \( H_{IK} = 0 \) and \( H_{0I} = - H_{JK} \).  Finally, the
coefficient of \( \omega_K^{(0,2)} \) leads to \( H_{IJ} = 0 \).  Thus
\begin{equation}
  \label{eq:Hf}
  H = \frac1{2f}((f^2)'' - 6{f'}^2)\omam I,
\end{equation}
where \( \omam I \coloneqq \alpha_{0I} - \alpha_{JK}\).  Using the
definition~\eqref{eq:HGF} of \( H \) shows that
\begin{equation}
  \label{eq:Ff}
  F = \frac a{f+f'\norm X^2}\Bigl(f'G -
  \frac1{2f}((f^2)''-6{f'}^2)\omam I\Bigr). 
\end{equation}
Note that non-degeneracy of \( g_N \) ensures that \( (f+f'\norm X^2)
= (f+h\norm X^2) \) and \( f \) are non-zero.

The two-form \( F \) in~\eqref{eq:Ff} needs to be closed and to
satisfy~\eqref{eq:da}.  To evaluate these conditions,
introduce the function \( R = f'/f \).  Then \( R' = f''/f -
{f'}^2/f^2 \), \( dR = R'\alpha_I \) and
\begin{equation}
  \label{eq:FR}
  F = \frac a{1+R\norm X^2}(RG + (R^2-R')\omam I).
\end{equation}
Note that \( G = dX^\flat + \omega_I \) and that \( d\norm X^2 = -
X\hook dX^\flat \).  Equation~\eqref{eq:FR} then implies that
\begin{equation*}
  \begin{split}
    X \hook F
    &= \frac a{1+R\norm X^2}(R(-d\norm X^2+\alpha_I) + \norm X^2
    (R^2-R')\alpha_I) \\ 
    &= \frac a{1+R\norm X^2}(-d(1+R\norm X^2) + R(1+R\norm
    X^2)\alpha_I) \\
    &= a\, (-d\log(1+R\norm X^2) + R\alpha_I)
    = a\, (d\log f - d\log(1+R\norm X^2)) \\
    &= - a\,\, d\log((1+R\norm X^2)/f),
  \end{split}
\end{equation*}
Thus equation~\eqref{eq:da} gives \( d \log a = d \log((1+R\norm
X^2)/f) \), since the twist construction requires \( a \) to be
non-zero.  We conclude that
\begin{equation*}
  a = \frac {k_0}f(1+R\norm X^2)
\end{equation*}
for some non-zero constant~\( k_0 \).

It remains to determine when \( F \) is closed.  When \( k_0 \ne 0 \),
we substitute the expression for \( a \) into \eqref{eq:Ff}.  This
gives
\begin{equation*}
  \begin{split}
    0
    &= \frac1{k_0} dF
    =  d\Bigl(\frac 1f(RG + (R^2-R')\omam
    I)\Bigr) \\
    &= -\frac Rf\alpha_I\wedge (RG+(R^2-R')\omam I)
    + \frac1f(R'\alpha_I\wedge G - (R^2-R')'\alpha_{IJK}) \eqbreak +
    \frac1f(R^2-R')((G-\omega_I)\wedge\alpha_I - \omega_K\wedge\alpha_K
    - \alpha_J\wedge\omega_J) \\
    &= \frac1f (R(R^2-R') - (R^2-R')')\alpha_{IJK} -
    \frac1f(R^2-R')\sum_{A=I,J,K}\alpha_A\wedge\omega_A,
  \end{split}
\end{equation*}
since the coefficient of \( G\wedge\alpha_I \) sums to zero.  Taking
the \( (1,2) \) component, we see that \( R^2-R' = 0 \) and that this
is the only equation that needs to be satisfied.  But this gives
either \( R = 0 \), so \( f' = 0 \), \( F = 0 \) and the twist is
trivial, or \( R = 1/(-\mu + c) \) and \( f = B/(\mu - c) \) for
constants \( c \) and~\( B \).  This latter case then has \( h = f' =
-B/(\mu - c)^2 \) and \( a = k (\norm X^2 - \mu + c) \), with \( k =
-k_0/B \).  Finally we see \( F = (Ra/(1+R\norm X^2))G = kG =
k(dX^\flat + \omega_I) \) and we have the claimed result.

\subsection{Uniqueness in dimension eight}
\label{sec:uniq-dimens-eight}

Here we extend Theorem~\ref{thm:qK12} to manifolds of dimension~\( 8 \).  By
Corollary~\ref{cor:all}, we know that the given twist data does lead to a
quaternionic Kähler metric.  It thus remains to prove that this twist
data is unique.

Recall \cite{Swann:MathAnn} that an almost quaternion Hermitian
structure \( (W,g_W,\Omega_W) \) in dimension \( 8 \) is quaternionic
Kähler if and only if its fundamental four-form is closed and \(
d(S^2H) \subset T^*W\wedge S^2H \subset \Omega^3(W) \).  The latter
condition is the requirement that locally the Hermitian forms \(
\omega^W_I,\omega^W_J,\omega^W_K \) associated to the compatible local
almost complex structures generate a differential ideal.  With respect
to such a local triple of Hermitian forms the quaternionic Kähler
condition in all dimensions at least \( 8 \) is equivalent to the
existence of a local one-form \( \sigma = (\sigma_{AB}) \in
\Omega^1(U,\so(3)) \) such that \( \omega^W = ( \omega^W_I,
\omega^W_J, \omega^W_K)^T \)
\begin{equation}
  \label{eq:qK}
  d\omega^W = \sigma \wedge \omega^W.
\end{equation}

Our problem with computing the derivatives of \( \omega^W_A \) is that
they do not correspond to invariant forms on~\( M \), and so we can
not directly use the formulae of \cite{Swann:twist}.  To resolve this
first note that it is sufficient to work away from the fixed point set
of \( X \): this is a totally geodesic submanifold of codimension at
least two, so its complement is open and dense and the complement
corresponds to an open dense subset of the twist.  

Let \( \theta(t,q) = \theta_t(q) \), for \( t\in\bR \) and \( q\in M
\), be the one-parameter group generated by~\( X \).  On \( M \), we
have \( \theta_t^*\omega_J = \cos(t)\omega_J + \sin(t)\omega_K \) and
\( \theta_t^*\omega_K = -\sin(t)\omega_J + \cos(t)\omega_K \).

In a neighbourhood of a point~\( p \) outside of the fixed-point set,
we may choose a slice \( \mathscr S_p \) transverse to the orbits
of~\( X \) and an open neighbourhood \( U_p \) of \( \{0\} \times
\mathscr S_p \) in \( \bR \times \mathscr S_p \) such that \( \theta
\colon U_p \to M \) is an embedding with image an open set \( B_p \).
On \( B_p \), we define invariant forms \( \omega_\tJ \), \(
\omega_\tK \) as the translates under \( \theta_t \) of \( \omega_J \)
and \( \omega_K \) over \( \mathscr S_p \).  We have
\begin{equation*}
  \omega_\tJ = u\omega_J + v\omega_K,\quad \omega_\tK
  = -v\omega_J + u\omega_K 
\end{equation*}
for some functions \( u,v \in C^\infty(B_p) \) satisfying \( u^2+v^2=1
\).  Putting
\begin{equation*}
  \gamma =
  \begin{pmatrix}
    1&0&0\\
    0&u&v\\
    0&-v&u
  \end{pmatrix}
\end{equation*}
and \( \tilde\omega = (\omega_I,\omega_\tJ,\omega_\tK)^T \), we write
\( \tilde\omega^N = \gamma \omega^N \) and define \( \tilde\omega^W \)
by \( \tilde\omega^W \Hrel \tilde\omega^N \).  Now \(
d_W\tilde\omega^N = d\gamma\wedge \omega^N + \gamma\, d_W\omega^N \)
and it follows that \( \tilde \omega^W \) satisfies the quaternionic
Kähler condition~\eqref{eq:qK} \( d\tilde\omega^W = \tilde\sigma
\wedge \tilde\omega^W \) if and only if
\begin{equation}
  \label{eq:dWomegaNq}
  d_W\omega^N = \sigma^N\wedge\omega^N
\end{equation}
for a \( \sigma^N \in \Omega^1(B_p,\so(3)) \).  The \( \so(3)
\)-connections \( \sigma^N \) and \( \tilde\sigma^W \) satisfy the
gauge type relation \( \tilde\sigma^W \Hrel \gamma\sigma^N\gamma^{-1} -
(d\gamma)\gamma^{-1} \).

Now to solve~\eqref{eq:dWomegaNq}, we use~\eqref{eq:dWomegaN}.  As in
the proof of Theorem~\ref{thm:qK12}, we consider the components of
\eqref{eq:qK} according to their types with respect to the splitting
of \( \Lambda^kT^*M \) induced by \( \Span{\alpha_0,\dots,\alpha_K}
\subset T^*M \) and its orthogonal complement.

Write \( V = \norm X^{-2} \), so that \( {\omega^N} = f\omega^\beta +
(Vf+h)\omega^\alpha \).  Then, the \( (2,1) \)-component
of~\eqref{eq:dWomegaN} gives
\begin{equation*}
  (Vd^{(0,1)}f + d^{(0,1)}h)\wedge \oma A + H^{(1,1)}\wedge \alpha_A =
  \sum_{B=I,J,K}(\sigma^N_{AB})^{(0,1)} \wedge (Vf+h) \oma B
\end{equation*}
for each \( A=I,J,K \).  As each non-zero element in the span of \(
\oma I \), \( \oma J \) and \( \oma K \) is non-degenerate, it follows, that 
\begin{equation}
  \label{eq:8H11}
  H^{(1,1)} = 0.
\end{equation}
Now the fact that \( \sigma^N \) is skew-symmetric yields
\begin{equation}
  \label{eq:801}
  (\sigma^N)^{(0,1)} = 0 \quad\text{and}\quad
  Vd^{(0,1)}f+d^{(0,1)}h = 0. 
\end{equation}

Now consider the \( (1,2) \)-component of~\eqref{eq:dWomegaNq}.  We
have immediately that \( H^{(0,2)} = H^{02}_I\wedge\omb I +
H^{02}_J\wedge\omb J + H^{02}_K\wedge\omb K \) for some \( H^{02}_A
\).  Considering the coefficients of~\( \omb A \) and putting \(
\delta = d^{(1,0)}f-h\alpha_I \), the equations then give
\begin{gather*}
  \begin{pmatrix}
    \delta +  H^{02}_I\alpha_I & h\alpha_J + H^{02}_J\alpha_I
    & h\alpha_K + H^{02}_K\alpha_I\\
    -h\alpha_J + H^{02}_I\alpha_J & \delta + H^{02}_J\alpha_J
    & -h\alpha_0 + H^{02}_K\alpha_J\\
    -h\alpha_K + H^{02}_I\alpha_K & h\alpha_0 + H^{02}_J\alpha_K
    & \delta + H^{02}_K\alpha_K
  \end{pmatrix}
  =f\sigma^N.
\end{gather*}
The off-diagonal terms and the skew-symmetry of \( \sigma^N \) imply
that \( H^{(0,2)} = 0 \).  It then follows that \( \delta = 0 \),
i.e.,
\begin{equation}
  \label{eq:8d10f}
  d^{(1,0)}f = h\alpha_I,
\end{equation}
and that
\begin{equation}
  \label{eq:8sN}
  f\sigma^N_{IJ} = h\alpha_J,\quad f\sigma^N_{IK} = h\alpha_K
  \quad\text{and}\quad f\sigma^N_{JK} = -h\alpha_0.
\end{equation}
Using \( H^{(0,2)} = 0 \) and \( (\sigma^N)^{(0,1)}=0 \), the \( (0,3)
\)-component of~\eqref{eq:dWomegaNq} implies \( d^{(0,1)}f = 0 \).
Together with \eqref{eq:8d10f}, we thus have \( df = h\alpha_I \), so
\( f = f(\mu) \) and \( h = f' \).

Finally, all that remains of~\eqref{eq:dWomegaNq} is the \( (3,0)
\)-component.  This reduces to
\begin{equation*}
  h'\alpha_I\wedge\omega^\alpha + H\wedge\alpha = h\sigma^N\wedge\omega^\alpha.
\end{equation*}
Multiplying through by~\( f \), we use~\eqref{eq:8sN} to get
\begin{gather*}
  fH\wedge\alpha_I = (2h^2-fh')\alpha_{IJK},\quad
  fH\wedge\alpha_J = -(2h^2-fh')\alpha_{0IJ},\\
  fH\wedge\alpha_K = -(2h^2-fh')\alpha_{0IK}.
\end{gather*}
The first of these equations implies that \( fH =
(2h^2-fh')\alpha_{JK} +
\alpha_I\wedge(\lambda_0\alpha_0+\lambda_J\alpha_J+\lambda_K\alpha_K)
\), the second and third equations then give \( \lambda_0 = 2h^2-fh'
\), \( \lambda_J = 0 = \lambda_K \).  We conclude that \( fH =
-(2h^2-fh')(\alpha_{0I}-\alpha_{JK}) \).  Substituting \( h = f' \),
we see that \( H \) is given by equation~\eqref{eq:Hf} as in the
higher-dimensional case.  The arguments following~\eqref{eq:Hf}, then
provide the claimed uniqueness in dimension~\( 8 \).

\section{Geometry of the twist}
\label{sec:geometry-twist}

Let us start by specialising the above results when the hyperKähler
manifold is the image of the rigid c-map for \( C \) conic special
Kähler.  By Alekseevsky et al.\ \cite{Alekseevsky-CDM:qK-special} and
the remarks after Corollary~\ref{cor:all}, we now know this gives the
c-map and its one loop deformations via~Figure~\ref{fig:spaces}.  We
wish to show how the properties listed by Ferrara and Sabharwal
\cite{Ferrara-S:q} may be obtained from the twist picture and describe
some global aspects.

Let \( X \) be the conic isometry, and put \( H = T^*C \).  By
Proposition~\ref{prop:tilde-X-JK}, we have that the horizontal lift \( \hX \) is
a hyperKähler isometry of \( H \) rotating \( J \) and~\( K \).

\begin{lemma}
  \label{lem:cone-twist}
  The twist data for the symmetry \( \hX \) of \( H = T^*C \) is
  \begin{equation*}
    F = \tfrac 12k(\alpha \wedge \Gi \alpha^T + \theta^T \wedge \Gi
    \theta), \qquad
    a = k(\mu + c), \qquad \mu = \tfrac12 \norm{\hX}^2.
  \end{equation*}
  Moreover, \( F \)~is exact.
\end{lemma}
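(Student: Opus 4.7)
The plan is to specialise Theorem~\ref{thm:qK12} to the rotating isometry $\hX$ on $H = T^*C$ and then exhibit an explicit primitive for the resulting curvature form $F$.

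First I would identify the three ingredients appearing in Theorem~\ref{thm:qK12}: $\norm{\hX}^2$, the Kähler moment map~$\mu$, and $d\hX^\flat$. Since $\hX$ is horizontal we have $\alpha(\hX)=0$, while on the horizontal distribution $\hor = \ker\alpha$ the hyperKähler metric coincides with the pullback of~$g_C$ along $\pi\colon H\to C$; hence $\norm{\hX}^2 = g_C(X,X)\circ\pi$. Rewriting $\omega_I = \tfrac12 \alpha\wedge\Gi\alpha^T + \pi^*\omega_C$ and using $\alpha(\hX)=0$ together with the moment-map identity $d\mu_C = X\hook\omega_C$ from Section~\ref{sec:conic-special-kahler} yields $\hX\hook\omega_I = d(\pi^*\mu_C)$. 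So $\mu = \pi^*\mu_C = \tfrac12 \norm{\hX}^2$, and the formula $a = k(\norm X^2 - \mu + c)$ of Theorem~\ref{thm:qK12} collapses to $a = k(\mu + c)$.

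For $F = k(d\hX^\flat + \omega_I)$, horizontality of $\hX$ also identifies $\hX^\flat = \pi^*X^\flat_C$, and a short two-line computation from the conic condition $\LC X = -I$ gives $dX^\flat_C = -2\omega_C$. Pulling back yields $d\hX^\flat = -2\pi^*\omega_C = \theta^T\wedge\Gi\theta$, and substituting this together with the expression for $\omega_I$ produces the announced $F = \tfrac12 k(\alpha\wedge\Gi\alpha^T + \theta^T\wedge\Gi\theta)$.

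For exactness, $\theta^T\wedge\Gi\theta = d\hX^\flat$ is already exact, so the task reduces to finding a primitive for $\alpha\wedge\Gi\alpha^T$. The natural candidate is $x\Gi\alpha^T$: differentiating and using $dx = \alpha + x\oN$, flatness $d\alpha = -\alpha\wedge\oN$ (a consequence of $\ON = 0$) and the symplectic identity~\eqref{eq:oN-G} in the form $\oN\Gi + \Gi\oN^T = 0$, a short calculation yields $d(x\Gi\alpha^T) = \alpha\wedge\Gi\alpha^T$. Consequently $F = \tfrac12 k\,d(\hX^\flat + x\Gi\alpha^T)$. The only real obstacle is careful bookkeeping: several of the forms are defined on $\GL(C)$ or on $\Un(C)\times(\bR^{2m})^*$ and must be understood as pulled back to $H$, and the sign conventions for $\omega_I$, for the moment map, and for $\LC X$ must be reconciled. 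Given the machinery already assembled in Sections~\ref{sec:rigid-c-map} and~\ref{sec:conic-special-kahler}, each individual step is a one- or two-line computation.
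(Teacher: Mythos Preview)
Your argument is correct and reaches the same conclusion, but it proceeds along a different line from the paper's proof.  The paper works entirely in the frame-bundle formalism: it writes $\alpha_0 = \hX^\flat = \chi^T\G\theta$ with $\chi = \theta(\hX)$, uses the identity $d\chi = -\ii\theta - \oN\chi$ established in Proposition~\ref{prop:tilde-X-JK} together with relation~\eqref{eq:oN-G-th} to obtain $d\alpha_0 = \theta^T\wedge\Gi\theta$, and then repeats the manoeuvre to compute $d\norm{\hX}^2$ and identify the moment map.  You instead exploit the bundle structure $\pi\colon H\to C$ more directly: horizontality of $\hX$ together with $g_H$-orthogonality of $\mathcal V$ and $\mathcal H$ lets you identify $\hX^\flat$, $\norm{\hX}^2$ and $\mu$ as pullbacks of the corresponding objects on~$C$, after which the conic equation $\LC X = -I$ yields $dX^\flat_C = -2\omega_C$ in one line.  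Your route is more geometric and avoids the $\chi$-calculus entirely; the paper's route is more self-contained within the frame-bundle language already set up in~\S\ref{sec:rigid-c-map}.  One point worth making explicit in your write-up: the identification $\hX^\flat = \pi^*X^\flat_C$ does require $\mathcal V\perp_{g_H}\mathcal H$, which follows from~\eqref{eq:oI} once one notes that $I$ preserves the splitting~\eqref{eq:V-H}.  The exactness argument is essentially the same in both versions, with your primitive $x\Gi\alpha^T$ differing only in sign from the paper's $\alpha\Gi x^T$.
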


\begin{proof}
  We first compute \( d\hX^\flat \).  In the notation of
  Proposition~\ref{prop:tilde-X-JK}, we have \( \alpha_0 = \hX^\flat = \hX \hook
  \theta^T\G\theta = \chi^T\G\theta \).  Thus
  \begin{equation}
    \label{eq:dX-flat}
    \begin{split}
      d\alpha_0
      &= (d\chi)^T\wedge\G \theta + \chi^T\G d\theta 
      = (-\ii\theta - \oN\chi)^T \wedge \G\theta -
      \chi^T\G\oN\wedge\theta \\
      &= \theta^T \wedge\ii\G \theta - \chi^T(\oN^T\G + \G\oN)\wedge\theta \\
      &= \theta^T \wedge\Gi \theta
    \end{split}
  \end{equation}
  by Proposition~\ref{prop:tilde-X-JK} and~\eqref{eq:oN-G-th}.  As \( F =
  k(d\hX^\flat + \omega_I) \), the claimed expression for \( F \)
  follows from~\eqref{eq:oI}: \( \omega_I =
  \tfrac12(\alpha\wedge\Gi\alpha^T - \theta^T\wedge\Gi\theta) \).  To
  check the formula for~\( \mu \), we wish to show that \( d\mu =
  \hX \hook \omega_I \).   We compute
  \begin{equation*}
    \begin{split}
      d \norm{\hX}^2
      &= d\{(\theta^T\G\theta)(\hX,\hX)\}
      = d\{\chi^T\G\chi\} \\
      &= (-\ii\theta-\oN\chi)^T\G\chi + \chi^T\G(-\ii\theta-\oN\chi)\\
      &= \hX \hook (- \theta^T\wedge\Gi \theta) + \hX \hook
      \chi^T\{(\oN^T\G + \G\oN)\wedge\theta\} \\
      &= 2\hX \hook \omega_I,
    \end{split}
  \end{equation*}
  where we again have used~\eqref{eq:oN-G-th}.  It follows that \( \mu
  = \tfrac12\norm{\hX}^2 \), as claimed, and the expression for \( a
  \) is now obtained from \( a = k(\norm{\hX}^2 - \mu + c) \).

  To show that \( F \) is exact, it is enough to show \(
  \alpha\wedge\Gi\alpha^T \) is exact, since \(
  \theta^T\wedge\Gi\theta = d\alpha_0 \).  But \( d(\alpha\Gi x^T) =
  -\alpha\wedge\oN\Gi x^T - \alpha\Gi dx^T = \alpha \wedge \Gi
  (\oN^Tx^T-dx^T) = \alpha \wedge \Gi\alpha^T \), by~\eqref{eq:oN-G},
  giving \( F = d(\tfrac12k(\alpha\Gi x^T + \alpha_0)) \) as desired.
\end{proof}

We now wish to describe the geometric properties of the quaternionic
Kähler manifolds~\( Q_c \) that we obtain from the above twist
construction.  We put \( Q = Q_0 \).

\begin{lemma}
  For general \( c \), the metric \( g_N \) is
  \begin{equation*}
    g_N = \frac B{\mu+c}\Bigl(g_\bot - \frac {\mu-c}{\mu+c}\,g_{\bH X}\Bigr)
  \end{equation*}
  where \( B \)~is an arbitrary constant, \( g_{\bH X} \) is the
  restriction of \( g \) to \( \bH X \subset TH \), and \( g_\bot \)
  is the restriction to \( (\bH X)^\bot \).  

  In particular, for \( c=0 \), the metric \( g_N \) is
  \begin{equation*}
    g_N = \frac B\mu(g_\bot - g_{\bH X}).
  \end{equation*}
\end{lemma}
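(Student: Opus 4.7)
The plan is a direct calculation that exploits the orthogonal decomposition $TH = \bH\hX \oplus \mathscr B$ with $\mathscr B = (\bH\hX)^\bot$, under which the hyperKähler metric splits as $g = g_{\bH X} + g_\bot$.

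First I would establish the pointwise identity
\begin{equation*}
  \alpha_0^2 + \alpha_I^2 + \alpha_J^2 + \alpha_K^2 = \norm{\hX}^2 \, g_{\bH X}.
\end{equation*}
Each one-form $\alpha_A = g(A\hX,\cdot)$, for $A = \Id, I, J, K$, vanishes on $\mathscr B$ by orthogonality, so both sides of the identity are zero on $\mathscr B$. On $\bH\hX$ the vectors $\hX, I\hX, J\hX, K\hX$ form an orthogonal frame with common squared norm $\norm{\hX}^2$ (orthogonality following from skew-symmetry of the $\omega_A$ combined with the quaternionic identities $IJ=K$ and cyclic permutations), so $\alpha_A(B\hX) = \norm{\hX}^2\delta_{AB}$, from which the identity is immediate.

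Substituting into the definition $g_N = fg + h\sum_A \alpha_A^2$ and using $g = g_{\bH X} + g_\bot$ gives the cleaner expression
\begin{equation*}
  g_N = f\,g_\bot + \bigl(f + h\,\norm{\hX}^2\bigr)\,g_{\bH X}.
\end{equation*}
At this point I would invoke Lemma~\ref{lem:cone-twist} to replace $\norm{\hX}^2$ by $2\mu$ and plug in the explicit $f$ and $h$ from Theorem~\ref{thm:qK12}. The single rational simplification
\begin{equation*}
  f + 2\mu h = \frac{B}{\mu+c} - \frac{2\mu B}{(\mu+c)^2} = -\frac{B(\mu-c)}{(\mu+c)^2}
\end{equation*}
combined with the displayed expression for $g_N$ then yields the stated formula, and the $c=0$ case is an immediate specialisation.

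The only step with genuine geometric content is the identity $\sum_A \alpha_A^2 = \norm{\hX}^2\, g_{\bH X}$; everything subsequent is bookkeeping. I anticipate no real obstacle, the main risk being merely the careful tracking of signs and of the factor of $2$ between $\norm{\hX}^2$ and $\mu$ in the final algebraic step.
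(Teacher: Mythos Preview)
Your proposal is correct and follows essentially the same route as the paper: the paper writes \( g = g_\bot + g_{\bH X} = g_\bot + \tfrac1{2\mu}g_\alpha \) (which is your identity \( \sum_A\alpha_A^2 = \norm{\hX}^2 g_{\bH X} \) rewritten using \( \norm{\hX}^2 = 2\mu \)) and then substitutes the values of \( f \) and \( h \). The only cosmetic discrepancy is that the \( c \) here carries the opposite sign convention to that in Theorem~\ref{thm:qK12}, which you have silently absorbed; since \( c \) is an arbitrary constant this is harmless.
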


\begin{proof}
  By Theorem~\ref{thm:qK12}, \( g_N = fg + hg_\alpha \), where \( g_\alpha =
  \alpha_0^2 + \alpha_I^2 + \alpha_J^2 + \alpha_K^2 \).  As \( g =
  g_\bot + g_{\bH X} = g_\bot + \tfrac1{2\mu} g_\alpha\), the
  relations \( f = B/(\mu+c) \) and \( h = -B/(\mu+c)^2 \) imply the
  claimed result.
\end{proof}

Since \( g_{\bH X}/\mu \) is positive definite, it is natural to take
\begin{equation*}
  B = -1
\end{equation*}
when \( c=0 \).  In all cases, we may take \( k=1 \), since \( F \)
and \( a \) only occur as the combination \( \tfrac1a F \), and so the
\( k \)'s cancel.  In fact, this choice is good topologically.

\begin{proposition}
  \label{prop:topology}
  Let \( k=1 \) in~Lemma~\ref{lem:cone-twist}.  For \( c=0 \), the twist of
  \( H = T^*C \) is diffeomorphic to the product \( (H/\Span{\hX})
  \times S^1 \) as diffeological spaces.  
\end{proposition}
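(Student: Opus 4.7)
The plan is to exploit the exactness of $F$ from Lemma~\ref{lem:cone-twist} to produce a global trivialization of the twist bundle $P \to H$ in which the twisted vector field $X'$ decouples as $\hX$ on the $H$ factor and zero on the $S^1$ factor.

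First I would recall the twist construction: $W = P/\langle X' \rangle$, where $\pi \colon P \to H$ is a principal $S^1$-bundle with connection $\theta_P$ of curvature $F$, $Y$ generates the principal action, $\hat X$ is the $\theta_P$-horizontal lift of $\hX$, and $X' = \hat X + aY$. By Lemma~\ref{lem:cone-twist}, $F = d\beta$ is exact with
\begin{equation*}
  \beta = \tfrac12(\alpha \Gi x^T + \alpha_0).
\end{equation*}
Consequently $P$ admits a global section and I would fix the trivialization $P \cong H \times S^1$, with $Y = \partial_\tau$ and $\theta_P = d\tau + \pi^*\beta$, so that $d\theta_P = \pi^*F$ as required.

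The key computation is evaluating $\beta$ on $\hX$. Since $\hX$ is horizontal with respect to the canonical splitting $T(T^*C) = \ker\pi_*\oplus\ker\alpha$, we have $\alpha(\hX) = 0$; moreover $\alpha_0(\hX) = g(\hX,\hX) = 2\mu$. Thus $\beta(\hX) = \mu$. In the trivialization, the horizontal lift reads $\hat X = \hX - \beta(\hX)\, Y = \hX - \mu\,Y$, and with $k = 1$, $c = 0$ we have $a = \mu$, so
\begin{equation*}
  X' = \hat X + aY = \hX.
\end{equation*}
Hence $X'$ acts purely on the $H$ factor and trivially on the $S^1$ factor.

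Dividing by the flow of $X'$ therefore yields
\begin{equation*}
  W = P/\langle X'\rangle = (H \times S^1)/\langle \hX\rangle = (H/\langle \hX\rangle) \times S^1,
\end{equation*}
which is the asserted product decomposition. The main subtlety, and the reason for the diffeological formulation, is that $\hX$ may have fixed points on $H$ (where $X$ has fixed points on $C$, e.g.\ the apex of the cone on the zero section), so $H/\langle \hX\rangle$ need not be a smooth manifold; the identification above is valid as diffeological spaces without any regularity assumption on the $\hX$-action. Apart from tracking signs in the trivialization and confirming that the formulas from Lemma~\ref{lem:cone-twist} are correctly inserted, there is no real obstacle: the cancellation $a = \beta(\hX)$ is precisely the point, and it is this coincidence, specific to $c = 0$ and $k = 1$, that forces the twist to split as a product.
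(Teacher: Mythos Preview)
Your proof is correct and follows essentially the same route as the paper: trivialize $P$ via the explicit primitive $\beta$, check $\beta(\hX)=\mu=a$, and conclude $X'=\hX$ in the product $H\times S^1$. One small remark: your explanation of the diffeological phrasing is slightly off---by definition $g(X,X)$ is nowhere zero on a conic special K\"ahler manifold, so $\hX$ has no fixed points on $H$; the reason the quotient $H/\langle\hX\rangle$ may fail to be a manifold is rather that $\hX$ need not generate a free (or even periodic) circle action, as the paper notes just after the statement.
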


In particular, when \( H/\Span{\hX} \) is smooth, the diffeomorphism
is as manifolds.  For \( X \) regular, this is the case when \( C \)
has a global flat symplectic frame; such a frame always exists on some
discrete cover of~\( C \).

\begin{proof}
  From Lemma~\ref{lem:cone-twist}, \( F = d\beta \) with \( \beta =
  \tfrac12(\alpha\Gi x^T + \alpha_0) \).  This has the property that
  \( \hX \hook \beta = \tfrac12\alpha_0(\hX) = \tfrac12\norm{\hX}^2 =
  \mu \) which is the twisting function~\( a \).  As \( F \) is exact,
  the twist bundle~\( P \) is trivial, \( P = H \times S^1 \), with
  connection one-form \( \phi = \beta + d\tau \), where \( \tau \) is
  the parameter on the \( S^1 \)-factor.  The twist~\( W \) is the
  quotient \( P/\Span{X'} \), where \( X' = \hX^\phi +
  a\tfrac\partial{\partial\tau} \) with \( \hX^\phi \) the \( \phi
  \)-horizontal lift of \( \hX \) to~\( P \).  This means that in the
  product structure \( P = H \times S^1 \), we have \( \hX^\phi = \hX
  + \lambda \tfrac\partial{\partial\tau} \) and \( 0 = \phi(\hX^\phi)
  = \beta(\hX) + \lambda = a + \lambda \).  Thus we have \( X' = \hX
  \) and \( W = (H/\Span{\hX}) \times S^1 \).
\end{proof}

Note that for general~\( c \), we get \( X' = \hX -
c\tfrac\partial{\partial\tau} \) above and the twist is \( (H\times
S^1)/\Span{\hX - c\tfrac\partial{\partial\tau}} \).

Now in the general twist construction, if \( N \subset M \) is an \( X
\)-invariant submanifold, it inherits natural twist data from~\( M \):
writing \( \iota \colon N \to M \) for the inclusion, the circle
bundle is \( P_N = \iota^*P \) with curvature \( \iota^*F \) preserved
by~\( X \), and the twist function is simply \( \iota^*a \), as \(
d\iota^*a = \iota^*da= -\iota^*X\hook F = - X \hook \iota^*F \).  This
implies that the lift \( X' \) of \( X \) to~\( P \) its tangent to
the submanifold~\( P_N \).  In particular, if \( X' \) is
regular, then the twist \( P_N/\Span{X'} \) of~\( N \) is a
submanifold of the twist \( P/\Span{X'} \) of~\( M \).

We first consider the image of the cone \( C \) under the twist.

\begin{proposition}
  If \( X \) is regular, the image~\( C_Q \) in the twist \( Q \) of
  the conic manifold~\( C \) at \( c=0 \) is a Kähler product of the
  projective special Kähler manifold~\( S \) and the quotient \(
  \CH(1)/\bZ \) of a one-dimensional complex hyperbolic space.
\end{proposition}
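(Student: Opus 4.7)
The plan is to restrict all the relevant twist data from \( H = T^*C \) to the zero section \( C \) and identify the resulting Kähler submanifold of \( Q \) as the claimed product.

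First I would specialise the twist data to \( C \). On the zero section \( x = 0 \) and \( \alpha|_{TC} = 0 \), so Lemma~\ref{lem:cone-twist} restricts to \( F|_C = -\omega_C \) and \( a|_C = \mu \), with primitive \( \beta|_C = \tfrac12 X^\flat \). Taking \( c = 0 \) and \( k = 1 \), Proposition~\ref{prop:topology} trivialises the restricted twist bundle as \( P_C = C \times S^1 \) with connection one-form \( \phi = \tfrac12 X^\flat + d\tau \) and lifted vector field \( X' = X \) acting only on the first factor, so \( C_Q = (C/\Span{X}) \times S^1 \). Since \( X \) is regular and \( L_{IX}\mu = 2\mu \), the flow of \( IX \) provides a diffeomorphism \( \mu^{-1}(\mu_0) \times \bR \cong C \), while \( \mu^{-1}(\mu_0) \to S \) is the principal \( S^1 \)-bundle for the conic action, yielding \( C_Q \cong S \times \bR \times S^1 \) topologically.

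The heart of the argument is the metric computation. From the preceding lemma (with \( B = -1 \), \( c = 0 \)), \( g_N|_{TC} = \tfrac1\mu(g_{\bH X} - g_\bot) \), and at the zero section \( \bH X \cap TC = \Span{X, IX} \) because \( JX, KX \) are vertical. Horizontal lifts with respect to \( \phi \) are straightforward: \( e_i \in \{X, IX\}^\bot \) and \( IX \) lift to themselves, whereas \( \bar\partial_\tau \in TC_Q \) lifts to \( -X/\mu + \partial_\tau \) and projects to \( -X/\mu \) on \( C \). Using \( g(X, X) = g(IX, IX) = 2\mu \), a direct calculation gives
\begin{equation*}
  g_{C_Q} = -\tfrac1\mu\, g|_{\{X, IX\}^\bot} + 2\,dt^2 + \tfrac{2}{\mu^2}\,d\tau^2,
\end{equation*}
where \( t \) parametrises the \( IX \)-flow. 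The key observation is the cone identity \( L_{IX}g = 2g \): along \( \phi_t \) the restriction of \( g \) to \( \{X, IX\}^\bot \) scales as \( e^{2t} \) while \( \mu = \mu_0 e^{2t} \), so the factors cancel and the \( S \)-component becomes the \( t \)-independent expression \( -g_S/\mu_0 \), with \( g_S \) the Kähler-quotient metric on \( S \). The metric is therefore a genuine orthogonal direct sum.

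To finish, \( I \) descends to \( C_Q \) because \( X \) is holomorphic; it preserves the \( TS \)-factor since \( \{X, IX\}^\bot \) is \( I \)-invariant, and on \( \Span{\bar{IX}, \bar\partial_\tau} \) it is determined by \( I(IX) = -X \equiv \mu\bar\partial_\tau \pmod{\Span{X}} \). Setting \( u = \tfrac12 e^{2t} \) and \( x = \tau/\mu_0 \) transforms the \( (t, \tau) \)-piece of \( g_{C_Q} \) into \( \tfrac12(du^2 + dx^2)/u^2 \), the upper half-plane model of \( \CH(1) \) up to the overall factor \( \tfrac12 \), with standard complex structure \( I\partial_u = \partial_x \). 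Periodicity of \( \tau \) induces the \( \bZ \)-quotient, identifying this factor as \( \CH(1)/\bZ \) and \( C_Q \) as the desired (pseudo-)Kähler product. The principal obstacle is organising the horizontal lifts carefully---in particular the fact that \( \bar\partial_\tau \) acquires a \( -X/\mu \) contribution that produces the \( 2\,d\tau^2/\mu^2 \) term---and spotting the cone-scaling cancellation that turns a naively warped product into a true one; the change of variables is then routine.
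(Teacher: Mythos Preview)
Your argument is correct, but it proceeds quite differently from the paper's.  The paper works on the level of differential forms and the twisted differential~\( d_W \): it introduces the differential subalgebra \( \Omega^*_S \) of forms \Hrelated to pull-backs from~\( S \), computes \( d_W(\alpha_0/\mu) \) and \( d_W(\alpha_I/\mu) \) via the formula \( d_W\gamma = d\gamma - \tfrac1aF\wedge(X\hook\gamma) \), and reads off from these a pair of complementary integrable distributions \( \mathcal S_1 \), \( \mathcal S_2 \) on~\( C_Q \) whose leaf geometries are then identified.  You instead work dually with vector fields: you exploit the explicit product \( C_Q \cong (C/\Span{X})\times S^1 \) furnished by the trivialisation of~\( P_C \), compute the horizontal lifts of \( e_i \), \( IX \) and \( \bar\partial_\tau \) directly, and evaluate \( g_N \) on the resulting vectors, using the cone relation \( L_{IX}g = 2g \) to kill the \( t \)-dependence of the \( S \)-block.

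Both routes use the same cone-scaling observation at the decisive step; the difference is packaging.  Your approach is more concrete and makes the product metric visible by direct calculation, while the paper's is more intrinsic and, importantly, sets up the subalgebra~\( \Omega^*_S \) that is reused in the very next proposition to analyse the \( \CH(m+1) \) leaves of the full twist~\( Q \).  One small point: your assertion that ``\( I \) descends to \( C_Q \) because \( X \) is holomorphic'' is really the statement that \( I \) is \( X \)-invariant and hence \Hrelated to a tensor on~\( C_Q \); you might make that explicit, and note that integrability on the two-dimensional \( (t,\tau) \)-factor is automatic.
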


\begin{proof}
  Let \( \iota_C\colon C \to H \) be the inclusion, where \( C \) lies
  in \( H = T^*C \) as the zero-section.  The twist data is \(
  \iota^*F = -\iota^*\omega \), \( \iota^*a = \iota^*\mu = s \) and
  \begin{equation*}
    \iota^*g_N = -\tfrac1\mu g_{C,\bot} + \tfrac1{2\mu^2}(\alpha_0^2 +
    \alpha_I^2).
  \end{equation*}
  Let \( S \) be the Kähler quotient \( \mu^{-1}(s)/X \) of \( C \) at
  some regular value~\( s \).  As \( C \) is conic with \( X \)
  regular, we have projections \( \pi_S\colon C \to S \) and \(
  \overline \pi_S\colon H \to S \); furthermore, \( \pi_S^*g_S^{} =
  g_{C,\bot} \) on \( \mu^{-1}(s) \).  Since \( L_{IX}g = 2g \), we
  have \( L_{IX}(g/\mu) = 0 \) and so \( \pi_S^*g_S^{} = s
  g_{C,\bot}/\mu \) on all of~\( C \).

  Suppose \( \gamma \in \Omega^*(S) \).  Then \(
  \overline\pi_S^*\gamma \) is \( X \)-invariant, and there is an
  \Hrelated differential form \( \gamma_Q \) on~\( Q \).  We have \(
  d\gamma_Q \Hrel \overline\pi_S^*d\gamma - \tfrac1aF \wedge X\hook
  \pi_S^*\gamma = \overline\pi_S^*d\gamma \).  Thus the set
  \begin{equation}
    \label{eq:Om-S}
    \Omega^*_S = \Set{\gamma_Q \in \Omega^*(Q)}{\exists
    \gamma\in\Omega^*(S) \text{ such that } \gamma_Q \Hrel
    \overline\pi_S^*\gamma}
  \end{equation}
  is a differential subalgebra and pulls back to a subalgebra of \(
  \Omega^*(C_Q) \).

  Putting \( \tilde\alpha_0 \Hrel \alpha_0/\mu \) and \(
  \tilde\alpha_I \Hrel \alpha_I/\mu \), we use \eqref{eq:dX-flat} to
  compute
  \begin{equation}\label{eq:d-t-alpha}
    \begin{split}
      d\tilde\alpha_0 \Hrel {} &-\frac1{\mu^2} d\mu \wedge \alpha_0 +
      \frac1\mu d\alpha_0 - \frac1{\mu^2}F\wedge\alpha_0(X) \\
      &= -\frac1{\mu^2}\alpha_I\wedge\alpha_0 + \frac1\mu d\alpha_0 -
      \frac2\mu (d\alpha_0 + \omega_I) \\
      &= \frac1{\mu^2}\alpha_0\wedge\alpha_I -
      \frac1\mu(\alpha\wedge\Gi\alpha^T)
    \end{split}
  \end{equation}
  and
  \begin{equation*}
    d\tilde\alpha_I \Hrel d(1/\mu)\wedge\alpha_I = 0.
  \end{equation*}

  Pulling back to \( C_Q \), we have that \( d\iota^*\tilde\alpha_0 =
  \iota^*\tilde\alpha_0\wedge\iota^*\tilde\alpha_I \) and so \(
  \mathcal S_1 = \ker(\iota^*\alpha_0)\cap\ker(\iota^*\alpha_I) \) is
  an integrable foliation of \( C_Q \).  Similarly the kernel of \(
  \iota^*(\Omega^1_S) \) is a complementary two-dimensional integrable
  distribution~\( \mathcal S_2 \) of~\( C_Q \).  The leaves of \(
  \mathcal S_1 \) have the same ring of functions as~\( S \), so are
  diffeomorphic to~\( S \).  Moreover the two distributions \(
  \mathcal S_1 \) and \( \mathcal S_2 \) are orthogonal in the induced
  metric.  The metric on \( \mathcal S_1 \) is \Hrelated to \(
  -g_{C,\bot}/\mu = g_S/s \), that on \( \mathcal S_2 \) is \(
  \tfrac12\iota^*(\tilde\alpha_0^2+\tilde\alpha_I^2) \).  Each
  distribution inherits a complex structure from~\( I \) and we obtain
  a Kähler product.  The differential relations, together with the
  completeness of the metric on \( S_2 \), show that the universal
  cover of \( \mathcal S_2 \) is \( \CH(1) \) as a solvable group.  By
  Proposition~\ref{prop:topology}, \( \mathcal S_2 = \bR_{>0} \times S^1 =
  \CH(1)/\bZ \).
\end{proof}

\begin{proposition}
  Let \( X \)~be regular and \( c=0 \).  The differential algebra \(
  \Omega^*_S \) in~\eqref{eq:Om-S} gives a distribution
  \begin{equation*}
    \mathcal D = \ker\Omega^1_S
  \end{equation*}
  on \( Q \) that is integrable.  Its leaves are discrete quotients of
  complex Heisenberg groups~\( \CH(m+1) \), \( \dim_\bC C = m \), with
  left-invariant Riemannian structures homothetic to the standard
  Kähler metric.
\end{proposition}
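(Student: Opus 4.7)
The approach is to first establish integrability of \( \mathcal D \) by Frobenius, using that \( \Omega^*_S \) is a differential subalgebra of \( \Omega^*(Q) \) as noted in the proof of the previous proposition, and then to extract the left-invariant Lie-group structure on a leaf directly from the Maurer--Cartan equations recorded in~\eqref{eq:d-t-alpha}, together with \( d\alpha = -\alpha\wedge\oN \).

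For integrability, a tangent vector \( v\in T_qQ \) lies in \( \mathcal D \) if and only if its \( \hor \)-lift corresponds under \Hrelatedness to a vector in \( \ker\overline\pi_{S*}\subset TH \). Since \( \overline\pi_S \) is a submersion, each form \( \overline\pi_S^*\gamma \) with \( \gamma\in\Omega^k(S) \), \( k\geqslant 2 \), vanishes on tuples of such vectors, so every element of \( \Omega^k_S \) for \( k\geqslant 2 \) vanishes on \( \mathcal D \). Because \( \Omega^*_S \) is a differential subalgebra we have \( d(\Omega^1_S)\subset\Omega^2_S \), whence \( d\beta|_{\mathcal D}=0 \) for each \( \beta\in\Omega^1_S \), and \( \mathcal D \) is Frobenius-integrable, with leaves of real dimension \( 2m+2 \).

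To identify a leaf, I restrict to an open set where a flat symplectic frame for \( \NB \) exists, so that~\eqref{eq:local} gives \( \alpha = dx \) in fibre coordinates. Combined with
\begin{equation*}
  d\tilde\alpha_I \Hrel 0,\quad
  d\tilde\alpha_0 \Hrel \tilde\alpha_0\wedge\tilde\alpha_I
    - \tfrac{1}{\mu}\,dx\wedge\Gi\,dx^T,\quad
  d(dx) = 0,
\end{equation*}
these form a closed system of Maurer--Cartan equations in the coframe \( (\tilde\alpha_0,\tilde\alpha_I,dx) \) on the leaf. These are exactly the structure equations of the simply connected solvable Lie group realising \( \CH(m+1) \): the \( 2m \)~fibre coordinates generate an abelian normal subgroup, the term \( dx\wedge\Gi\,dx^T \) encodes the central Heisenberg commutator via the symplectic pairing on the fibres, and \( \tilde\alpha_I \) is dual to the distinguished dilation direction. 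Each leaf is therefore locally diffeomorphic to \( \CH(m+1) \); Proposition~\ref{prop:topology} together with the existence of a flat symplectic frame on a discrete cover of~\( C \) (guaranteed for regular~\( X \)) ensures that this diffeomorphism is global up to a discrete quotient.

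The remaining task is to compare the induced Riemannian structure with the standard Kähler metric on \( \CH(m+1) \). The expression of \( g_N \) in Lemma~\ref{lem:cone-twist} shows that its restriction to a leaf is built entirely from the above coframe and the fibre metric \( g_{C,\bot} \), each of which is left-invariant with respect to the group structure just identified. The main obstacle is to verify that the overall factor \( 1/\mu \) appearing in \( g_N \) combines with the coframe to yield precisely the standard Kähler metric up to a constant: \( \mu \) is constant along the Heisenberg subgroup but varies along \( \tilde\alpha_I \), and the correct identification of the \( \tilde\alpha_I \)-parameter with the Iwasawa dilation parameter should match this variation with the conformal scaling built into the standard left-invariant Kähler metric, yielding the claimed homothety.
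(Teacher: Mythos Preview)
Your integrability argument is fine and matches the paper. The gap is in the identification of the leaf with \( \CH(m+1) \): the coframe \( (\tilde\alpha_0,\tilde\alpha_I,dx) \) you propose does \emph{not} satisfy Maurer--Cartan equations with constant coefficients. In
\[
  d\tilde\alpha_0 \Hrel \tilde\alpha_0\wedge\tilde\alpha_I - \tfrac1\mu\,dx\wedge\Gi\,dx^T
\]
the factor \( 1/\mu \) is non-constant along the leaf, since \( \mu \) varies in the \( IX \)-direction (indeed \( d\mu=\alpha_I \)). So as written you do not have a left-invariant coframe, and you cannot conclude the leaf is a Lie group from these equations alone. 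The fix is to rescale the fibre forms by \( \nu^{-1} \) where \( \mu=\varepsilon\nu^2 \): setting \( \xi=\nu^{-1}dx \) one gets
\[
  d_W\xi = -\tfrac12\,\tilde\alpha_I\wedge\xi,\qquad
  \tfrac1\mu\,dx\wedge\Gi\,dx^T = \varepsilon\,\xi\wedge\Gi\,\xi^T,
\]
and now all coefficients are constant. This is exactly the content of the paper's choice \( V_v=\nu U_v \), carried out in vector-field language via the twist bracket formula \( [A_Q,B_Q]\Hrel[A,B]+\tfrac1aF(A,B)X \).

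Two related points. First, your description of the group is internally inconsistent: you say the fibre directions ``generate an abelian normal subgroup'' and then that \( dx\wedge\Gi\,dx^T \) ``encodes the central Heisenberg commutator''. On \( H \) the translations \( U_v \) commute; it is the twist that introduces the Heisenberg commutator \( [(V_v)_Q,(V_w)_Q]=\tfrac\varepsilon2(v\Gi w^T)X_Q \), and you need either the bracket formula or the rescaled Maurer--Cartan system to see this. Second, once you have the rescaled coframe, the metric computation becomes straightforward: \( g_N \) restricted to the leaf has constant coefficients in \( (\tilde\alpha_0,\tilde\alpha_I,\xi) \), so is automatically left-invariant, and the remaining check is a pointwise comparison with the standard \( \CH(m+1) \) metric. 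Your final paragraph correctly senses that the \( 1/\mu \) factor is the crux, but the resolution is the rescaling above, not a separate argument about the dilation parameter.
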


\begin{proof}
  Integrability of \( \mathcal D \) follows directly from that fact
  that \( \Omega^*_S \) is closed under the exterior differential.
  Each leaf~\( L_Q \) of~\( D \) is a twist of a leaf~\( L \) of \(
  \overline\pi_S^*\Omega^1(S) \) on \( H = T^*C \).  Now \( C_L
  \coloneqq L \cap C \) is an orbit of \( X \) and \( IX \), so by
  regularity is a copy of \( \bC^* \).

  Write \( \iota\colon C_L \to C \) for the inclusion of this orbit.
  The leaf \( L \) is the restriction \( \iota^*T^*C = T^*C|_{C_L} \).
  The relations in~\eqref{eq:X-eta}, show that \( \iota^*\NB \) and \(
  \iota^*\LC \) agree as connections on the bundle~\( \iota^*TC \).
  In particular, this bundle is flat, and there is a discrete cover \(
  C_L' \) over which it is trivial.  We thus have that the pull-back
  \( \iota^*\Sp(C) \) of the bundle of symplectic frames admits a flat
  symplectic section~\( s \) over~\( C_L' \).  In our case \( s \)~may
  be chosen to be unitary with span of the negative definite
  directions equal to the span of \( X \) and~\( IX \).  As in
  \eqref{eq:local} each \( v \in (\bR^{2m})^* \) gives rise to a
  tri-holomorphic isometry of \( H' \) around \( C_L' \).  Write \(
  U_v \) for the corresponding vector field.

  Write \( \mu = \varepsilon \nu^2 \), for some \( \varepsilon \in
  \{\pm1\} \) and a smooth function \( \nu\colon L'\to\bR \).  Put \(
  V_v = \nu U_v \). We claim that \( X \), \( IX \), \( V_v \), \(
  v\in (\bR^{2m})^* \), are \Hrelated to vector fields \( X_Q \), \(
  IX_Q \), \( (V_v)_Q \) on \( L_Q' \) generating a complex Heisenberg
  algebra.

  If \( A \) and \( B \) are vector fields on~\( L' \) that are
  \Hrelated to \( A_Q \) and \( B_Q \) on \( L_Q' \), then the Lie
  brackets are related by
  \begin{equation*}
    [A_Q,B_Q] \Hrel [A,B] +
    \frac 1{\iota^*a}(\iota^*F)(A,B)X,
  \end{equation*}
  see \cite[Lemma~3.7]{Swann:twist}.  In our case, from
  Lemma~\ref{lem:cone-twist} we have \( a = \mu = \varepsilon\nu^2 \) and
  \( F = \tfrac12(\alpha\wedge\Gi\alpha^T + \theta^T\wedge\Gi\theta) =
  \omega^* - \omega \).

  Firstly, \( \iota^*F(IX,X) = -i^*\omega(IX,X)= g(X,X) = 2\mu
  \). Hence,
  \begin{equation*}
    [(IX)_Q,X_Q] = \frac1\mu \iota^*F(IX,X)X_Q = 2X_Q.
  \end{equation*}

  Considering the Lie brackets of vertical vector fields, we find
  \begin{equation*}
    [(V_v)_Q,(V_w)_Q] = \frac
    1\mu\nu^2\omega^*(U_v,U_w)X_Q = \frac\varepsilon2(v\Gi w^T)X_Q.
  \end{equation*}
  For mixed brackets, note that \( X \) and \( IX \) commute with the
  fibre translations~\( U_v \).  Now \( d\nu \) is given by \(
  X\hook\omega_I = d\mu = 2\varepsilon\, \nu d\nu \), so \( X\nu = 0
  \) and \( IX\nu = \omega_I(X,IX)/2\varepsilon\nu = \nu \).  We thus
  have
  \begin{equation*}
    [X_Q,(V_v)_Q] = 0 
  \end{equation*}
  and
  \begin{equation*}
    [(IX)_Q,(V_v)_Q] = [IX,(V_v)]_Q = (IX\nu)\,(U_v)_Q
    = \nu (U_v)_Q = (V_v)_Q.
  \end{equation*}
  We thus see that \( (IX)_Q \), \( X_Q \), \( (V_v)_Q \), \(
  v\in(\bR^{2m})^* \), span a Lie algebra~\( \lie g \); \( (IX)_Q \)
  acts with weight \( 2 \) on~\( X_Q \) and weight \( 1 \) on~\(
  (V_v)_Q \); furthermore \( X_Q \), \( (V_v)_Q \) span a Heisenberg
  algebra with derived algebra generated by \( X_Q \).  Thus \( \lie g
  \) is the solvable algebra of \( \CH(m+1) \) as shown by Hitchin in
  \cite{Hitchin:quaternionic}.  This is a discrete cover of the
  leaf~\( L_Q \).

  For the metric \( g_N \) and our choice of section~\( s \), the
  above Lie algebra generators, where \( v \) runs over the standard
  unitary basis for \( \G \) on \( (\bR^{2m})^* \), has constant
  coefficients and so is left-invariant.  Indeed this homothetic to
  the standard orthonormal basis of \( \CH(m+1) \) when with a scaling
  factor of~\( \sqrt2 \).
\end{proof}

Note that the form \( \omega_I^N \) does not induce a Kähler form on
\( \CH(m+1) \).  Indeed direct computation gives \(
d_W\iota^*\omega_I^N = \alpha_{IJK}/\mu^2 \), which is non-zero.

\section{The hyperbolic plane}
\label{sec:hyperbolic-plane}

To provide more concrete examples of the c-map, we consider indefinite
projective special Kähler structures on open subsets~\( S \) of the
hyperbolic plane \( \RH(2) \) with constant curvature Kähler metric.
The space~\( \RH(2) \) is a solvable group of dimension~\( 2 \) with
one-dimensional derived algebra.  Choosing any unit vector~\( B \) in
the derived algebra and putting \( A = -IB \), we obtain an
orthonormal basis for the Lie algebra satisfying
\begin{equation*}
  [A,B] = \lambda B
\end{equation*}
for some non-zero constant \( \lambda \in \bR \).

Write \( a \), \( b \) for the dual basis to \( A,B \).  Then these
one-forms satisfy
\begin{equation*}
  da = 0\quad\text{and}\quad db = - \lambda a\wedge b.
\end{equation*}
The metric is \( g_S = a^2 + b^2 \) and the corresponding Kähler form
is \( \omega_S = a \wedge b \).  We note that \( Ia = b \).

Locally any special Kähler cone \( C \) over \( S \subset \RH(2) \) is
of the form \( C = {\bR_{>0} \times C_0} \).  Here \( C_0 \) is a
bundle over~\( S \), with connection one-form~\( \varphi \) satisfying
\( d\varphi = 2\pi^*\omega_S = 2\tilde a \wedge \tilde b \), where \(
\tilde a = \pi^* a \), etc., corresponding to reduction at level \(
g(X,X) = -1 \) in~\eqref{eq:curv-C}.

\begin{lemma}
  The metric and Kähler form of \( C = \bR_{>0} \times C_0 \) are
  \begin{equation*}
    g_C = \hat a^2 + \hat b^2 - \hat\psi^2 - \hat\varphi^2 ,\qquad
    \omega_C =  \hat a\wedge \hat b - \hat\varphi \wedge \hat\psi,
  \end{equation*}
  where \( \hat a = t\tilde a \), \( \hat b = t\tilde b \), \( \hat
  \varphi = t \varphi \) and \( \hat \psi = dt \), with \( t \)
  denoting the standard coordinate on \( \bR_{>0} \).  The conic
  symmetry \( X \) satisfies
  \begin{equation*}
    IX = t\frac\partial{\partial t}.
  \end{equation*}
\end{lemma}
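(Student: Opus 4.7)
The plan is to build $g_C$ and $\omega_C$ on the product $\bR_{>0}\times C_0$ from the Riemannian submersion $C_0 \to S$ together with the homothety generated by $IX$, and then to read off $IX = t\,\partial/\partial t$ as a consequence.

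First, on $C_0$ the Kähler reduction at level $g(X,X) = -1$ presents $\pi\colon C_0 \to S$ as a principal $U(1)$-bundle with fibre-generator $X$ and connection $\varphi = X^\flat/g(X,X)$. The Riemannian-submersion splitting $TC_0 = \bR X \oplus \ker\varphi$, together with $g_C(X,X) = -1$, identifies $g_C|_{C_0} = \tilde a^2 + \tilde b^2 - \varphi^2$.

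Second, by Lemma~\ref{lem:conic-connection} the vector field $IX$ generates a homothety $L_{IX}g_C = 2g_C$ transverse to $C_0$. I would use this flow to label points of $C$ by the affine parameter~$t \in \bR_{>0}$ with $t = 1$ on $C_0$, giving $IX = t\,\partial/\partial t$ at once. Since $[X, IX] = 0$ and $IX$ preserves the $\varphi$-horizontal distribution, the $C_0$-part of $g_C$ rescales under the flow by~$t^2$. Moreover $g_C(IX, IX) = g_C(X, X) = -t^2$ forces $g_C(\partial/\partial t, \partial/\partial t) = -1$. Provided $\partial/\partial t$ is $g_C$-orthogonal to the full $C_0$-slice, this yields $g_C = t^2 g_C|_{C_0} - dt^2$, which matches the stated formula after substituting $\hat a = t\tilde a$, $\hat b = t\tilde b$, $\hat\varphi = t\varphi$, $\hat\psi = dt$.

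Third, $\omega_C$ is obtained from $\omega_C(U, V) = g_C(IU, V)$ applied to the orthogonal frame $\{\tilde A, \tilde B, X, \partial/\partial t\}$. With $I\tilde A = \tilde B$ lifting $Ia = b$ from $S$ and $IX = t\,\partial/\partial t$ from Step~2, only the pairings $\omega_C(\tilde A, \tilde B) = t^2$ and $\omega_C(X, \partial/\partial t) = -t$ are non-zero, and they reassemble into $\hat a\wedge\hat b - \hat\varphi\wedge\hat\psi$.

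The one point requiring care is the orthogonality of $\partial/\partial t$ with the full $TC_0$-slice, used in Steps~2 and~3: for $Y \in TC_0$, one has $g_C(\partial/\partial t, Y) = t^{-1}g_C(IX, Y) = -t^{-1}g_C(X, IY)$, which vanishes on $Y = X$ by antisymmetry of $\omega_C$ and on horizontal $Y$ because the $\varphi$-horizontal distribution is $I$-invariant, being the orthogonal complement of $\bH X \cap TC_0 = \bR X$. With this orthogonality in hand everything else is bookkeeping.
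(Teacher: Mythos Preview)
Your proof is correct and takes a genuinely different route from the paper. The paper argues by ansatz: it asserts the metric has the stated shape but with $\hat\psi = k\,dt$ and $\omega_C = \hat a\wedge\hat b - \varepsilon\,\hat\varphi\wedge\hat\psi$ for unknown $k>0$ and $\varepsilon=\pm1$, then imposes $d\omega_C=0$ to pin down $k=1$, $\varepsilon=+1$, checks integrability of $I$ via $d\Lambda^{1,0}\subset\Lambda^{2,0}+\Lambda^{1,1}$, and finally writes $IX=f\,\partial/\partial t$ and solves $L_{IX}g=2g$ for $f=t$. You instead build the metric from the reduction data on $C_0$ together with the flow of the homothety $IX$ (Lemma~\ref{lem:conic-connection}), so that $IX=t\,\partial/\partial t$ is the \emph{definition} of the $\bR_{>0}$-coordinate rather than something to be solved for; since $C$ is already given as K\"ahler you never need to re-verify closedness or integrability. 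Your approach is more geometric and more economical; the paper's explicit computation has the side benefit of recording the coframe differentials $d\hat a,\dots,d\hat\psi$ in~\eqref{eq:d-C}, which are used immediately in the subsequent curvature and flatness lemmas.

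One small correction: in your justification that the $\varphi$-horizontal distribution is $I$-invariant, the phrase ``being the orthogonal complement of $\bH X\cap TC_0=\bR X$'' is not quite right (and $\bH X$ should be $\bC X$ here, since $C$ carries only the single complex structure~$I$). The orthogonal complement of $\bR X$ \emph{inside} $TC_0$ is not a priori $I$-stable. The clean statement is that $TC_0=(IX)^\perp$, so $\ker\varphi = X^\perp\cap TC_0 = X^\perp\cap(IX)^\perp = (\bC X)^\perp$ in $TC$, and this last space is manifestly $I$-invariant. With that fix your orthogonality argument goes through.
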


\begin{proof}
  In general the metric is as claimed with \( \hat\psi = k\,dt \) for
  some \( k \in \bR_{>0} \) and \( \omega_C = \hat a\wedge \hat b -
  \varepsilon \hat\varphi \wedge \hat\psi\), for some \( \varepsilon =
  \pm1 \). For this structure to be Kähler we need \( d\omega_C = 0
  \).  However,
  \begin{equation}
    \label{eq:d-C}
    \begin{gathered}
      d\hat\psi = 0,\quad
      d\hat\varphi = \frac 1t(dt\wedge\hat\varphi
        + 2\hat a\wedge\hat b),\\  
      d\hat a = \frac 1t dt\wedge\hat a,\quad
      d\hat b = \frac1t(dt\wedge\hat b- \lambda\hat a\wedge\hat b),
    \end{gathered}
  \end{equation}
  so \( d\omega_C = 2\,\hat a\wedge \hat b \wedge (dt - \varepsilon
  \hat \psi ) / t \), implying that \( \varepsilon = +1 \) and \( k=1
  \).  Noting that \( I\hat a = \hat b \) and \( I\hat\varphi =
  \hat\psi \), we have from~\eqref{eq:d-C} that \( \Lambda^{1,0} =
  \operatorname{Span}\{\hat a - i\hat b,\hat \varphi - i \hat\psi\} \)
  satisfies \( d\Lambda^{1,0} \subset \Lambda^{2,0} + \Lambda^{1,1}
  \), and so \( I \) is integrable.

  To obtain the conic symmetry, write \( IX = f \partial/\partial t
  \).  Then \( L_{IX}\hat a = IX \hook d\hat a + d(IX\hook \hat a) =
  f\hat a/t \), and similarly for \( \hat b \) and \( \hat\varphi \).
  On the other hand \( L_{IX}I=0 \) and \( \hat\psi = I\hat\varphi \)
  give \( L_{IX}\hat\psi = IL_{IX}\hat\varphi = f\hat\psi/t \).  Thus
  the condition \( L_{IX}g = 2g \) implies that \( f = t \).
\end{proof}

\begin{lemma}
  \label{lem:Riem-flat}
  The pseudo-Riemannian metric \( g_C \) is flat if and only if \(
  \lambda^2=4 \).
\end{lemma}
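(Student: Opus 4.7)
My plan is to recognise \( g_C \) as a pseudo\hyphen Riemannian cone and reduce the flatness question to a constant\hyphen curvature statement on a three\hyphen dimensional base.  Since \( \hat\psi = dt \) and \( \hat a,\hat b,\hat\varphi \) are the corresponding untilded forms on \( C_0 \) multiplied by~\( t \), one may write
\begin{equation*}
  g_C = -dt^2 + t^2 \tilde g,\qquad \tilde g = \tilde a^2 + \tilde b^2 - \varphi^2,
\end{equation*}
on \( \bR_{>0} \times C_0 \), with \( \tilde g \) of signature~\( (2,1) \).  In complete parallel with the Riemannian statement that \( dr^2 + r^2 \tilde g \) is flat if and only if \( \tilde g \) has constant sectional curvature~\( +1 \), the warped\hyphen product formulas show that this signature\hyphen \( (2,2) \) cone is flat if and only if \( \tilde g \) has constant sectional curvature~\( -1 \).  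The model realisation is the hyperboloid \( \langle u,u\rangle = -1 \) inside flat \( \bR^{2,2} \), whose induced metric furnishes such a \( \tilde g \) and whose cone is precisely the flat ambient Minkowski metric.

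It remains to determine when the three\hyphen dimensional pseudo\hyphen Riemannian metric \( \tilde g \) has constant sectional curvature~\( -1 \).  Pulling back \( da=0 \), \( db = -\lambda a \wedge b \) and using the connection relation \( d\varphi = 2\tilde a \wedge \tilde b \) that defines~\( C_0 \), the coframe \( (e^1,e^2,e^3) = (\tilde a,\tilde b,\varphi) \) of signature \( (+,+,-) \) satisfies
\begin{equation*}
  de^1 = 0,\qquad de^2 = -\lambda e^1 \wedge e^2,\qquad de^3 = 2\,e^1 \wedge e^2.
\end{equation*}
Solving Cartan's first structure equation together with metric compatibility is a short linear algebra exercise, yielding Levi\hyphen Civita connection one\hyphen forms of the schematic shape \( \omega^1_2 = \lambda e^2 + e^3 \), \( \omega^1_3 = e^2 \), \( \omega^2_3 = -e^1 \).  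The corresponding curvature two\hyphen forms \( \Omega^i_j = d\omega^i_j + \omega^i_k \wedge \omega^k_j \) emerge as scalar multiples of \( e^i \wedge e^j \).  Two of the three independent multiples are independent of~\( \lambda \) and match the constant\hyphen curvature\hyphen \( (-1) \) model automatically; the remaining one is \( 3-\lambda^2 \) times the corresponding reference value, so consistency forces the single equation \( \lambda^2 = 4 \).

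The computation is wholly routine and no conceptual obstacle is anticipated.  The only point requiring care is the sign convention for sectional curvature in indefinite signature: I would pin this down by writing the curvature two\hyphen forms of an explicit constant\hyphen curvature\hyphen \( (-1) \) model (for example the hyperboloid above) in the same signature \( (+,+,-) \), and comparing termwise with the three \( \Omega^i_j \) computed from~\( \tilde g \).
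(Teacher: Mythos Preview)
Your approach is correct and genuinely different from the paper's.  The paper works directly in dimension four: in the unitary coframe \( (\hat a,\hat b,\hat\varphi,\hat\psi) \) it writes out the Levi--Civita connection matrix \( s^*\oLC \) explicitly from~\eqref{eq:d-C} and then computes the curvature \( s^*\OLC \), which turns out to be \( (4-\lambda^2)/t^2 \) times a matrix supported in a single \( 2\times2 \) block, whence the result.  You instead exploit the cone structure \( g_C=-dt^2+t^2\tilde g \) to reduce to a three-dimensional constant-curvature problem, which is conceptually cleaner and halves the size of the linear algebra.  One practical advantage of the paper's route is that the explicit four-dimensional \( s^*\oLC \) is reused immediately afterwards (in determining the special connection~\( \NB \) in Proposition~\ref{prop:H2-lambda}, and again in \S\ref{sec:underst-twist-flat}), so the four-dimensional computation is not wasted.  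On your side, the comparison step should be made explicit rather than deferred: with your connection forms one gets \( \Omega^1{}_2=(3-\lambda^2)\,e^1\wedge e^2 \), \( \Omega^1{}_3=e^1\wedge e^3 \), \( \Omega^2{}_3=e^2\wedge e^3 \), while constant curvature~\( -1 \) in signature \( (+,+,-) \) requires \( \Omega^i{}_j=-\eta_{jj}\,e^i\wedge e^j \), so the target for \( \Omega^1{}_2 \) is \( -e^1\wedge e^2 \) and one obtains \( 3-\lambda^2=-1 \), i.e.\ \( \lambda^2=4 \).
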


\begin{proof}
  With respect to the unitary coframe \( s^*\theta = (\hat a,\hat
  b,\hat\varphi,\hat\psi) \) the connection one-form \( \oLC \) is
  uniquely determined by \( ds^*\theta = - s^*\oLC \wedge s^*\theta
  \), with \( \oLC^T\G + \G \oLC = 0 \) and \( \ii\oLC = \oLC\ii \).
  It follows from~\eqref{eq:d-C} that
  \begin{equation*}
    s^*\oLC =
    \frac1t
    \begin{pmatrix}
      0 & \hat\varphi + \lambda\hat b   & \hat b & \hat a \\
      -\hat\varphi - \lambda \hat b & 0 & -\hat a & \hat b \\
      \hat b  & -\hat a & 0 & \hat\varphi \\
      \hat a & \hat b & -\hat\varphi & 0
    \end{pmatrix}
    .
  \end{equation*}
  The curvature is then given by
  \begin{equation*}
    s^*\OLC = s^*(d\oLC + \oLC\wedge\oLC) = \frac{4-\lambda^2}{t^2}
    \begin{pmatrix}
      0 & \hat a\wedge\hat b&0 & 0 \\
      -\hat a\wedge\hat b &0&0 & 0 \\
      0 & 0 & 0 & 0\\
      0 & 0 & 0 & 0
    \end{pmatrix}
  \end{equation*}
  and the flatness result follows.
\end{proof}

\begin{proposition}
  \label{prop:H2-lambda}
  The cone \( (C,g_C,\omega_C) \) over \( S \subset \RH(2) \) is conic
  special Kähler if and only if \( \lambda^2 = 4/3 \) or~\( 4 \).
\end{proposition}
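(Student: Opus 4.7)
The plan is to write $\NB = \LC + \eta$ and exploit the special Kähler axioms to constrain $\eta$ to finitely many parameters. Torsion-freeness, the symplectic condition $\NB\omega = 0$, and $d^\NB I = 0$ together force $\eta$, after lowering its top index with~$g$, to be a totally symmetric real $3$-tensor $\tilde\eta$ of type $\{3,0\} + \{0,3\}$, as recalled in the proof of Lemma~\ref{lem:conic-connection}. The conic axiom $\NB X = \LC X = -I$ becomes $\eta_{\cdot} X = 0$, which by symmetry is $X \hook \tilde\eta = 0$, and the type condition gives also $IX \hook \tilde\eta = 0$. Since $X$ and $IX$ span the kernel of $\pi_{S*}\colon TC \to TS$---they are respectively proportional to the fibre direction of $C_0\to S$ and to $\partial/\partial t$---the tensor $\tilde\eta$ is supported entirely on the coframe elements $\hat a$ and $\hat b$.

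Setting $z = \hat a + i \hat b$, I would parameterise
\begin{equation*}
  \tilde\eta = \re\bigl((p + iq)\, z^3\bigr) = p\,(\hat a^3 - 3\hat a\hat b^2) + q\,(\hat b^3 - 3\hat a^2\hat b)
\end{equation*}
for real functions $p, q$ on~$C$. The invariance $L_{IX}\eta = 0$, which follows from $L_{IX}\NB = 0$ in Lemma~\ref{lem:conic-connection} combined with $L_{IX}\LC = 0$, together with $L_{IX}\hat a = \hat a = L_{IX}\hat b$, forces $L_{IX}p = -3p$ and likewise for~$q$. Hence $p = p_0/t^3$ and $q = q_0/t^3$ for $t$-independent functions $p_0, q_0$ on~$C_0$ (not required to be $X$-invariant, since $X$ does not preserve~$\NB$ in general).

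The central step is to impose flatness $R^\NB = R^\LC + d^\LC \eta + \eta \wedge \eta = 0$. By Lemma~\ref{lem:Riem-flat}, $R^\LC$ has a single non-vanishing component, proportional to $(4 - \lambda^2)\,\hat a\wedge\hat b / t^2$, in the top-left $2\times2$ block of the frame. The wedge square $\eta\wedge\eta$, whose value on $(U,V)$ is $[\eta_U, \eta_V]$, also lives in that block and is purely quadratic in $(p_0, q_0)$. The covariant derivative $d^\LC\eta$, computed directly from the explicit matrix $s^*\oLC$ of Lemma~\ref{lem:Riem-flat}, extends beyond the diagonal block: the entries connecting the $\hat a, \hat b$ directions to the $\hat\varphi, \hat\psi$ directions give rise to $\hat\varphi$-valued and $\lambda\hat b$-valued contributions.

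The hard part will be the block-by-block matching of the resulting equation. Components of $d^\LC\eta$ involving $\hat\varphi$ or lying in off-diagonal blocks have no counterpart in $R^\LC$ or $\eta\wedge\eta$ and must cancel on their own, imposing restrictive algebraic relations among $p_0, q_0$ and~$\lambda$. The surviving $\hat a\wedge\hat b$ coefficient in the top-left block must then equal $-(4-\lambda^2)/t^2$. After exploiting the residual $\Un(1)$-action rotating $(p_0, q_0)$, I expect the system to admit exactly two solution branches: either $p_0 = q_0 = 0$, in which case $\eta = 0$ and Lemma~\ref{lem:Riem-flat} forces $\lambda^2 = 4$; or a non-trivial branch in which the $\lambda\hat b$ contribution in $d^\LC\eta$ is balanced by a quadratic term from $\eta\wedge\eta$, which by direct calculation turns out to be consistent exactly when $\lambda^2 = 4/3$, with $p_0^2 + q_0^2$ determined by the remaining equations. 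Both branches produce genuine conic special Kähler structures, yielding both implications of the proposition.
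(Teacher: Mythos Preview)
Your setup is the same as the paper's: write \( \NB = \LC + \eta \), use the symmetric type \( \{3,0\} \) condition and \( X \hook\eta = IX\hook\eta = 0 \) to reduce \( \eta \) to the two-dimensional span of \( \hat a,\hat b \), then impose flatness. Your parameterisation by \( \re((p+iq)z^3) \) is equivalent to the paper's \( (u,v) \) with \( u = s_+\tilde a + s_-\tilde b \), \( v = s_-\tilde a - s_+\tilde b \), and your \( t^{-3} \) scaling matches.

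Where your sketch goes wrong is in the mechanism producing \( \lambda^2 = 4/3 \). First, the off-diagonal blocks of \( d^\LC\eta \) do \emph{not} impose constraints: with \( \eta \) already of the above form, the torsion-free relations \( u\wedge\hat a + v\wedge\hat b = 0 = v\wedge\hat a - u\wedge\hat b \) force those blocks to vanish identically. Second, the constraint on \( \lambda \) is not algebraic. In the top-left block the flatness equation splits as \( W = 0 \) (the \( \eta\wedge\eta \) part balancing \( R^\LC \)), giving \( p_0^2+q_0^2 \propto 4-\lambda^2 \), and two further equations \( U = V = 0 \) that are genuinely \emph{differential}: the \( \hat\varphi \)-terms from \( [\oLC,\eta] \) combine with \( d\eta \) to give a first-order equation for the phase \( z \) of \( p_0 + iq_0 \), namely \( dz = 2\varphi + 3\lambda\tilde b \). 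The condition \( \lambda^2 = 4/3 \) then arises as the \emph{integrability} of this equation, i.e.\ closedness of the right-hand side, since \( d(2\varphi + 3\lambda\tilde b) = (4 - 3\lambda^2)\tilde a\wedge\tilde b \). Your proposal treats \( p_0,q_0 \) as if they were constants to be balanced algebraically, but the phase is a non-constant function on \( C_0 \) and the constraint on \( \lambda \) comes from \( d^2z = 0 \).
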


\begin{proof}
  We need to determine when the geometry admits a flat symplectic
  connection.  In the notation of the proof of Lemma~\ref{lem:Riem-flat},
  we need to determine a matrix-valued one-form \( \eta \) such that \(
  s^*\oN = s^*\oLC + \eta \) is flat, with
  \begin{enumerate*}
  \item\label{it:st} \( \eta \wedge s^*\theta = 0 \),
  \item\label{it:I-eta} \( \ii\eta = - \eta\ii \) and
  \item\label{it:eta-s} \( \eta^T\Gi = - \Gi\eta \).
  \end{enumerate*}
  Furthermore the analysis of Lemma~\ref{lem:conic-connection}, shows that
  we must have \( X \hook \eta = 0 \) and \( IX \hook \eta = 0 \).
  The latter implies that the entries of \( \eta \) lie in the span of
  \( \hat a \) and \( \hat b \).  Now \ref{it:I-eta} and
  \ref{it:eta-s} imply
  \begin{equation*}
    \eta =
    \begin{pmatrix}
      u & v & p & q \\
      v & -u & q & -p \\
      -p & -q & x & y \\
      -q & p & y & -x
    \end{pmatrix}
  \end{equation*}
  with all entries in the span of \( \hat a \) and \( \hat b \).
  Using \ref{it:st}, and considering the coefficients of \( \hat\varphi
  \) and \( \hat\psi \) gives first \( p = 0 = q \) and then \( x = 0
  = y \).  We then have
  \begin{equation}
    \label{eq:ab-uv}
    u \wedge \hat a + v \wedge \hat b = 0 \quad\text{and}\quad
    v \wedge \hat a - u \wedge \hat b = 0.
  \end{equation}
  The curvature of \( \nabla \) is \( \ON = d\oN + \oN \wedge \oN \)
  which pulls back to
  \begin{equation*}
    \begin{pmatrix}
      U & V + W   & 0 & 0\\
      V - W & - U & 0 & 0\\
      0 & 0 & 0 & 0 \\
      0 & 0 & 0 & 0 
    \end{pmatrix}
  \end{equation*}
  where
  \begin{gather*}
    U = du + \frac2t(\hat\varphi + \lambda\hat b)\wedge v,\quad
    V = dv - \frac2t(\hat\varphi + \lambda\hat b)\wedge u,\\
    W = \frac{4-\lambda^2}{t^2}\hat a\wedge\hat b + 2u\wedge v.
  \end{gather*}
  Equations~\eqref{eq:ab-uv} are solved in general by writing \( u =
  s_+\tilde a + s_-\tilde b \), \( v = s_-\tilde a - s_+\tilde b \),
  since \( \hat a=t\tilde a \) etc.  Then \( W = 0 \) is equivalent to
  \begin{equation}
    \label{eq:rs-lambda}
    s_+^2 + s_-^2 = \tfrac12 (4-\lambda^2)
  \end{equation}
  which is constant.  We may thus write \( s_+ = r\cos z \), \( s_-
  =r\sin z \) for some local function \( z \) and constant \( r>0 \)
  satisfying \( 2r^2 = (4-\lambda^2) \).

  Computing \( du \) and \( dv \), the equations \( U = 0 = V \) become
  \begin{equation*}
    (dz - 2\varphi - 3\lambda\tilde b) \wedge v = 0 = (dz - 2\varphi - 
    3\lambda\tilde b) \wedge u
  \end{equation*}
  If \( r=0 \), we have \( u=v=0 \), \( \NB = \NC \) and \(
  \lambda^2=4 \).  For \( r\ne 0\), we see that \( dz = 2\varphi +
  3\lambda\tilde b \).  In particular the right-hand side must be
  closed.  But \( d(2\varphi + 3\lambda\tilde b) = (4-3\lambda^2)\tilde
  a\wedge\tilde b \), so \( \lambda^2 = 4/3 \), as claimed.
\end{proof}

Note that this result does not require the conic symmetry to be
periodic or even quasi-regular.

\subsection{The flat case}
\label{sec:underst-twist-flat}

In the case \( \lambda = 2 \), we have that the Levi-Civita and
special Kähler connections coincide.  Let \( C_0 \) be the principal
\( S^1 \)-bundle over all of~\( \RH(2) \), with connection one form \(
\varphi \) satisfying \( d\varphi = 2\pi^*(a\wedge b) = 2\tilde
a\wedge\tilde b \).  Write the principal action as \( p \mapsto
e^{i\tau}\cdot p \).  The proof of Lemma~\ref{lem:Riem-flat} provides us
with the derivative of the unitary coframe~\( s^*\theta = (\hat a,\hat
b,\hat \varphi, \hat \psi) \).  The hyperKähler structure on \( H =
T^*C \) of the rigid c-map is
\begin{gather*}
  g_H = \hat a^2 + \hat b^2 -  \hat \varphi^2 - \hat \psi^2 + \hat A^2
  + \hat B^2 - \hat \Phi^2 - \hat \Psi^2,\\
  \omega_I = \hat a \wedge \hat b - \hat \varphi \wedge \hat \psi
  - \hat A \wedge \hat B + \hat \Phi \wedge \hat \Psi,\\
  \omega_J = \hat A\wedge\hat a + \hat B\wedge\hat b + \hat\Phi \wedge
  \hat\varphi + \hat\Psi\wedge\hat\psi,\\
  \omega_K = \hat A\wedge\hat b - \hat B\wedge\hat a + \hat\Phi \wedge
  \hat\psi - \hat\Psi \wedge \hat\varphi,
\end{gather*}
where \( (\hat A,\hat B,\hat \Phi,\hat \Psi) \coloneqq s^*\alpha = dx -
xs^*\oLC \).  We have \( ds^*\alpha = -s^*\alpha\wedge s^*\oLC \) and
\( \hX \hook s^*\alpha = 0 \) by definition.  Now \( \alpha_I = I\hX
\hook g_H = IX \hook (-\hat\psi^2) = -t\hat\psi \), so \( \alpha_0 =
-I\alpha_I = -t\hat\varphi \) and \( \mu = \tfrac12\norm{\hX}^2 = -t^2/2
\).  Similarly \( \alpha_J = I\hX \hook \omega_K = -t\hat\Phi \) and \(
\alpha_K = -I\hX \hook\omega_J = -t\hat\Psi = I\alpha_J \).

Now the metric we twist is 
\begin{equation*}
  \begin{split}
    g_N &= -\frac1\mu g_H +
    \frac1{\mu^2}g(\alpha_0^2+\alpha_I^2+\alpha_J^2+\alpha_K^2)\\
    &=\frac2{t^2}(\hat a^2 + \hat b^2 +  \hat \varphi^2 + \hat \psi^2 +
    \hat A^2 + \hat B^2 + \hat \Phi^2 + \hat \Psi^2),
  \end{split}
\end{equation*}
which is a complete metric on \( (t>0) \).  Thus \( g_N \) has
constant coefficients with respect to the coframe \( (\tilde a, \tilde
b, \varphi, \tilde\psi, \tilde A,\dots, \tilde \Psi) \), where the
last five terms are given by \( \tilde\psi = \hat\psi /t \), etc.

The twist data is
\begin{equation*}
  F = - \hat a \wedge \hat b + \hat \varphi \wedge \hat \psi
  - \hat A \wedge \hat B + \hat \Phi \wedge \hat \Psi
\end{equation*}
with twist function \( a \) equal to \( \mu = -t^2/2 \).

The coframe~\( \gamma \coloneqq s^*\theta/t = (\tilde a,\tilde
b,\varphi,\tilde\psi) \) on~\( C \) is invariant under~\( X \), so we
can compute these twisted differentials immediately:
\begin{equation}
  \label{eq:dWa-psi}
  \begin{gathered}
    d_W\tilde a = d\tilde a = 0,\quad
    d_W\tilde b = d\tilde b = 2\tilde b\wedge\tilde a,\\
    d_W\varphi = d\varphi +\frac2{t^2}F\wedge X\hook\varphi = 2\tilde
    a\wedge\tilde b + \frac2{t^2}F = 2(\varphi \wedge\tilde\psi - \tilde
    A\wedge\tilde B + \tilde\Phi\wedge\tilde\Psi),\\
    d_W\tilde\psi = d(dt/t) = 0.
  \end{gathered}
\end{equation}
On the other hand \( \tilde\delta \coloneqq (s^*\alpha)/t = (\tilde
A,\tilde B,\tilde\Phi,\tilde\Psi) \), has \( L_{\hX} \tilde\delta =
(L_{\hX}s^*\alpha) /t = \hX \hook ds^*\alpha / t =
\tilde\delta(\hX\hook s^*\oLC) = -\tilde\delta\ii \), so these forms
are not invariant.  However the bundle \( C_0 \to \RH(2) \) is
trivial; indeed \( d\varphi = 2\tilde a \wedge\tilde b = -d\tilde b
\), and we may write \( \varphi = d\tau - \tilde b \) with \( \exp(\ii
\tau) \) globally defined.  We have \( X\tau = 1 \), and \(
L_{\hX}(\tilde\delta e^{\ii \tau}) = 0 \).  Putting \( \delta =
\tilde\delta e^{\ii \tau} \), we find that
\begin{equation}
  \label{eq:dWdelta}
  \begin{split}
    d_W\delta&= d(\frac1ts^*\alpha\, e^{\ii \tau}) \\
    &= -\frac 1{t^2}\hat\psi \wedge s^*\alpha\, e^{\ii \tau} -
    \frac1ts^*\alpha \wedge s^*\oLC\, e^{\ii \tau} - \frac1ts^*\alpha
    \wedge \ii e^{\ii \tau} d\tau \\
    &= -\tilde\psi\wedge\delta - \delta\wedge s^*\oLC - \delta
    \wedge (\varphi + \tilde b)\ii\\ 
    &= \delta \wedge
    \begin{pmatrix}
      \tilde\psi&-\tilde b&-\tilde b&-\tilde a\\
      \tilde b&\tilde\psi&\tilde a&-\tilde b\\
      -\tilde b&\tilde a&\tilde\psi&\tilde b\\
      -\tilde a&-\tilde b&-\tilde b&\tilde\psi
    \end{pmatrix}
    ,
  \end{split}
\end{equation}
which has constant coefficients with respect to the coframe \(
(\gamma,\delta) \), as do the relations~\eqref{eq:dWa-psi}.

As \( g_N \) is complete, it follows that the universal cover of the
twist gives a left-invariant metric on a Lie group~\( G \).  Writing
\( \epsilon = \frac1{\sqrt2} ({\delta_1+\delta_4},\allowbreak
{\delta_2-\delta_3},\allowbreak {-\delta_2-\delta_3},\allowbreak
{-\delta_1+\delta_4}) \), equation~\eqref{eq:dWdelta} gives
\begin{equation*}
  d_W\epsilon = \epsilon\wedge
  \begin{pmatrix}
    \tilde\psi - \tilde a& 0 & 2\tilde b & 0 \\
    0 & \tilde\psi - \tilde a& 0 & -2\tilde b \\
    0 & 0 & \tilde\psi + \tilde a& 0 \\
    0 & 0 & 0 & \tilde\psi + \tilde a
  \end{pmatrix}
\end{equation*}
and we have \( d_W\varphi = 2(\varphi \wedge \tilde\psi +
\epsilon_1\wedge\epsilon_3 + \epsilon_4\wedge\epsilon_2) \).

The Lie algebra \( \lie g \) is completely solvable, with derived
algebra~\( \lie n \) of codimension~\( 2 \), while \( \lie n \) is \(
3 \)-step nilpotent, with \( \lie n' = [\lie n,\lie n] \) of dimension
\( 3 \) and \( \lie n^{(2)} = [\lie n,\lie n'] \) of dimension~\( 1
\).  We see that the Lie algebra is isomorphic to the solvable algebra
corresponding to the non-compact symmetric space \(
\operatorname{Gr}_2^+(\bC^{2,2}) = \Un(2,2)/(\Un(2)\times\Un(2)) \).
This solvable algebra may be identified with the Lie algebra of
matrices
\begin{equation*}
  \begin{pmatrix}
    x & u & v   & iw           \\
    0 & y & iz & -\overline v \\
    0 & 0 & -y  & -\overline u \\
    0 & 0 & 0   & - x
  \end{pmatrix},
  \qquad x,y,z,w \in \bR,\ u,v\in \bC,
\end{equation*}
when one realises \( \un(2,2) \) as matrices preserving \( \ii \) and
the inner product given by a matrix with non-zero entries \( 1 \) in
each anti-diagonal position \( (i,4-i) \).  Dually \( (\tilde a,\tilde
b,\varphi,\tilde \psi,\epsilon_1,\dots,\epsilon_4) \) correspond to \(
((y=1),(z=2),(w=1), (x=1),(\re u=1),(\im u=1),(\im v=1),(\re v=1)) \),
where for example \( (y=1) \) means the matrix with \( y=1 \) and all
other variables zero.

\subsection{The non-flat case}
\label{sec:non-flat-case}

In this case, we take \( \lambda = 2/\sqrt3 \).  The difference~\(
\eta \) between \( s^*\oN \) and \( s^*\oLC \) is described in the
proof of Proposition~\ref{prop:H2-lambda}.  Note that the local function~\( z \)
appearing there satisfies \( dz = 2(\varphi + \sqrt 3\, \tilde b) \),
so \( \varphi = \tfrac12dz - \sqrt 3\,\tilde b \).  But \( d\varphi =
2\tilde a\wedge\tilde b = -\sqrt 3d\tilde b \), so corresponding to
the flat case we write \( \tau = z/2 \).  This again has \( X\tau = 1
\).

If we write \( (\hat A,\hat B,\hat \Phi,\hat\Psi) = s^*\alpha = dx -
xs^*\oN = dx - xs^*\oLC - x\eta \), the description of \( g_H \) and
\( g_N \) is as in the flat case, and the twist data is the same.
Putting \( \gamma = (\tilde a,\tilde b,\varphi,\tilde\psi) \), we have
\begin{equation}
  \label{eq:dWa2-psi}
  \begin{gathered}
    d_W\tilde a = d\tilde a = 0,\quad
    d_W\tilde b = d\tilde b = - \frac2{\sqrt3}\tilde a\wedge\tilde b,\\
    d_W\varphi = 2(\varphi \wedge\tilde\psi - \tilde A\wedge\tilde B +
    \tilde\Phi\wedge\tilde\Psi),\quad
    d_W\tilde\psi = d(dt/t) = 0.
  \end{gathered}
\end{equation}
Once again \( \tilde\delta \coloneqq (s^*\alpha)/t = (\tilde A,\tilde
B,\tilde\Phi,\tilde\Psi) \), has \( L_{\hX} \tilde\delta =
(L_{\hX}s^*\alpha) /t = \hX \hook ds^*\alpha / t =
\tilde\delta(\hX\hook s^*\oLC) = -\tilde\delta\ii \), so we consider
\( \delta \coloneqq \tilde\delta e^{\ii \tau} \).
This satisfies
\begin{equation}
  \label{eq:dWdelta2}
  \begin{split}
    d_W\delta&= d(\frac1ts^*\alpha\, e^{\ii \tau}) \\
    &= -\frac 1{t^2}\hat\psi \wedge s^*\alpha\, e^{\ii \tau} -
    \frac1ts^*\alpha \wedge (s^*\oLC + \eta)\, e^{\ii \tau} -
    \frac1ts^*\alpha \wedge e^{\ii \tau}\ii d\tau \\
    &= - \tilde\psi\wedge\delta - \delta\wedge (s^*\oLC +
    e^{-\ii\tau}\eta e^{\ii \tau}) - \delta
    \wedge (\varphi + \sqrt 3\,\tilde b)\ii\\ 
    &= - \tilde\psi\wedge\delta - \delta\wedge (s^*\oLC +
    \eta e^{2\ii \tau}) - \delta
    \wedge (\varphi + \sqrt 3\,\tilde b)\ii\\ 
    &=\delta \wedge\left\{\tilde\psi 1_4 +
      \tilde a
      \begin{pmatrix}
        -2/\sqrt 3 & 0 & 0 & -1 \\
        0 & 2/\sqrt 3 & 1 & 0 \\
        0 & 1 & 0 & 0 \\
        -1 & 0 & 0 & 0
      \end{pmatrix}\vrule width 0pt height 7ex\right.
      \eqbreak[8]
      \left.\vrule width 0pt height 7ex
        + \tilde b
      \begin{pmatrix}
        0 & \sqrt 3 & -1 & 0 \\
        1/\sqrt 3 & 0 & 0 & -1 \\
        -1 & 0 & 0 & \sqrt 3 \\
        0 & -1 & -\sqrt 3 & 0
      \end{pmatrix}
    \right\}
    ,
  \end{split}
\end{equation}
which has constant coefficients with respect to the coframe \(
(\gamma,\delta) \).  As \( g_N \) is complete, we conclude that the
universal cover of the twist is a Lie group~\( G \), with
left-invariant quaternionic Kähler metric.

Let \( \zeta = \sqrt[4]3 \) and put \( \epsilon
= \frac1{\sqrt2}\bigl(\zeta(\delta_2\zeta+\delta_3/\zeta),\allowbreak
\frac1\zeta(\delta_1/\zeta-\delta_4\zeta),\allowbreak
\frac1\zeta(\delta_2/\zeta-\delta_3\zeta),\allowbreak
\zeta(\delta_1\zeta+\delta_4/\zeta)\bigr)
\).  We have
\begin{equation*}
  d_W\epsilon
  = \epsilon\wedge \left\{\tilde\psi 1_4+\frac1{\sqrt3}\tilde a
  \begin{pmatrix}
    3&0&0&0 \\
    0&1&0&0 \\
    0&0&-1&0 \\
    0&0&0&-3
  \end{pmatrix}
  +\frac2{\sqrt3}\tilde b
  \begin{pmatrix}
    0&0&0&0 \\
    3&0&0&0 \\
    0&2&0&0 \\
    0&0&1&0
  \end{pmatrix}
\right\}.
\end{equation*}
Moreover, \( d_W\varphi = 2\varphi\wedge\tilde\psi -
3\epsilon_2\wedge\epsilon_3 + \epsilon_1\wedge\epsilon_4 \).  The Lie
algebra is completely solvable with derived algebra \( \lie n \) of
codimension~\( 2 \), which is \( 4 \)-step nilpotent.  With \( a' =
\tilde a/\sqrt3 \) and \( b' = \frac2{\sqrt3}\tilde b \), we have \(
d_Wa' = 0 \), \( d_Wb' = 2b'\wedge a' \).  In this coframe \(
(a',b',\varphi,\tilde\psi,\epsilon_1,\dots,\epsilon_4) \) we see the
structure of solvable algebra associated to \( G_2^*/\SO(4) \): an
explicit matrix representation of this solvable algebra is provided by
Castrillón López, Gadea and Oubiña \cite{Castrillon-Lopez--GO:hqK8};
in their notation the dual to our coframe is \(
(A_2,U_3,X_3,2A_1+A_2,-X_2,U_1,U_2,X_1) \).

\section{Deformations and related geometries}
\label{sec:deform-relat-geom}

Our description of the c-map via the twist construction has the
advantage that there are obvious changes that can be made that still
yield quaternionic Kähler twists. 

\begin{asparaenum}
\item The most obvious is that a constant \( c \) can be subtracted
  from the~\( \mu \) given in \S \ref{sec:geometry-twist}, as in
  Theorem~\ref{thm:qK12}.  This change has been identified by
  Alexandrov et al.\ \cite{Alexandrov-DP:qK-hK} and Alekseevsky et
  al.\ \cite{Alekseevsky-CDM:qK-special} as the one-loop deformation
  of the Ferrara-Sabharwal metric, originally found by Cecotti,
  Ferrara and Girardello \cite{Cecotti-FG:II} and also studied by
  Robles Llana, Saueressig and Vandoren \cite{Robles--Llana-SV:loop}.
\item\label{it:tri-1} The rigid c-map produces hyperKähler metrics
  with many tri\hyphen holomorphic isometries.  If \( Y \) is one of
  these, we may consider twists by \( Z = \hX + \lambda Y \) for any
  real constant \( \lambda \).  Since \( Z \) satisfies \( L_Z\omega_J
  = \omega_K \) and is a Kähler isometry for \( (g,\omega_I) \),
  Theorem~\ref{thm:qK12} shows that the twists of \( H = T^*C \) by \(
  Z \) are still quaternionic Kähler.
\item\label{it:tri-2} If \( Y \) is a tri-Hamiltonian isometry acting
  freely and properly on \( H \) and commuting with \( \hX \), the one
  can create a new hyperKähler manifold with rotating circle action as
  follows.  Recall that the hyperKähler modification
  \cite{Dancer-S:mod} of \( H=T^*C \) by \( Y \) is \( H_\hmod = (H
  \times \bH) \hkq \bR \), the hyperKähler quotient of \( H\times \bH
  \) by the action \( (p,q) \mapsto (t\cdot p,e^{-it}q) \), where \( Y
  \) generates the action \( p \mapsto t\cdot p \).  On the product \(
  H \times \bH \) we have a rotating circle action given by the action
  of \( \hX \) on the first factor and that of \( q \mapsto
  e^{it/2}qe^{-it/2} \) on~\( \bH \).  This action descends to \(
  H_\hmod \) such that the hypotheses of Theorem~\ref{thm:qK12} are
  satisfied.  It is therefore possible to twist \( H_\hmod \) to
  produce quaternionic Kähler metrics.
\item\label{it:tri-3} The construction of item~\ref{it:tri-2} may be
  generalised further.  Firstly one may replace the factor \( \bH \)
  by any hyperKähler four-manifold that admits a tri-Hamiltonian
  action that commutes with a circle action rotation
  satisfying~\eqref{eq:rot-X}.  In general this is specified by \( S^1
  \)-invariant monopoles on open subsets \( U \) of \( \bR^3 \),
  cf.~Hitchin~\cite{Hitchin:Montreal}.  Such monopoles are specified
  by a harmonic function on~\( U \) that is required to be positive.
  Secondly, in \cite{Swann:twist-mod} it shown how such generalised
  modifications may be interpreted as twists via tri-holomorphic
  isometries, and there form part of a wider class of hyperKähler
  twists constructions, corresponding to a relaxation of the
  positivity condition on the harmonic function on \( U \subset \bR^3
  \).
\end{asparaenum}

In Alexandrov et al.\
\cite{Alexandrov-PSV:linear-qK,Alexandrov-PSV:linear-hK} various
deformations of hyperKähler and quaternionic Kähler metrics are
discussed.  It seems likely that some of the constructions above under
items~\ref{it:tri-1} to~\ref{it:tri-3} describe their deformations,
and that some simply provide isometric deformations of the
quaternionic Kähler metric.  However, at this stage the correspondence
between these constructions in not clear.  We believe
items~\ref{it:tri-1} to~\ref{it:tri-3} describe a wider class of
quaternionic Kähler metrics.


\begin{thebibliography}{10}

\bibitem{Alekseevsky:solvable}
D.~V. Alekseevsky, \emph{Classification of quaternionic spaces with a
  transitive solvable group of motions}, Math. USSR Isvestija \textbf{9}
  (1975), 297--339.

\bibitem{Alekseevsky-CDM:qK-special}
D.~V. Alekseevsky, V.~Cort\'{e}s, M.~Dyckmanns, and T.~Mohaupt,
  \emph{Quaternionic {K}\"{a}hler metrics associated with special {K}\"{a}hler
  manifolds}, May 2013, \url{arXiv:1305.3549 [math.DG]}.

\bibitem{Alekseevsky-CM:conification}
D.~V. Alekseevsky, V.~Cort\'{e}s, and T.~Mohaupt, \emph{Conification of
  {K}\"{a}hler and hyper-{K}\"{a}hler manifolds}, Commun. Math. Phys.
  \textbf{324} (2013), 637--655.

\bibitem{Alexandrov-DP:qK-hK}
S.~Alexandrov, D.~Persson, and B.~Pioline, \emph{Wall-crossing, {R}ogers
  dilogarithm, and the {QK/HK} correspondence}, J. High Energy Physics
  \textbf{2011} (2011), no.~12, 027, i, 64 pp. (electronic).

\bibitem{Alexandrov-PSV:linear-qK}
S.~Alexandrov, B.~Pioline, F.~Saueressig, and S.~Vandoren, \emph{Linear
  perturbations of quaternionic metrics}, Comm. Math. Phys. \textbf{296}
  (2010), no.~2, 353--403.

\bibitem{Alexandrov-PSV:linear-hK}
\bysame, \emph{Linear perturbations of hyperk\"{a}hler metrics},
  Lett. Math. Phys.  \textbf{87} (2009), no.~3, 225--265.

\bibitem{Bonan:qH1}
E.~Bonan, \emph{Sur l`alg\`{e}bre ext\'{e}rieure d'une vari\'{e}t\'{e} presque
  hermitienne quaternionique}, C.~R. Acad. Sci. Paris \textbf{295} (1982),
  115--118.

\bibitem{Castrillon-Lopez--GO:hqK8}
M.~Castrill\'{o}n~L\'{o}pez, P.~M. Gadea, and J.~A. Oubi\~{n}a,
  \emph{Homogeneous quaternionic {K}\"{a}hler structures on eight-dimensional
  non-compact quaternion-{K}\"{a}hler symmetric spaces}, Math. Phys. Anal.
  Geom. \textbf{12} (2009), no.~1, 47--74.

\bibitem{Cecotti-FG:II}
S.~Cecotti, S.~Ferrara, and L.~Girardello, \emph{Geometry of type {II}
  superstrings and the moduli of superconformal field theories}, Internat. J.
  Modern Phys. A \textbf{4} (1989), no.~10, 2475--2529.

\bibitem{Cortes:Alekseevskian}
V.~Cort\'{e}s, \emph{Alekseevskian spaces}, Differential Geom. Appl. \textbf{6}
  (1996), no.~2, 129--168.

\bibitem{Cortes-HX:supergravity}
V.~Cort\'{e}s, X.~Han, and T.~Mohaupt, \emph{Completeness in supergravity
  constructions}, Comm. Math. Phys. \textbf{311} (2012), no.~1, 191--213.

\bibitem{Dancer-S:mod}
A.~S. Dancer and A.~F. Swann, \emph{Modifying hyperkähler manifolds with
  circle symmetry},  \textbf{10} (2006), no.~4, 815--826.

\bibitem{DeWit-vP:special}
B.~de~Wit and A.~Van~Proeyen, \emph{Special geometry, cubic polynomials and
  homogeneous quaternionic spaces}, Comm. Math. Phys. \textbf{149} (1992),
  307--333.

\bibitem{Feix:cotangent}
B.~Feix, \emph{Hyperk\"{a}hler metrics on cotangent bundles}, J. Reine Angew.
  Math. \textbf{532} (2001), 33--46.

\bibitem{Ferrara-S:q}
S.~Ferrara and S.~Sabharwal, \emph{Quaternionic manifolds for type~{II}
  superstring vacua of {C}alabi-{Y}au spaces}, Nucl. Phys.~B \textbf{332}
  (1990), 317--332.

\bibitem{Freed:special}
D.~S. Freed, \emph{Special {K}\"{a}hler manifolds}, Commun. Math. Phys.
  \textbf{203} (1999), 31--52.

\bibitem{Gray:Sp}
A.~Gray, \emph{A note on manifolds whose holonomy group is a subgroup of
  {$Sp(n)\cdot Sp(1)$}}, Michigan Math.~J. \textbf{16} (1969), 125--128,
  Errata: \textbf{17} (1970), 409.

\bibitem{Haydys:hKS1}
A.~Haydys, \emph{Hyper{K}\"{a}hler and quaternionic {K}\"{a}hler manifolds with
  {$S\sp 1$}\hyphen symmetries}, J. Geom. Phys. \textbf{58} (2008), no.~3,
  293--306.

\bibitem{Hitchin:Montreal}
N.~J. Hitchin, \emph{Monopoles, minimal surfaces and algebraic curves}, Les
  presses de l'Universit\'{e} de Montr\'{e}al, 1987.

\bibitem{Hitchin:quaternionic}
\bysame, \emph{Quaternionic {K}\"{a}hler moduli spaces}, Riemannian topology
  and geometric structures on manifolds, Progr. Math., vol. 271, Birkh\"{a}user
  Boston, 2009, pp.~49--61.

\bibitem{Hitchin:hyperholomorphic}
\bysame, \emph{The hyperholomorphic line bundle}, June 2013,
  \url{arXiv:1306.4241 [math.DG]}, to appear in the Proceedings of the
  Conference on Algebraic and Complex Geometry, Hanover, September 10-14 2012.

\bibitem{Hitchin:hK-qK}
\bysame, \emph{On the hyperk\"{a}hler/quaternion {K}\"{a}hler correspondence},
  Commun. Math. Phys. \textbf{324} (2013), no.~1, 77--106.

\bibitem{Kaledin:cotangent}
D.~Kaledin, \emph{A canonical hyper{K}\"{a}hler metric on the total space of a
  cotangent bundle}, Quaternionic structures in mathematics and physics (Rome,
  1999), World Sci. Publishing, 2001, pp.~195--230.

\bibitem{Lledo-MvPV:special}
M.~A. Lled\'{o}, \'{O}. Maci\'{a}, A.~Van~Proeyen, and V.~S. Varadarajan,
  \emph{Special geometry for arbitrary signatures}, Handbook of
  pseudo-Riemannian geometry and supersymmetry, IRMA Lect. Math. Theor. Phys.,
  vol.~16, Eur. Math. Soc., 2010, pp.~85--147.

\bibitem{Robles--Llana-SV:loop}
D.~Robles~Llana, F.~Saueressig, and S.~Vandoren, \emph{String loop corrected
  hypermultiplet moduli spaces}, J. High Energy Phys. \textbf{2006} (2006),
  no.~3, 081, 35 pp. (electronic).

\bibitem{Swann:symplectiques}
A.~F. Swann, \emph{Aspects symplectiques de la g\'{e}om\'{e}trie
  quaternionique}, C.~R. Acad. Sci. Paris \textbf{308} (1989), 225--228.

\bibitem{Swann:MathAnn}
\bysame, \emph{Hyper{K}\"{a}hler and quaternionic {K}\"{a}hler geometry}, Math.
  Ann. \textbf{289} (1991), 421--450.

\bibitem{Swann:T}
\bysame, \emph{T is for twist}, Proceedings of the XV International Workshop on
  Geometry and Physics, Puerto de la Cruz, September 11--16, 2006
  (D.~Iglesias~Ponte, J.~C. Marrero~Gonz\'{a}lez, F.~Mart\'{\i}n~Cabrera,
  E.~Padr\'{o}n~Fern\'{a}ndez, and Sosa Mart\'{\i}n, eds.), Publicaciones de la
  Real Sociedad Matem\'{a}tica Espa\~{n}ola, vol.~11, Spanish Royal
  Mathematical Society, 2007, pp.~83--94.

\bibitem{Swann:twist}
\bysame, \emph{Twisting {H}ermitian and hypercomplex geometries}, Duke Math. J.
  \textbf{155} (2010), no.~2, 403--431.

\bibitem{Swann:twist-mod}
\bysame, \emph{Twists versus modifications}, in preparation, 2014.

\end{thebibliography}
\providecommand{\bysame}{\leavevmode\hbox to3em{\hrulefill}\thinspace}
\providecommand{\MR}{\relax\ifhmode\unskip\space\fi MR }
\providecommand{\MRhref}[2]{%
  \href{http://www.ams.org/mathscinet-getitem?mr=#1}{#2}
}
\providecommand{\href}[2]{#2}

\bigskip
{\small
  \setlength{\parindent}{0pt} Oscar Macia

  Departamento de Geometria y Topologia, Facultad de Ciencias
  Matematicas, Universidad de Valencia, C.\ Dr Moliner, 50, (46100)
  Burjassot (Valencia), Spain

 \textit{E-mail}: \url{oscar.macia@uv.es}

  \smallskip Andrew Swann

  Department of Mathematics, Aarhus University, Ny Munkegade 118, Bldg
  1530, DK-8000 Aarhus C, Denmark.

  \textit{E-mail}: \url{swann@imf.au.dk}
  \par}

\end{document}